\def\CC{\mathbb{ C}}
\def\RR{\mathbb{ R}}
\newtheorem{theorem}{Theorem}[chapter]
\newtheorem{proposition}[theorem]{Proposition}
\newtheorem{corollary}[theorem]{Corollary}
\newtheorem{lemma}[theorem]{Lemma}
\newtheorem{remark}{Remark}
\newtheorem{definition}{Definition}[chapter]
\def\al{\alpha}
\def\be{\beta}
\def\de{\delta}  
    \def\si{\sigma}
   \def\la{\lambda}
\def\Om{\Omega}
\newcommand{\ii}{\'{\i}}
\newcommand{\R}{{\mathbb{R}}}
\newcommand{\CP}{{\mathbb{CP}}}
\newcommand{\ie}{{\it{i.e.}}: }
\let\originalleft\left
\let\originalright\right
\renewcommand{\left}{\mathopen{}\mathclose\bgroup\originalleft}
\renewcommand{\right}{\aftergroup\egroup\originalright}
\def\ie{{\em i.e.,\ }}
\newfont\bbf{msbm10 at 12pt}
\def\eps{\varepsilon}
\def\R{{\mathbb R}}
\def\N{{\mathbb N}}
\def\K{{\mathbb K}}
\def\supp{\mbox{\rm supp}}
\def\le{\leqslant}
\def\ge{\geqslant}
\newdimen\AAdi%
\newbox\AAbo%
\def\AArm{\fam0 }
\def\AAk#1#2{\setbox\AAbo=\hbox{#2}\AAdi=\wd\AAbo\kern#1\AAdi{}}%
\def\AAr#1#2#3{\setbox\AAbo=\hbox{#2}\AAdi=\ht\AAbo\raise#1\AAdi\hbox{#3}}%
\def\BBone{{\AArm 1\AAk{-.8}{I}I}}%
\newcommand {\CA}{{\mathcal A}}
\newcommand {\CB}{{\mathcal B}}
\renewcommand {\CC}{{\mathcal C}}
\newcommand {\CH}{{\mathcal H}}
\newcommand {\CL}{{\mathcal L}}
\newcommand {\CM}{{\mathcal M}}
\newcommand {\CO}{{\mathcal O}}
\renewcommand {\CP}{{\mathcal P}}
\newcommand{\disp}{\displaystyle}
\newcommand{\8}{\infty}
\def\m1{{-1}}
\newcommand{\ninf}{{n\rightarrow\8}}
\def\S{\Sigma}
\def\s{\sigma}
\def\l{\lambda}
\def\supp{\mbox{supp}\,}
\newcommand{\wt}{\widetilde}
\def\be{\beta}
\def\al{\alpha}
\def\de{\delta}
\def\Om{\Omega}
\def\1{{1^{\8}}}
\def\2{{2^{\8}}}
\def\cpb{\CP(\be)}
\def\binf{{\be\to+\8}}
\newcommand{\Eg}{\vskip 0.2cm \noindent{\bf Example.}\hskip 0.2cm}
\newcommand{\Egs}{\vskip 0.2cm \noindent{\bf Examples}\\ \noindent}
\newcounter{exo}
\newcounter{qe}
\newcounter{eg}
\newcommand{\exo}{\stepcounter{exo} \setcounter{qe}{0}\vskip 0.5cm
\noindent {\bf Exercise \arabic{exo}} \\ \noindent}
\newcommand{\exobis}{\stepcounter{exo} \setcounter{qe}{0}\vskip
-0.2cm\noindent {\bf Exercise \arabic{exo}} \\ \noindent}
\title{Ergodic optimization, zero temperature limits and the Max-Plus algebra\\
$29^{\underline{\text{o}}}$\, Coloquio Brasileiro de Matematica, IMPA,  2013}
\author{A. T. Baraviera,  \; \; \;R. Leplaideur\,   \;  \; and \; \;  A. O. Lopes\, \\
Inst.  Mat. - UFRGS\, and Dept. Math. - Univ. de Brest}
\begin{document}
\synctex=1

\maketitle

 \frontmatter

  \tableofcontents
\mainmatter




\chapter{Some preliminaires}\label{chap-preli}

The purpose of this book is to present for math students some of the main ideas and issues  which
are considered in Ergodic Optimization. This text is also helpful for a mathematician with no experience in the area.
Our focus here are the questions which are associated to selection of probabilities when temperature goes to zero.
We believe that concrete examples can be very useful for any person which is reading this topic for the first time. We care about this point.
The reader will realize that the use of the so called  Max-Plus Algebra resulted in a very helpful tool for computing  explicit solutions for the kind of problems we are interested here.
We are not concerned in our text in presenting results in their more general form nor point out who were the person that did this or that first.
By the other hand, in the bibliography, we try to mention all results which appear in the literature. The reader  can look there, and
find other references which complement our short exposition. We are sorry for the case we eventually do not mention some paper in the area.
Simply we were not aware of the work.
We believe there is a need for a kind of text like ours. The purpose is to present some of the basic ideas of this beautiful theory for a more broad audience.We would like to thank several colleagues who read drafts of this book and made several suggestions for improvement.

\smallskip

\centerline{\,\,\,\,\,\,\,\,\,\,\,\,\,\,\,\,\,\,\,\,\,\,\,\,\,A. T. Baraviera, R. Leplaideur and A. O. Lopes}

\smallskip

\centerline{\,\,\,\,\,\,\,\,\,\,\,\,\,\,\,\,\,\,\,\,\,\,\,\,\,Rio de Janeiro, may 7, 2013.}

 \section{The configuration's space}\label{sec-space}

 \subsection{Topological properties}

We consider the space $ \Omega= \{1,2,...,k\}^{ \mathbb{N}}$ where the elements are sequences $x=(x_0,x_1,x_2,x_3...)$, in which $x_i\in \{1,2,...,k\}, i\in \mathbb{N}$. An element in $\Omega$ will also be called an \emph{infinite word} over the alphabet $\{1,\ldots, k\}$, and $x_{i}$ will be called a {\em digit} or a {\em symbol}.

The distance between two points $x=x_{0},x_{1},\ldots$ and $y=y_{0},y_{1},\ldots$ is given by
$$d(x,y)=\frac1{2^{\min\{n,\ x_{n}\neq y_{n}\}}}.$$

\Eg
In the case $k=4$, $d(1,2,1,3,4..),(1,2,1,2,3,..))=\frac{1}{2^3}$.

\medskip
We can represent this distance graphically as shown in figure 1.1.

\begin{figure}[H]
\unitlength=6mm
\begin{picture}(12,4.5)(-2,0)
\put(-1,1.5){\small$x_{0}=y_{0}$}
\put(0,2){\line(1,0){5}}
\put(5,2){\line(1,1){1}} \put(5,2){\line(1,-1){1}}
\put(5,1){\dashbox{0.2}(0.1,3)}\put(4.5,4.3){\small $n-1$}
\put(4,0){\small $x_{n-1}=y_{n-1}$}
\put(6,3){\line(1,0){3}}
\put(9.5,3){\small $y$}
\put(6,1){\line(1,0){3}}
  \put(9.5,1){\small $x$}
\end{picture}
\caption{The sequence $x$ and $y$ coincide from the digit $0$ up to the digit $n-1$, and then split.}\label{fig:distance}
\end{figure}
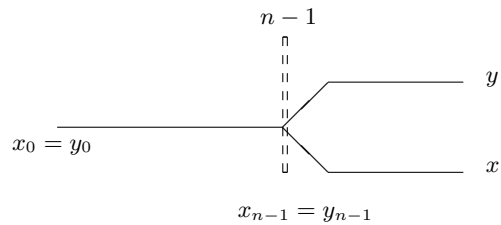

A finite string of symbols $x_{0}\ldots x_{n-1}$ is also called a \emph{word}, of length $n$. For a word $w$, its length is denoted by $|w|$. A \emph{cylinder} (of length $n$) is denoted by $[x_{0}\ldots x_{n-1}] \subset \Omega$. It is the set of points $y$ such that $y_{i}=x_{i}$ for $i=0,\ldots n-1$. For example, the cylinder $[2,1,1]$ is the set of $y=(y_0,y_1,y_2,y_3,y_4,...)$ such that $y_0=2,y_1=1,y_2=1.$

Note that cylinders of length $n$ form a partition of $\Omega$. Given $x \in \Omega$, then $C_{n}(x)$ will denote the unique element of this partition which contains $x$. That is, $C_{n}(x)=[x_{0}\ldots x_{n}]$, when  $x=x_0x_1x_2x_3\ldots$.
The cylinder $C_{n}(x)$ also coincides with the ball $B(x,\frac1{2^{n}})$. The set of cylinders of length $n$ will be denoted by $\CC_{n}(\Omega)$.

\medskip
If $\omega=\omega_{0}\dots\omega_{n-1}$ is a finite word of length $n$ and $\omega'=\omega'_{0}\omega'_{1}\ldots$ is a word (of any length possibly infinite), then $\omega\omega'$ is the word
$$\omega_{0}\ldots\omega_{n-1}\omega'_{0}\omega'_{1}\ldots$$
It is called the {\em concatenation} of $\omega$ and $\omega'$.

\medskip
The set $(\Omega,d)$ is a compact metric space. Compactness also follows from the fact that $\Omega$ is a product of compact spaces. Note that the topology induced by the distance $d$ coincides with the product topology.
The cylinders are clopen (closed and open) sets and they generate the topology.

\subsubsection{The general subshifts of finite type}

We will need to consider later the general subshift of finite type. A good reference for symbolic dynamics is \cite{lind-marcus}.

\begin{definition}
\label{def-transimatrix}
A {\em transition matrix} is a $d\times d$ matrix with entries in $\{0,1\}$.
\end{definition}
If $T=(T_{ij})$ is a  $d\times d$ transition matrix, the subshift of finite type $\S_{T}$ associated to $T$ is the set of sequences $x=x_{0}x_{1}x_{2}\ldots x_{n}\ldots$ such that for every $j$,
$$T_{x_{j}x_{j+1}}=1.$$
It is exactly the set of infinite words such that a subword $ij$ appears only if $T_{ij}=1$. Equivalently, it is the set of words such that the subword $ij$ never appears if $T_{ij}=0$.

\Eg
The full shift $\Om=\{1,\ldots d\}^{\N}$ is the subshift  of finite type associated to the $d\times d$ matrix with all entries equal to 1.

The best way to understand what is the subshift of finite associated to  a matrix $T$ is to consider paths: for a given $i$, the set of $j$ such that  $T_{ij}=1$, is the set of letters $j$ authorized to follow $i$. Then, $\S_{T}$ is the set of infinite words we can write respecting theses rules, or equivalently, the set of infinite paths that we can do.

\Eg
The transition matrix is $\disp T=\left(\begin{array}{cccc}1 & 0 & 1 & 0 \\1 & 1 & 0 & 1 \\1 & 1 & 1 & 1 \\0 & 1 & 1 & 0\end{array}\right)$.

 When there are more 1's than 0's in $T$ it is simpler to describe $\S_{T}$ (there are less restrictions). In the present case we get  the set of infinite words $x$ with letters in the alphabet 1, 2, 3 and 4, such that, $12$, $14$, $23$, $41$ and $44$, never appear in each sequence $x=x_0x_1x_2x_3\ldots$.

\begin{definition}
\label{def-reladigitshift}
Let $T$ be a $d\times d$ transition matrix, and $\S_{T}$ be the associated subshift of finite type.
Two digits $i$ and $j$ are said to be {\em associated}, if there exists a path from $i$ to $j$, and, another from $j$ to $i$. We set $i\sim j$.
\end{definition}
Equivalently, $i\sim j$ means that there exists a word in $\S_{T}$ of the form
$$i\ldots j\ldots i.$$
Obviously $\sim$ is an equivalence relation, and this defines  classes of equivalence. It can happen that there are equivalence classes strictly contained in $\{1,\ldots d\}$.

\exo Find examples of transition matrices such that one digit is associated only to itself and not to any other digit.

\begin{definition}
\label{def-irreducshift}
The subsets of words in $\S_T$ such that all their digits belong to the same
equivalence classes of $\sim$ are called the irreducible components of $\S_{T}$.
\end{definition}

\subsection{Dynamics}

\begin{definition}
\label{def-shift}
The shift $\sigma:\Omega\to \Omega$, is defined by
$$\sigma(x_0x_1x_2x_3\ldots)= x_1x_2x_3\ldots.$$
\end{definition}

The shift expands distance by a factor 2:
$$d( \sigma(x), \sigma(y)) = 2\,d(x,y).$$
Hence, it is Lipschitz and thus continuous.

\begin{definition} Given $x\in \Omega$ the set $\{\sigma^n(x), n \geq 0\}$ is called the orbit of $x$. It is denoted by $\CO(x)$.
\end{definition}

The main goal in Dynamical Systems is to describe orbits and their behaviors.
Let us first present the simplest of all.

\begin{definition} A point $x\in \Omega$ is said to be periodic if there exists $k>0$, such that, $\s^{k}(x)=x$. In that case the period of $x$ is the smaller positive integer $k$ with this property.

A periodic point of period 1 is called a fixed point.
\end{definition}

\Egs
$111\ldots$ is a fixed point. \\
In $\{1,2,3,4\}^{\N}$, $x:=1323132313231323\ldots$ has period $4$ and the four points
$$13231323\ldots,\quad 32313231\ldots, \quad 23132313\ldots, \quad 31323132\ldots $$
form the orbit of $x$.

\medskip
A $n$-periodic point is entirely determined by its first $n$-digits; actually it is the infinite concatenation of these first digits:
$$x=\underbrace{x_{0}\ldots x_{n-1}}\underbrace{\ x_{0}\ldots x_{n-1}}\underbrace{\ x_{0}\ldots x_{n-1}}\ldots.$$
As a notation we shall set $x=(x_{0}\ldots x_{n-1})^{\8}$.

\bigskip

\begin{definition} Given a point $x$ in $\Omega$, a point $y\in \Omega$, such that, $\sigma(y)=x$ is called a {\em preimage of $x$}.

A point $y$ such that $\sigma^n(y)=x$ is called a {\em $n$-preimage of $x$}. The set
$\disp\{y\,|\ \text{there exists an }\,\, n \, \, \text{such that}\,\,\sigma^n(y)=x\,\}$
is called the {\em preimage set of $x$}.
\end{definition}

For the case of the full shift in $\Omega=\{1,\ldots k \}^{\N}$, each point $x$ has exactly $k$-preimages. They are obtained by the concatenation process
$ix$, with $i=1,\ldots , k$. The set of 1-preimage is $\s^{-1}(\{x\})$. The set of $n$-preimages is $(\s^{n})^{-1}(\{x\})$ which is simply denoted by $\s^{-n}(\{x\})$.

\begin{definition}
\label{def-s-invset}
A Borel set $A$ is said to be $\s$-invariant if it satisfies one of the following equivalent properties:
\begin{enumerate}
\item For any $x\in A$, $\s(x)$ belongs to $A$.
\item $\s^{-1}(A)\supset A$.
\end{enumerate}
\end{definition}
\Eg If $x$ is periodic, $\CO(x)$ is $\s$-invariant.
\exo
Show equivalence of both properties mentioned  in Definition \ref{def-s-invset}.

\exo
If $x$ is periodic, do we have $\s^{-1}(\CO(x))=\CO(x)$ ?

\subsubsection{Back to  the general subshifts of finite type}
We have defined above the general subshift of finite type. For such a subshift, we have also defined the irreducible components.
Here, we give a better description of these components with respect to the dynamics.
\begin{definition}[and proposition\footnote{We do not prove the proposition part.}]
\label{def-irreducible}
A  $\s$-invariant compact set $\K$ is called transitive if it satisfies one of the two equivalent properties:
\begin{itemize}
\item[(i)] For every pair of open sets of $\K$, $U$  and $V$, there exists $n>0$, such that, $\s^{-n}(U)\cap V\neq\emptyset$.
\item[(ii)] There exists a dense orbit.
\end{itemize}
\end{definition}
\exo
Show that if $\K$ is transitive, then the set of points in $\K$ with dense orbit is a $G_{\delta}$-dense set.

\medskip
\noindent
We claim that irreducible components are also transitive components. Indeed, any open set contains a cylinder and it is thus sufficient to prove $(i)$ with cylinders.
Now, considering two cylinders of the form $[x_{0}\ldots x_{k}]$ and $[y_{0}\ldots y_{n}]$, the relation defining irreducible components shows that there exists a connection
$$x_{0}\ldots x_{k}z_{1}\ldots z_{m}y_{0}\ldots y_{n},$$
remaining in $\K$.

\subsection{Measures}

We refer the reader to \cite{Bartle} and \cite{Fer} for general results in measure theory.

We denote by $\mathcal{B}$ the Borel $\s$-algebra over $\Omega$, that is, the one generated by the open sets.

We will only consider signed measures (probabilities) $\mu$ on $\Omega$ over this sigma-algebra $\mathcal{B}$, which we call Borel signed measures (probabilities). Due to the fact that cylinders are open sets and generate the topology, they also generate the $\s$-algebra $\CB$. Therefore, the values $\mu(C_{n})$, where $C_{n}$ runs over all the cylinders of length $n$, and $n$ runs over $\N$, determine uniquely $\mu$.

We remind the relation between Borel measures and continuous functions:
\begin{theorem}[Riesz]
\label{th-riesz}
The set of Borel signed measures is the dual of the set $\CC^{0}(\Omega)$.
\end{theorem}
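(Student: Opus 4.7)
The plan is to prove the two inclusions separately, exploiting the special structure of $\Omega$ (cylinders are clopen, $\Omega$ is compact and totally disconnected).

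First, the easy direction: given a Borel signed measure $\mu$, the map $L_{\mu}:f\mapsto \int f\,d\mu$ is a linear functional on $\CC^{0}(\Omega)$, and the bound $|L_{\mu}(f)|\le \|f\|_{\8}|\mu|(\Omega)$ (where $|\mu|=\mu^{+}+\mu^{-}$ is the total variation coming from the Jordan decomposition) shows it is continuous. Moreover $\mu\mapsto L_{\mu}$ is injective: since cylinders generate $\CB$ and $\mathbf{1}_{[x_{0}\ldots x_{n-1}]}$ is continuous (cylinders are clopen), two measures inducing the same functional agree on all cylinders, hence are equal.

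For the harder converse direction I would build $\mu$ from $L\in (\CC^{0}(\Omega))^{*}$ in four steps. First, decompose $L=L^{+}-L^{-}$ where $L^{\pm}$ are positive linear functionals, via the standard lattice definition $L^{+}(f):=\sup\{L(g):0\le g\le f\}$ for $f\ge 0$, and extend by linearity. This reduces the problem to the case where $L$ is positive. Second, using that cylinders are clopen, the finite disjoint unions of cylinders form a Boolean algebra $\CA$ generating $\CB$; define the premeasure
\[
\mu(C_{1}\sqcup\cdots\sqcup C_{m}):=\sum_{i=1}^{m}L(\mathbf{1}_{C_{i}}).
\]
Finite additivity is immediate from linearity of $L$, and positivity of $L$ gives $\mu\ge 0$.

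Third, I would verify $\sigma$-additivity on $\CA$, which by Caratheodory's theorem will then yield a unique Borel extension. Since $\Omega$ is compact and every element of $\CA$ is clopen, hence compact, any decreasing sequence $A_{n}\in\CA$ with $\bigcap_{n} A_{n}=\es$ must satisfy $A_{n}=\es$ for $n$ large enough; from this one deduces countable additivity of $\mu$ on $\CA$ in the standard way. Fourth, I would check that the Borel extension $\mu$ satisfies $L(f)=\int f\,d\mu$ for every $f\in\CC^{0}(\Omega)$. This is the place where compactness of $\Omega$ plays a crucial role: since $f$ is uniformly continuous, given $\vep>0$ one can find $n$ such that $f$ varies by less than $\vep$ on each cylinder of length $n$; writing the piecewise constant approximation $f_{n}:=\sum_{C\in\CC_{n}(\Omega)}f(x_{C})\mathbf{1}_{C}$ (choosing an arbitrary $x_{C}\in C$) we have $\|f-f_{n}\|_{\8}<\vep$, and by construction $L(f_{n})=\int f_{n}\,d\mu$. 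Passing to the limit $\vep\to 0$ (using continuity of both $L$ and integration against $\mu$) gives $L(f)=\int f\,d\mu$.

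The main obstacle is the $\sigma$-additivity step and, more subtly, making the positive/negative decomposition of $L$ rigorous enough to apply the construction componentwise. Both difficulties dissolve thanks to the very rigid structure of $\Omega$: the fact that cylinders are simultaneously closed, open, and compact allows one to replace delicate regularity arguments from the classical Riesz--Markov theorem by elementary compactness arguments, and the Stone--Weierstrass-type density of locally constant functions in $\CC^{0}(\Omega)$ reduces the identification of the functional to checking it on indicator functions of cylinders.
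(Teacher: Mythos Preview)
The paper does not actually prove this theorem: it is stated as a classical fact from measure theory, with the reader referred to \cite{Bartle} and \cite{Fer}, and is then used as a black box (for instance to define the dual operator $\CL_{A}^{*}$). So there is no proof in the paper to compare your proposal against.

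That said, your sketch is a correct and rather clean proof of the Riesz representation theorem in this particular setting, and it is worth noting that it genuinely exploits the structure of $\Omega$ in a way the general Riesz--Markov proof cannot. The two places where the total disconnectedness pays off are exactly the ones you identify: $\sigma$-additivity on the algebra of clopen sets follows from the trivial compactness argument (a decreasing sequence of nonempty compact sets has nonempty intersection), and the identification $L(f)=\int f\,d\mu$ reduces to locally constant functions because these are uniformly dense in $\CC^{0}(\Omega)$. The Jordan-type decomposition $L=L^{+}-L^{-}$ via $L^{+}(f):=\sup\{L(g):0\le g\le f\}$ for $f\ge 0$ is the standard Riesz-space argument and works on any $\CC^{0}(K)$ with $K$ compact; you are right that checking additivity of $L^{+}$ on the positive cone is the only place requiring a small verification, but it goes through without using anything special about $\Omega$. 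Your argument is thus more elementary than the general Riesz--Markov theorem (no inner/outer regularity machinery, no Urysohn lemma), at the price of being specific to zero-dimensional compact spaces.
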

\medskip

In other words, this  means that any linear bounded transformation $G:\CC^{0}(\Omega)\to \mathbb{R}$, is of the form
$$G:f\mapsto G(f)=\int f\, d \nu,$$
where $\nu$ is a fixed signed measure. The map  $G \to \nu$ is a bijection.
\medskip

Probabilities $\nu$ are characterized by the two properties: for any $f\geq 0$, we have that $G(f)\geq 0$, and $G(1)=1$.
\medskip

A subset $K$ of a linear space is convex, if, for any given $x,y\in K$, and any $\lambda,$ such that $0\leq \lambda\leq 1$, we have that
$\lambda\, x \,+\, (1-\lambda )\, y$ is in $K$.
\bigskip

\begin{corollary}
\label{coro-setproba}
The set of all probabilities on a compact space  is compact and convex for the weak*-topology.
\end{corollary}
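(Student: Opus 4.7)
The plan is to deduce both properties from Theorem \ref{th-riesz} (Riesz) together with the Banach--Alaoglu theorem applied to the dual of $\CC^{0}(\Omega)$. The characterization stated just after Riesz---$\nu$ is a probability if and only if the associated functional $G$ satisfies $G(f)\geq 0$ for all $f\geq 0$ and $G(1)=1$---will be the workhorse in both parts.

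I would dispose of convexity directly. Given probabilities $\mu_{1},\mu_{2}$ and $\lambda\in[0,1]$, the functional associated to $\mu:=\lambda\mu_{1}+(1-\lambda)\mu_{2}$ sends a nonnegative $f$ to $\lambda\int f\,d\mu_{1}+(1-\lambda)\int f\,d\mu_{2}\geq 0$ and sends the constant $1$ to $\lambda+(1-\lambda)=1$. By the characterization above, $\mu$ is again a probability, so the set of probabilities is convex.

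For compactness, my first step is to observe that every probability has norm exactly $1$ in $\CC^{0}(\Omega)^{*}$: positivity together with $|f|\leq \|f\|_{\infty}$ yields $|G(f)|\leq G(|f|)\leq \|f\|_{\infty}G(1)=\|f\|_{\infty}$, and equality is attained on $f\equiv 1$. Hence the set of probabilities sits inside the closed unit ball of $\CC^{0}(\Omega)^{*}$. Because $\Omega$ is compact metric, $\CC^{0}(\Omega)$ is separable, so by Banach--Alaoglu this unit ball is weak*-compact (and metrizable). It then remains to show that the set of probabilities is weak*-closed inside the ball.

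This closedness step is the one I expect to require the most care, but it is short: if $(\mu_{n})$ is a sequence of probabilities converging in the weak* sense to some $\nu\in\CC^{0}(\Omega)^{*}$, then for every continuous $f\geq 0$, $\int f\,d\nu=\lim_{n}\int f\,d\mu_{n}\geq 0$, and similarly $\int 1\,d\nu=\lim_{n}\int 1\,d\mu_{n}=1$. Invoking the characterization of probabilities once more, $\nu$ is itself a probability. Thus the set of probabilities is a closed subset of a compact set, hence compact, which completes the proof.
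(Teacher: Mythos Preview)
Your proof is correct and follows the standard route via Riesz and Banach--Alaoglu. Note that the paper states this corollary without proof, so there is no argument in the text to compare against; your write-up is precisely the kind of justification one would supply to fill that gap.
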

We remind that $\disp\mu_{n}\stackrel{\text{w*}}{\longrightarrow}\mu$ means  that for every continuous function $f:\Omega\to\R$,
$$\int f\,d\mu_{n}\to_{\ninf}\int f\,d\mu.$$
We point out that any indicator function  of a cylinder, $\BBone_{C_{n}}$, is continuous. We recall that the support of a (probability) measure $\mu$ is the set of points $x$ such that
$$\forall\,\eps,\ \mu(B(x,\eps))>0.$$
In our case, $x$ belongs to the support, if and only, if $\mu(C_{n}(x))>0$, for every $n$. The support is denoted by $\supp(\mu)$.

\exo
Show that $\supp(\mu)$ is compact.

\medskip
We recall that a probability over $\Omega$ is a measure $\mu$ such that $\mu(\Omega)=1$. In this book we shall only consider  probabilities. Moreover, most of the time, we shall be interested in  properties holding {\em almost everywhere}, that is, properties which are true for all points $x\in \Omega$, up to a set of probability zero.

\begin{definition}\label{def-probainv} We say that a probability $\mu$  is {\em invariant for the shift $\sigma$}, if for any $A\in \CB$,
$$\mu(\sigma^{-1}(A)) = \mu(A).$$
We will also say that $\mu$ is $\s$-invariant, or simply invariant (as $\s$ is the unique dynamics we shall consider).
\end{definition}

\medskip
To consider invariant measure means the following thing: if we see the action of $\N$ as a temporal action on the system, the systems is closed, in the sense that along the time, there is neither creation nor disappearance of mass in the system. In other words, the mass of a certain given set is constant along the time evolution.

\medskip

Using the vocabulary from Probability Theory, the study of invariant probabilities correspond to the study of Stationary Processes (see \cite{KT} \cite{LL}).

\exo
Show that if $\mu$ is invariant, $\supp\mu$ is invariant. Is it still the case if $\mu$ is not invariant ?

\medskip
In Ergodic Theory one is mainly interested in invariant probabilities, and the properties that are true  for points $x$ which are in a set which have mass equal to one.

\section{Invariant measures}
In this section we present some particular invariant measures in our setting and we will also present some more general results.

\subsection{Examples of invariant measures}\label{subsec-examp-meas}
\subsubsection{Periodic measures}
If $x$ is a point in $\Omega$, $\delta_{x}$ is the {\em Dirac} measure at $x$, that is
$$\delta_{x}(A)=
\begin{cases}
 1\text{ if }x\in A,\\
 0\text{ otherwise}.
\end{cases}
$$
Then, if $x$ is $n$-periodic,
$$\mu:=\frac1n\sum_{j=0}^{n-1}\delta_{\s^{j}(x)}$$
is $\s$-invariant.

\subsubsection{The Bernoulli product measure}
Let consider $\Omega=\{1,2\}^{\N}$, pick two positive numbers $p$ and $q$ such that $p+q=1$.  Consider the measure $\mathbb{P}$ on $\{1,2\}$ defined by
$$\mathbb{P}(\{1\})=p,\ \mathbb{P}(\{2\})=q.$$
Then, consider the measure $\mu:=\otimes\mathbb{P}$ on the product space $\{1,2\}^{\N}$. We remind that such $\mu$  is defined in such way that
$$\mu([x_{0}\ldots x_{n-1}])=p^{\#\text{ of 1's in the word }\,x}\,\,\,q^{\#\text{ of 2's in the word}\,x},$$
where $x$ is the finite word $x_{0}\ldots x_{n-1}.$

We claim that such $\mu$ is an invariant measure. Indeed, we have for any cylinder $[x_{0}\ldots x_{n-1}]$,
$$\s^{-1}([x_{0}\ldots x_{n}])=[1x_{0}\ldots x_{n}]\sqcup [2x_{0}\ldots x_{n}].$$
Then,
$$\disp \mu([1x_{0}\ldots x_{n}])=p\mu([1x_{0}\ldots x_{n}])$$ and
$$\disp \mu([2x_{0}\ldots x_{n}])=q\mu([1x_{0}\ldots x_{n}]).$$

In this way $\mu(\sigma^{-1}[x_{0}\ldots x_{n}])= \mu([x_{0}\ldots x_{n}]).$

\medskip
This example corresponds to the model of tossing a coin (head identified with $1$ and tail identified with $2$) in an independent way a certain number of times. We are assuming that each time we toss the coin the probability of head is $p$ and the probability of tail is $q$.

Therefore, $\mu ([211])$ describes the probability of getting tail in the first time and head in the two subsequent times we toss the coin, when we toss the coin three times.

\begin{remark}
\label{rem-uncountablemeas}
The previous example shows that there are uncountably many $\s$-invariant probabilities on $\{1,2\}^\mathbb{N}$.
$\blacksquare$\end{remark}

\subsubsection{Markov chain}
Let us start with some example.
Again, we consider the case $\Omega=\{1,2\}^{\N}$. Pick $p$ and $q$ two positive numbers in $]0,1[$, and set
$$P=\left(
\begin{array}{cc}
p & 1-p \\
1-q & q \\
\end{array}
\right)\,=\left(
\begin{array}{cc}
P(1,1) & P(1,2) \\
P(2,1) & P(2,2) \\
\end{array}
\right).$$
The first writing of $P$ shows that 1 is an eigenvalue. If we solve the equation
$$(x,y).P=(x,y),$$
we find a one-dimensional eigenspace (directed by a {\em left eigenvector}) with $y=\disp\frac{1-p}{1-q}x$. Therefore, there exists a unique left eigenvector $(\pi_{1},\pi_{2})$ such that
$$(\pi_{1},\pi_{2}).P=(\pi_{1},\pi_{2})\text{ and }\pi_{1}+\pi_{2}=1.$$
Note that $\pi_{1}$ and $\pi_{2}$ are both positive.

The measure $\mu$  is  then defined by
$$\mu([x_{0}\ldots x_{n}])=\pi_{x_{0}}P(x_{0},x_{1})P(x_{1},x_{2})\ldots P(x_{n-2},x_{n-1}).$$

A simple way to see the measure $\mu$ is the following: a  word $\omega=\omega_0\ldots\omega_{n-1}$ has to be seen as a path of length $n$, starting at state $\omega_0\in\{1,2\}$ and finishing at state $\omega_{n-1}$.
The measure $\mu([\omega])$ is the probability of this space among all the paths of length $n$. This probability is then given by the initial probability of being in state $\omega_0$ (given by $\pi_{\omega_0}$) and then probabilities of transitions from the state $\omega_{j}$ to $\omega_{j+1}$ (equal to $P(\omega_{j},\omega_{j+1})$), these events being independent.

\bigskip
A probability of this form is called the {\em Markov measure obtained from the $2\times2$ line stochastic matrix $P$ and  the initial vector of probability $\pi$}.

\exo
Show that the Bernoulli measure  constructed above is also a Markov measure.

\medskip

More generally we have:
\begin{definition} A $d\times d$ matrix $P$ such that all entries are positive and the sum of the elements in each line is equal to $1$ is called a {\bf line stochastic matrix}.
\end{definition}

One can show\footnote{Actually we will give a proof of that result in Theorem \ref{perron}.} that for a line stochastic matrix (with all entries strictly positive) there exist only one vector $\pi=(\pi_1,\pi_2,...,\pi_d)$, such that all $\pi_j>0$, $j\in \{1,2,..,d\}$, $\sum_{j=1}^d \pi_j=1,$ and
$$ \pi\, =\, \,\pi\,\,P.$$
$\pi$ is called the {\bf left invariant probability vector for the Markov Chain defined by $P$}.

\begin{definition}[and proposition] Given a  $d\times d$ line stochastic matrix $P$, and its left invariant probability vector $\pi= (\pi_1,\pi_2,...,\pi_d)$, we define, $\mu$ on $\Omega=\{1,2,\ldots,d\}^{\N}$,  in the following way: for any cylinder $[x_0x_1\ldots x_k]$
$$ \mu ([x_0,x_1\cdots x_k])\,=\pi_{x_0}\,P(x_0,x_1)\,P(x_1,x_2)\,P(x_2,x_3)\,...\,P(x_{k-1},x_k).$$
This measure $\mu$ {\bf is invariant for the shift} and it is called the {\em Markov measure associated to $P$ and $\pi$}. For a fixed $d$ we denote the set of Markov measures over $\Omega=\{1,2,..,d\}^\mathbb{N}$ by  $\overline{\mathcal{G}}$
\end{definition}

Note that for a Markov probability  any cylinder set has positive measure

\subsection{General results on invariant measures}
The definition of invariant measure involves Borel sets. The next result gives another characterization of invariant measures (see \cite{PY}):
\begin{proposition}\label{prop-meas-inv-fonc}
The measure $\mu$ is invariant, if and only if,  for any continuous function $f$ $$\int f(x)\, d\mu(x)=\int f (\sigma(x))\, d\mu(x).$$
\end{proposition}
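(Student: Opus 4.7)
The strategy is to recognize that the identity $\int f\,d\mu=\int f\circ\sigma\,d\mu$ is exactly the change of variables formula $\int f\,d\mu=\int f\,d(\sigma_{*}\mu)$, where $\sigma_{*}\mu$ denotes the push-forward measure defined by $\sigma_{*}\mu(A)=\mu(\sigma^{-1}(A))$. Thus the proposition amounts to saying that $\sigma_{*}\mu=\mu$ as Borel measures if and only if the two measures give the same integral to every continuous function. The Riesz representation theorem (Theorem \ref{th-riesz}) is therefore the main ingredient on one direction, while a standard approximation argument handles the other.

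For the direct implication, I would assume $\mu$ is invariant and first verify the identity $\int g\,d\mu=\int g\circ\sigma\,d\mu$ for indicator functions $g=\BBone_{A}$ with $A\in\CB$: this is just the definition of invariance applied to the set $A$, because $\BBone_{A}\circ\sigma=\BBone_{\sigma^{-1}(A)}$. By linearity this extends to simple Borel functions, and then by the monotone convergence theorem to all non-negative Borel functions, and finally to all bounded Borel functions by splitting into positive and negative parts. In particular the identity holds for every $f\in\CC^{0}(\Omega)$, since continuous functions on a compact space are bounded.

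For the converse, suppose $\int f\,d\mu=\int f\,d(\sigma_{*}\mu)$ holds for every $f\in\CC^{0}(\Omega)$. Then the two bounded linear functionals $f\mapsto\int f\,d\mu$ and $f\mapsto\int f\,d(\sigma_{*}\mu)$ on $\CC^{0}(\Omega)$ coincide, so by Theorem \ref{th-riesz} (uniqueness in the Riesz representation) we conclude $\sigma_{*}\mu=\mu$ as Borel measures, which is exactly the invariance condition $\mu(\sigma^{-1}(A))=\mu(A)$ for all $A\in\CB$. Alternatively, one can avoid invoking the full strength of Riesz: since cylinders are clopen, their indicators $\BBone_{C_{n}}$ are continuous, so the hypothesis directly yields $\mu(C_{n})=\mu(\sigma^{-1}(C_{n}))$; as cylinders generate $\CB$ and form a $\pi$-system, the monotone class theorem (or Dynkin's $\pi$--$\lambda$ theorem) extends this equality to all Borel sets.

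The only mildly delicate point is the converse direction: one must justify passing from an equality on continuous functions to an equality of measures on Borel sets. Either route above (Riesz uniqueness, or approximation via the clopen cylinder base together with a $\pi$-system argument) is straightforward in this symbolic setting, so I would present the clopen-cylinder approach since it uses only facts already established in the excerpt and avoids the nontrivial construction side of Riesz.
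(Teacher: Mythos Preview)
Your proof is correct and complete. Note, however, that the paper does not actually prove this proposition: it simply states the result and cites \cite{PY} for a proof. Your argument is the standard one and would serve well as the missing details; in particular, the clopen-cylinder route you favor for the converse fits naturally with the framework already set up in the paper (cylinders generate $\CB$, indicators of cylinders are continuous), so that choice is well suited to the context.
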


It is easy to see from the above that in the case  $\disp\mu_{n}\stackrel{\text{w*}}{\longrightarrow}\mu$, and each $\mu_{n}$ is invariant, then, $\mu$ is invariant.

\medskip
We denote by $\CM_\s$ the set of invariant probabilities on $\Omega$. It is a closed subset of probabilities for the weak*-topology, hence it is compact and convex.
\medskip

We say that a point $z$ in a convex compact set $C$ is extremal, if it is not a trivial convex combination of other elements in the set $C$. That is, we can not write $z$ as
$z=\lambda x + (1-\lambda) y$, where $0<\lambda<1$, and $x,y$ are in the convex set $C$.
\medskip

\begin{definition}
An extremal measure in $\CM_\s$ is called ergodic.
\end{definition}

This definition is however not useful and  clearly not easy to check. The next proposition gives other criteria for a measure to be ergodic.

\begin{proposition}\label{prop-crit-ergodi}
A probability $\mu$ is ergodic, if and only if, it satisfies one of the following properties:
\begin{enumerate}
\item Any invariant Borel set has full measure or zero measure.
\item For  any  continuous $f:\Omega\to\R$, if $f=f\circ \s$ $\mu$-a.e., then,  $f$ is constant.
\end{enumerate}
\end{proposition}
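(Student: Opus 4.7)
The plan is to establish the equivalences via the cycle $(0)\Leftrightarrow(1)$ and $(1)\Leftrightarrow(2)$, where $(0)$ stands for the defining property ``$\mu$ is extremal in $\CM_\s$''. Throughout I will use the fact, noted in the excerpt, that $\sigma$-invariance of a measure forces $\mu(\sigma^{-1}A\setminus A)=0$ whenever $A$ is a $\sigma$-invariant Borel set (so $\sigma^{-1}A=A$ modulo $\mu$-null sets), and the useful saturation: if $\mu(\sigma^{-1}A\triangle A)=0$ then $B:=\bigcap_{n\ge 0}\sigma^{-n}A$ satisfies $\sigma^{-1}B\supset B$ and $\mu(B)=\mu(A)$.

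For $(1)\Rightarrow(0)$, suppose $\mu=\lambda\mu_1+(1-\lambda)\mu_2$ with $\mu_1,\mu_2\in\CM_\s$ and $0<\lambda<1$. Then $\mu_1\ll\mu$, so let $\phi=d\mu_1/d\mu\in L^1(\mu)$. The invariance of $\mu_1$ and of $\mu$ together yield $\int_B\phi\,d\mu=\mu_1(B)=\mu_1(\sigma^{-1}B)=\int_B\phi\circ\sigma\,d\mu$ for every Borel $B$, so $\phi=\phi\circ\sigma$ $\mu$-a.e. Each level set $\{\phi\le c\}$ is therefore $\sigma$-invariant modulo null sets; saturating as above produces a genuine $\sigma$-invariant Borel set with the same measure. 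If $\mu_1\neq\mu$, some level has $\mu$-measure strictly between $0$ and $1$, contradicting $(1)$; hence $\mu_1=\mu=\mu_2$. Conversely, $(0)\Rightarrow(1)$: if some invariant Borel $A$ satisfies $\alpha:=\mu(A)\in(0,1)$, put $\mu_1=\mu|_A/\alpha$ and $\mu_2=\mu|_{A^c}/(1-\alpha)$; using $\sigma^{-1}A=A$ mod $\mu$, one checks $\mu_i(\sigma^{-1}B)=\mu_i(B)$, and then $\mu=\alpha\mu_1+(1-\alpha)\mu_2$ is a nontrivial convex combination with $\mu_1(A)=1\neq 0=\mu_2(A)$, so $\mu$ is not extremal.

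For $(1)\Rightarrow(2)$, let $f\in\CC^0(\Omega)$ with $f=f\circ\sigma$ $\mu$-a.e. For each $c\in\R$ the Borel set $A_c=f^{-1}((-\infty,c])$ satisfies $\sigma^{-1}A_c=A_c$ modulo a $\mu$-null set; applying the saturation trick and then $(1)$ yields $\mu(A_c)\in\{0,1\}$ for every $c$. Setting $c_0=\inf\{c:\mu(A_c)=1\}$ gives $f=c_0$ $\mu$-a.e. The remaining implication $(2)\Rightarrow(1)$ is the main obstacle, because $(2)$ only controls \emph{continuous} functions while $(1)$ speaks of arbitrary Borel sets. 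I would handle it by going through $(0)$: assuming $(2)$ and that $(1)$ fails, take the decomposition $\mu=\alpha\mu_1+(1-\alpha)\mu_2$ constructed in the preceding paragraph; by the Riesz theorem (Theorem~\ref{th-riesz}) there is $g\in\CC^0(\Omega)$ with $\int g\,d\mu_1\neq\int g\,d\mu_2$. Averaging $\bar g_N=\tfrac1N\sum_{k=0}^{N-1}g\circ\sigma^k$ and passing to the limit (Birkhoff, applied under $\mu_1$ and $\mu_2$) would give a $\sigma$-invariant \emph{measurable} function that is not $\mu$-a.e.\ constant, which is the honest obstruction to $(1)$; re-reading $(2)$ in that measurable sense (which is the standard formulation) closes the loop. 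If one insists on the continuous-function reading, the cleanest route is to note that $(0)\Leftrightarrow(1)$ has already been proven and to reduce $(2)\Rightarrow(1)$ to $(2)\Rightarrow(0)$ by exhibiting, from the failure of extremality, a continuous $f$ whose Birkhoff average is $\mu$-a.e.\ invariant, then using approximation by cylinder-indicator continuous functions (which are available since cylinders are clopen) to contradict~$(2)$.
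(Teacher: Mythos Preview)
The paper states this proposition without proof, so there is nothing to compare against; I will assess your argument on its own.

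Your proofs of $(0)\Leftrightarrow(1)$ and of $(1)\Rightarrow(2)$ are the standard ones and are correct. You are also right to flag $(2)\Rightarrow(1)$ as the delicate step. But the difficulty is worse than you indicate: with the hypothesis restricted to \emph{continuous} $f$, the implication $(2)\Rightarrow(1)$ is simply false on $\Omega=\{1,\ldots,d\}^{\N}$, so no amount of ``approximation by cylinder-indicator continuous functions'' will rescue it.

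Here is a counterexample. On $\{1,2\}^{\N}$ take two distinct Bernoulli measures $\nu_1,\nu_2$ with all weights positive (say parameters $(1/3,2/3)$ and $(2/3,1/3)$); both are ergodic and have full support. Set $\mu=\tfrac12\nu_1+\tfrac12\nu_2$. Then $\mu$ is $\sigma$-invariant, has full support, and is not extremal, so it fails $(0)$ and $(1)$. Nevertheless $\mu$ satisfies $(2)$: if $f$ is continuous and $f=f\circ\sigma$ $\mu$-a.e., the set $\{f\neq f\circ\sigma\}$ is open and $\mu$-null, hence empty because $\supp\mu=\Omega$; thus $f=f\circ\sigma$ everywhere, and since the full shift is topologically transitive, $f$ is constant. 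So $(2)$ holds while $(1)$ fails.

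The upshot is that your instinct---``re-reading $(2)$ in the measurable sense''---is not just a convenience but a necessity: the proposition is only correct if condition $(2)$ is stated for measurable (or $L^1$, or $L^2$) functions, which is the standard formulation. With that amended hypothesis your route through Birkhoff averages works cleanly: the limit $f^*=\lim_N\frac1N S_N(f)$ exists $\mu$-a.e., is $\sigma$-invariant, and satisfies $\int_A f^*\,d\mu=\int_A f\,d\mu$ for every invariant $A$, which produces a non-constant invariant measurable function whenever $(1)$ fails.
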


\exo
Show that a Markov measure is ergodic.

\bigskip
We can now state the main theorem in Ergodic Theory:
\begin{theorem}\label{birkhoff}
{\bf (Birkhoff Ergodic Theorem)}  Let $\mu$ be $\s$-invariant  and ergodic. Then, for every continuous function $f:\Omega \to \mathbb{R}$ there exist  a Borel set  $K$, such that $\mu(K)=1$, and for every $x$ in $K$
$$
\lim_{n\rightarrow +\infty}\frac{1}{n}\sum_{k=1}^n f(\sigma^{k-1}(x))=\int f\,d\mu.
$$
\end{theorem}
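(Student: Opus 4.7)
The plan is to follow the classical two-step strategy: first establish that the Birkhoff averages converge $\mu$-a.e.\ to some $\s$-invariant function $f^{*}$ (only using invariance of $\mu$), then invoke ergodicity to identify $f^{*}$ with the constant $\int f\,d\mu$.

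First I would introduce the Birkhoff sums $S_{n}f(x):=\sum_{k=0}^{n-1}f(\s^{k}(x))$ and averages $A_{n}f:=\frac{1}{n}S_{n}f$, and observe that since $\Omega$ is compact and $f$ continuous, $f$ is bounded. A short calculation gives $A_{n}(f\circ\s)(x)=A_{n}f(x)+\tfrac{1}{n}(f(\s^{n}x)-f(x))$, whence $\overline{f}(x):=\limsup_{n}A_{n}f(x)$ and $\underline{f}(x):=\liminf_{n}A_{n}f(x)$ are Borel measurable and $\s$-invariant ($\overline{f}\circ\s=\overline{f}$ and similarly for $\underline{f}$).

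The core technical step, and in my view the main obstacle, is the Maximal Ergodic Theorem: for any $g\in L^{1}(\mu)$, the set $E:=\{x:\sup_{n\ge 1}S_{n}g(x)>0\}$ satisfies $\int_{E}g\,d\mu\ge 0$. I would prove this via the Garsia-style argument, setting $M_{n}g(x):=\max(0,S_{1}g(x),\ldots,S_{n}g(x))$ and checking the pointwise inequality $S_{k+1}g(x)\le g(x)+M_{n}g(\s x)$ for $1\le k\le n$, which on $\{M_{n}g>0\}$ gives $M_{n}g(x)\le g(x)+M_{n}g(\s x)$. Integrating and using invariance of $\mu$ yields $\int_{\{M_{n}g>0\}}g\,d\mu\ge 0$, and then monotone convergence on $\{M_{n}g>0\}\nearrow E$ finishes the claim.

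Next, for fixed rationals $\alpha<\beta$, I would consider $E_{\alpha,\beta}:=\{x:\underline{f}(x)<\alpha<\beta<\overline{f}(x)\}$. This set is $\s$-invariant because $\overline{f}$ and $\underline{f}$ are. Applying the maximal theorem to $(f-\beta)\BBone_{E_{\alpha,\beta}}$ on one hand and $(\alpha-f)\BBone_{E_{\alpha,\beta}}$ on the other (noting that on $E_{\alpha,\beta}$ the partial averages of $f$ exceed $\beta$ and fall below $\alpha$ infinitely often) yields the two-sided bound
$$\beta\,\mu(E_{\alpha,\beta})\le \int_{E_{\alpha,\beta}}f\,d\mu\le \alpha\,\mu(E_{\alpha,\beta}),$$
forcing $\mu(E_{\alpha,\beta})=0$. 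Taking a countable union over rationals $\alpha<\beta$ gives $\overline{f}=\underline{f}$ $\mu$-a.e., so $A_{n}f(x)$ converges $\mu$-a.e.\ to a $\s$-invariant limit $f^{*}(x)$.

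Finally I would use the hypotheses on $\mu$. Since $f^{*}$ is $\s$-invariant and continuous functions approximate $f^{*}$ in $L^{1}$ (or directly invoking criterion (2) of Proposition \ref{prop-crit-ergodi} after noting $f^{*}$ is Borel invariant and can be handled via the invariant $\s$-algebra version of that criterion), ergodicity forces $f^{*}$ to be constant $\mu$-a.e. To identify the constant, $|A_{n}f|\le\|f\|_{\infty}$ allows dominated convergence, so
$$\int f^{*}\,d\mu=\lim_{\ninf}\int A_{n}f\,d\mu=\int f\,d\mu,$$
where the last equality uses $\s$-invariance of $\mu$ (Proposition \ref{prop-meas-inv-fonc}). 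Setting $K$ to be the $\mu$-full measure set on which $A_{n}f(x)\to\int f\,d\mu$ completes the proof. The delicate part is really the maximal inequality; everything else is a careful but routine consequence.
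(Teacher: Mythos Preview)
The paper does not actually prove Theorem~\ref{birkhoff}; it is stated as a classical result and followed only by an informal discussion of its meaning and an example. So there is no proof in the paper to compare against.

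Your argument is the standard Garsia--style proof and is essentially correct. One small point worth tightening: when you invoke ergodicity to conclude that $f^{*}$ is constant, note that criterion~(2) of Proposition~\ref{prop-crit-ergodi} is stated in the paper only for \emph{continuous} invariant functions, whereas your $f^{*}$ is merely Borel. You acknowledge this, but the cleanest fix is to bypass criterion~(2) entirely and use criterion~(1) directly: for each $c\in\mathbb{R}$ the level set $\{x:f^{*}(x)>c\}$ is a $\s$-invariant Borel set, hence has $\mu$-measure $0$ or $1$, which forces $f^{*}$ to be constant $\mu$-almost everywhere. With that adjustment the proof is complete.
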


The Birkhoff Theorem  says that, under the assumption of ergodicity, a time average is equal to a spatial average. Here is an example of application:

``Average cost of car ownership rises to \$8,946 per year.''

What does the term average mean ? One can imagine that we count how much a certain person spends every year for its car, and then do the average cost. This is a time-average. The main problem of this average is to know if it representative of the cost of anybody.

On the contrary, one can pick some region, then count how many people spend in 1 year for their car, and take the average amount. This is a spatial average. The main problem is to know if it represents how much each person is going to spend along the years (at beginning the car is new, and then get older !).

The ergodic assumption means that the repartition of old and new cars in the space is ``well'' distributed and/or  that the chosen person in the first way to compute the average cost is ``typical''. Then, the Birkhoff theorem says that both averages are equal.

\paragraph{Notation.} We set $S_{n}(f)(x):=f(x)+\ldots+f\circ \s^{n-1}(x)$.
\bigskip

\Eg
In a previous example we considered a Bernoulli measure modeling the tossing of a coin with probabilities $p$ of head and $q$ of tail. Consider the indicator function $\BBone_{[2]}$  of the cylinder $[2]$. For a fixed
$n$ and for a fixed $x=x_0x_1x_2\ldots$ the value $\sum_{k=1}^n \BBone_{[2]}(\sigma^{k-1}(x))$, counts the number of times we get
tail (or, the value $2$) in the finite string $x_0x_1x_2\ldots x_{n-1}$.

Note that $\int \BBone_{[2]}\, \,d \mu$ by definition is equal to $\mu([2])=q.$ One can show that the measure $\mu$ we get is ergodic.

Therefore, from Birkhoff Theorem, we can say that for $\mu$-almost every $x$, we have that
$$\lim_{n\rightarrow +\infty}\frac{1}{n}\sum_{k=1}^n  \BBone_{[2]}(\sigma^{k-1}(x))=\int \BBone_{[2]} \, d \mu=q.$$

The value $\frac{1}{n}\sum_{k=1}^n  \BBone_{[2]}(\sigma^{k-1}(x))$ is the empirical mean value of number of times we get tail if the sequence of events is obtained from flipping the coin $n$ times, which is  described by $x_0x_1x_2\ldots x_{n-1}$, where $x=(x_0,x_1,x_2,...)$.

\bigskip

\medskip
We finish this section with another application of ergodicity:
\begin{proposition}
\label{prop-return}
Let $\mu$ be an invariant ergodic probability. Let $x$ be a ``generic '' point in $\supp\mu$. Then, $x$ returns infinitely many times, as closed as wanted, to itself.
\end{proposition}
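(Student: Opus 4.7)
The plan is to interpret \emph{generic} as \emph{$\mu$-almost every $x\in\supp\mu$} and to deduce the statement directly from Birkhoff's theorem (Theorem \ref{birkhoff}) applied to indicator functions of cylinders. The key observation is that, by the description given earlier, $C_{n}(x)=B(x,1/2^{n})$, so showing that the orbit of $x$ returns infinitely often to every $C_{n}(x)$ is equivalent to showing that it returns $1/2^{n}$-close to $x$ infinitely often, for every $n$.

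First I would fix $n$ and enumerate the (finitely many) cylinders $C\in\CC_{n}(\Om)$. Since each $\BBone_{C}$ is continuous (cylinders are clopen), Birkhoff's theorem, applied with $\mu$ ergodic, provides a full $\mu$-measure set $K_{C}$ on which
$$\frac1N\sum_{k=1}^{N}\BBone_{C}(\s^{k-1}(x))\xrightarrow[N\to\8]{}\int\BBone_{C}\,d\mu=\mu(C).$$
For cylinders with $\mu(C)>0$ this forces $\s^{k}(x)\in C$ for infinitely many $k\ge 0$. Setting
$$K\;:=\;\bigcap_{n\ge 1}\bigcap_{\substack{C\in\CC_{n}(\Om)\\ \mu(C)>0}}K_{C},$$
we get a countable intersection of full measure sets, hence $\mu(K)=1$.

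Now I would take $x\in K\cap\supp\mu$; by definition of the support, $\mu(C_{n}(x))>0$ for every $n$, so $C_{n}(x)$ is one of the cylinders over which we took the intersection. Consequently there exist infinitely many integers $k$ with $\s^{k}(x)\in C_{n}(x)=B(x,1/2^{n})$, i.e.\ $d(\s^{k}(x),x)\le 1/2^{n}$. Since $n$ is arbitrary, a diagonal choice of return times yields a sequence $k_{j}\to\8$ with $\s^{k_{j}}(x)\to x$, which is the claim.

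The only subtlety I anticipate is the almost-everywhere nature of the statement: the conclusion holds on the full-measure set $K\cap\supp\mu$, not for every point of $\supp\mu$, which is why the statement reads ``generic''. A minor point to check is that $\mu(\supp\mu)=1$, so intersecting $K$ with $\supp\mu$ still gives a set of full measure; this is standard and follows from the fact that the complement of the support is a union of (countably many) cylinders of zero measure.
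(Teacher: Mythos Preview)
Your proof is correct and follows essentially the same approach as the paper: apply Birkhoff's theorem to the indicator of a small ball around $x$, use that $x\in\supp\mu$ to get positive measure, and conclude infinitely many returns. The only difference is that the paper invokes the notion of a generic point directly (see Remark~\ref{rem-genericset}, where a single full-measure set $G_\mu$ works for \emph{all} continuous functions), whereas you explicitly build the generic set as a countable intersection over all cylinders with positive measure; this is a minor technical variant, not a different route.
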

\begin{proof}
Pick $\eps>0$, and consider the ball $B(x,\eps)$. It is a clopen set, hence $\BBone_{B(x,\eps)}$ is continuous.
The point $x$ is generic for $\mu$, then $\disp\lim_{\ninf}\frac1nS_{n}(\BBone_{B(x,\eps)})(x)=\mu(B(x,\eps))$, and this last term is positive. Therefore, there are infinitely many $n$ such that $\BBone_{B(x,\eps)}(\s^{n}(x))=1$.
\end{proof}

\begin{remark}
\label{rem-genericset}
Actually, one can get a stronger result than the one claimed by the above  Birkhoff Theorem: for every $\mu$ ergodic, there exists a set  $G_{\mu}$ of full $\mu$-measure, such that for every $x\in G_{\mu}$  and for every continuous function $f$,
$$\lim_{\ninf}\frac1nS_{n}(f)(x)=\int f\,d\mu.$$
A very important property is that for two distinct ergodic probabilities $\mu$ and $\nu$,  $G_{\mu}\cap G_{\nu}=\emptyset$ (see \cite{PY}).
$\blacksquare$\end{remark}

\section{Ergodic optimization and temperature zero}

The set $\CM_\s$ of invariant measures is quite large. It is thus natural to ask about measures with special properties. In this direction, a well known class
 is the one which can be obtained from {\em Thermodynamic Formalism}. The first results on this topic are from the 70's. We will briefly describe some basic results on this setting  below and we will present some more details  in Chapter \ref{chap-thermo}. Anyway, our goal here is to focus on {\em Ergodic Optimization}.

\begin{definition}\label{def-meas-max}
Let $A:\Omega\to\R$ be a continuous function. An invariant measure $\mu$ is said to be $A$-maximizing if
$$\int A\,d\mu=\max\left\{\int A\,d\nu,\ \nu\in\CM_\s\right\}=:m(A).$$
\end{definition}

Note that this maximum is well defined because $A$ is continuous and $\CM_\s$ is compact for the weak*-topology.

\Egs
$\bullet$ Consider
$\Omega=\{1,2 \}^\mathbb{N}$ and $A= \disp\frac12\BBone_{[1,2]}+ \frac12\BBone_{[2,1]}$.
In this case the maximizing probability is unique and has support in the periodic orbit of period  two $12121212\ldots$. This maximizing measure is $\disp\mu:=\frac12\delta_{(12)^{\8}}+\frac12\delta_{(21)^{\8}\ldots}$.

\noindent $\bullet$
There can be several  maximizing measures. Consider $A(x)=-\min(d(x,1\8), d(x,2^{\8}))\le 0$. The two measures $\mu_{1}:=\de_{\1}$ and $\mu_{\2}:=\de_{\2}$ are $A$-maximizing. Consequently, any convex combination
$$\mu_{t}:=t\mu_{\1}+(1-t)\mu_{\2}, \ t\in[0,1],$$
is also $A$-maximizing. It can be shown, in this particular case, that any   maximizing probability is of this form. Note that $\mu_{t}$ is not ergodic if $t\neq 0,1$.

\noindent $\bullet$
 Suppose $\Omega=\{1,2,3,4\}^\mathbb{N}$ and consider $A=\BBone_{[1]\cup[2]}$. any measure with support in $\{1,2\}^\mathbb{N}$  is $A$-maximizing. This example shows that there can be uncountably many ergodic maximizing measures.

\bigskip
Chapters \ref{chap-maxmeas} and \ref{chap-peierl} are devoted to more general results and tools for the study of maximizing measures. Chapter \ref{chap-explexam} is devoted to the study of an explicit example.

%
%

\bigskip
As we mentioned above,
another important class of  invariant measures appears in  Thermodynamic Formalism. This will be the topic of Chapter \ref{chap-thermo}.

Consider a fixed function $A:\Omega \to \mathbb{R}$, which is called a potential.
Without entering too much into the theory, for a real parameter $\be$ we shall associate to $\be.A$ a functional $\CP(\be)$, and for each $\be$ some measure $\mu_\be$ called \emph{equilibrium state for the potential $\be.A$}. In Statistical Mechanics, $\be$ represents the inverse of the temperature. Then, $\be\to+\8$ means that the temperature goes to 0.

We want to point out the relations between the two different ways to singularize measures:
\begin{enumerate}
\item $\be\mapsto \CP(\be)$ is convex and admits an asymptote for $\be\to+\8$. The slope is given by $\disp m(A)=\max\int A\,d\mu$.
\item Any accumulation point for $\mu_\be$, as $\be\to+\8$, is $A$-maximizing. Then, the main question is to know if there is \emph{convergence}, and, in the affirmative case, how does $\mu_\be$ select the limit ?
\end{enumerate}

These are the mains points which we consider in Chapter \ref{chap-maxmeas} (general results) and  Chapter \ref{chap-explexam} (an specific example).


\chapter{Thermodynamic Formalism and Equilibrium states \label{chap-thermo}}

\section{General results}

\subsection{Motivation and definitions}

A dynamical system, in general, admits a large number of different ergodic measures. A natural problem would be to find a way to singularize some special ones among the others.

Given $A:\Omega\to\R$, the thermodynamic formalism aims to singularize measures via the following principle:
\begin{equation}
\label{equ-def-pressure}
\CP(A):=\sup_{\mu\in\CM_{\s}}\left\{h_{\mu}+\int A\,d\mu\right\}.
\end{equation}
The quantity $h_{\mu}$ is called the Kolmogorov entropy. It is a non-negative real number, bounded by $\log d$ if $\Om=\{1,\ldots d\}^{\N}$. Roughly  speaking, it measures the chaos seen by the measure $\mu$. It is a concept defined for invariant probabilities. If some measure has positive and large entropy, this means that the system ruled by this measure is very complex. If the invariant measure has support in a periodic orbit its entropy is zero.

\begin{definition}
\label{def-press-et equil}
Any measure which realizes the maximum in \eqref{equ-def-pressure} is called an {\em equilibrium state} for $A$. The function $A$ is called the {\em potential} and $\CP(A)$ is {\em the pressure} of the potential.
\end{definition}

For a given $\s$-invariant measure $\mu$, the quantity $h_{\mu}+\disp\int A\,d\mu$ is called the {\em free-energy} of the measure (with respect to the potential $A$).

 This study is inspired by Statistical Mechanics: the set $\Omega:=\{1,\ldots k\}^{\N}$ is the one-dimensional lattice  with $k$ possible values for the spin in each site. The potential $A$ measures the interaction between sites. At a macroscopic level, the lattice has a stable behavior, at the microcosmic scale, due to internal agitation, the configuration of the material changes. Therefore, a {\em state of the system} is a probability on $\Omega$, the set of possible configurations; it is obtained by a principle of the  kind of the law  of large numbers.
The equilibrium at the macroscopic scale is exactly given by the states which maximize the free energy.

In Statistical Mechanics people usually consider the influence of the temperature $T$ by introducing a  term $\beta:=\frac1T$ and studying the equilibrium for $\be. A$.
For some fixed $A:\Om\to\R$ and $\be\in \R$ , we shall write $\CP(\be)$ instead of $\CP(\be.A)$.
It is an easy exercise to check that $\be\mapsto\CP(\be)$ is convex and  continuous. Therefore, the slope is non-decreasing and a simple computation shows that it converges to $m(A):=\disp\sup_{\mu}\int A\,d\mu$ as $\beta \to \infty$.

It seems clear that a motivation for people who introduced the thermodynamic formalism theory into the Dynamical systems (Bowen,  Ruelle  and Sinai) was to study the functional $\CP(\be)$, $\beta \in \mathbb{R}$. Given that,  one  natural question is to understand the limit of $\CP(\be)$, when $\be$ goes to $+\8$, that is the zero temperature case. This is one of the purposes of this text.

\subsection{Entropy and existence of equilibrium states}

We
refer the reader to \cite{Bow}\cite{PP} \cite{Bal} \cite{Kel} \cite{LoT}
for the some of the results we use from
Thermodynamic Formalism. We present here some results which are related to  optimization in ergodic theory.

We emphasize that the results we present here are stated for $\Om$ but they holds for any general irreducible subshift of finite type.

\subsubsection{Entropy}
For general results on entropy see also \cite{BCLMS} and \cite{LMMS}. The complete description  of entropy is somehow complicated and not relevant for the purpose of this course.
A simple definition  for the general case could be the following:

\begin{theorem}[and definition]
\label{th-def-entropy}
Let $\mu$ be a $\s$-invariant ergodic probability. Then, for $\mu$-a.e. $x=x_{0}x_{1}x_{2}\ldots$,
$$\lim_{\ninf}-\frac1n\log\mu([x_{0}x_{1}\ldots x_{n}])$$
exists and is independent of $x$. It is equal to $h_{\mu}$.

If $\mu_{0}$ and $\mu_{1}$ are both invariant and ergodic probabilities, for every $\al\in[0,1]$ set $\mu_{\al}:=\al.\mu_{1}+(1-\al).\mu_{0}$. Then,
$$h_{\mu_{\al}}=\al h_{\mu_{1}}+(1-\al)h_{\mu_{0}}.$$
\end{theorem}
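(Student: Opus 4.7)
The plan is to handle the two parts of the statement independently: Part 1 is the Shannon–McMillan–Breiman theorem in the ergodic case, and Part 2 is the affinity of Kolmogorov–Sinai entropy on convex combinations of ergodic measures.

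For Part 1, I would introduce the partition $\mathcal{P}=\{[1],\ldots,[d]\}$ of $\Omega$ into cylinders of length $1$, together with its refinements $\mathcal{P}_n=\bigvee_{i=0}^{n-1}\sigma^{-i}\mathcal{P}$ into cylinders of length $n$. Exploiting $\mathcal{P}_n=\mathcal{P}\vee\sigma^{-1}\mathcal{P}_{n-1}$ together with the $\sigma$-invariance of $\mu$, the chain rule of conditional probabilities gives
$$\mu\bigl([x_0\ldots x_{n-1}]\bigr) \,=\, \mu\bigl([x_0]\mid\sigma^{-1}\mathcal{P}_{n-1}\bigr)(x)\cdot\mu\bigl([x_1\ldots x_{n-1}]\bigr),$$
and iteration produces the telescoping identity
$$-\log\mu\bigl(C_n(x)\bigr) \,=\, \sum_{k=0}^{n-1} g_{n-k}(\sigma^k x), \qquad g_m(x) := -\log\mu\bigl([x_0]\mid\sigma^{-1}\mathcal{P}_{m-1}\bigr)(x).$$
The conditioning $\sigma$-algebras $\sigma^{-1}\mathcal{P}_{m-1}$ increase to $\sigma^{-1}\mathcal{B}$, so the martingale convergence theorem yields $g_m\to g_\infty := -\log\mu\bigl([x_0]\mid\sigma^{-1}\mathcal{B}\bigr)(x)$ both $\mu$-a.s.\ and in $L^1(\mu)$; Doob's maximal inequality applied to the associated conditional-probability martingale places $G:=\sup_{m\geq 1}|g_m-g_\infty|$ in $L^1(\mu)$. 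Comparing the telescoping sum to the Birkhoff sum $\frac{1}{n}\sum_k g_\infty\circ\sigma^k$, I would split the discrepancy into a bulk over indices with $n-k\geq N$ (controlled by $\sup_{m\geq N}|g_m-g_\infty|$, whose $L^1$-norm tends to $0$ as $N\to\infty$) and a tail of fixed length $N$ (controlled via $G(\sigma^k x)/n\to 0$ a.e., a consequence of $G\in L^1(\mu)$). Applying Theorem~\ref{birkhoff} to $g_\infty$ and invoking ergodicity, the limit is the constant $\int g_\infty\,d\mu$, which we take as the definition of $h_\mu$.

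For Part 2, I would reformulate entropy via the Kolmogorov–Sinai formula $h_\mu = \lim_n \frac{1}{n} H_\mu(\mathcal{P}_n)$ with $H_\mu(\mathcal{Q}) := -\sum_{Q\in\mathcal{Q}}\mu(Q)\log\mu(Q)$; for ergodic $\mu$ this agrees with the SMB limit of Part 1 by integrating the telescoping identity. Writing $\mu_\alpha(C) = \alpha\mu_1(C) + (1-\alpha)\mu_0(C)$ on each cylinder $C\in\mathcal{P}_n$, two elementary properties of $\phi(t):=-t\log t$ suffice. Concavity of $\phi$ applied term-by-term gives the lower bound
$$H_{\mu_\alpha}(\mathcal{P}_n) \,\geq\, \alpha H_{\mu_1}(\mathcal{P}_n) + (1-\alpha) H_{\mu_0}(\mathcal{P}_n),$$
while the subadditivity $\phi(u+v)\leq\phi(u)+\phi(v)$, combined with $\phi(\lambda t) = \lambda\phi(t)-\lambda t\log\lambda$ and $\sum_C\mu_i(C)=1$, yields
$$H_{\mu_\alpha}(\mathcal{P}_n) \,\leq\, \alpha H_{\mu_1}(\mathcal{P}_n) + (1-\alpha) H_{\mu_0}(\mathcal{P}_n) - \alpha\log\alpha - (1-\alpha)\log(1-\alpha).$$
Dividing by $n$ and letting $n\to\infty$, the bounded binary-entropy correction vanishes and sandwiches $h_{\mu_\alpha}$ precisely at $\alpha h_{\mu_1}+(1-\alpha)h_{\mu_0}$.

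The genuine obstacle is Part 1: the summand $g_{n-k}$ depends on both the running index and the overall length $n$, so Birkhoff's theorem cannot be applied directly, and a naive swap of limits is illegal. Reconciling the ergodic averaging with the martingale convergence $g_m\to g_\infty$ demands the uniform maximal control provided by Doob's inequality on the conditional-information submartingale---precisely where the SMB theorem departs in substance from the plain Birkhoff theorem. Part 2, by contrast, is a mechanical consequence of the concavity and subadditivity of $-t\log t$ once entropy is framed via partition entropies $H_\mu(\mathcal{P}_n)$.
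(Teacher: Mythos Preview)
The paper does not give a proof of this statement: it is presented as a ``Theorem (and definition)'' and taken for granted, with the text explicitly saying that ``the complete description of entropy is somehow complicated and not relevant for the purpose of this course.'' So there is nothing to compare your argument against on the paper's side.

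Your outline is the standard one and is correct. For Part~1 you reproduce Breiman's proof of the Shannon--McMillan--Breiman theorem: the telescoping identity, martingale convergence $g_m\to g_\infty$, the maximal inequality ensuring $\sup_m g_m\in L^1$, and the split into a bulk controlled by $\sup_{m\ge N}|g_m-g_\infty|$ and a tail of fixed length $N$. One small remark: what you call ``Doob's maximal inequality'' is in this context the Chung--Neveu bound $\mu\{\sup_m g_m>\lambda\}\le d\,e^{-\lambda}$, obtained by applying a stopping-time argument to the conditional-probability martingales $\mu([i]\mid\sigma^{-1}\mathcal P_{m-1})$; it is not literally Doob's $L^p$ maximal inequality, but the role it plays is exactly as you describe. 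The claim that $G(\sigma^k x)/n\to 0$ a.e.\ for $G\in L^1$ follows from Borel--Cantelli combined with $\sigma$-invariance, so the tail control is justified.

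For Part~2 your sandwich via concavity and the subadditivity $\phi(u+v)\le\phi(u)+\phi(v)$ of $\phi(t)=-t\log t$ is correct and efficient; dividing by $n$ kills the binary-entropy term $-\alpha\log\alpha-(1-\alpha)\log(1-\alpha)\le\log 2$. You are also right to note that the Kolmogorov--Sinai expression $\lim_n\frac1nH_\mu(\mathcal P_n)$ agrees with the SMB limit for ergodic $\mu$ by integrating the telescoping identity and using Ces\`aro convergence, so the two parts are consistent.
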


Roughly speaking, for $\mu$ ergodic and for $\mu$-typical $x=x_{0}x_{1}\ldots$,  $h_{\mu}$ is the exponential growth for $\mu([x_{0}\ldots x_{n-1}])$, with $n\to \infty$.

\Egs
$\bullet$ Let us consider $\Om=\{1,2\}^{\N}$ and $\mu$ the Bernoulli measure given by the line stochastic matrix $\disp P=\left(
\begin{array}{cc}
p & q \\
p & q \\
\end{array}\right).
$
We have seen before that
$$\mu([x_{0}\ldots x_{n-1}])=p^{\#\text{ of 1's in the word}}q^{\#\text{ of 2's in the word}}.$$
We also have seen before that $\disp \frac1n{\#\text{ of 1's in the word}}\to_{\ninf}\mu([1])=p$, and, $\disp \frac1n{\#\text{ of 2's in the word}}\to_{\ninf}\mu([2])=q$. Therefore,
$$h_{\mu}=-p\log p-q\log q.$$

$\bullet$
More generally, if $\mu$ is the  Markov measure associated to  a line stochastic matrix $P$, and,  $\pi=(\pi_{1}, \pi_{2}, ,,,\pi_{d})$ is the stationary vector, then,
$$ h(\mu)= - \sum_{i,j=1}^n \, \pi_i \,\,P(i,j)\,\, \log P(i,j).$$

\begin{proposition}
\label{prop-semiconti}
The (metric) entropy is upper-semicontinuous :
If $\mu_{n}$ converges to $\mu$ in the weak*-topology, then
$$h_{\mu}\ge \limsup_{\ninf}h_{\mu_{n}}.$$
\end{proposition}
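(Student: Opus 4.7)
The plan is to express $h_\mu$ as an infimum of continuous functionals of $\mu$, which immediately yields upper semicontinuity. The key tool is the cylinder partition $\CC_n(\Omega)$ introduced earlier in the chapter, which is both generating (any two distinct points are separated by some $\CC_n$) and made of clopen sets.

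For an invariant probability $\mu$, define the static entropy of this partition by
$$H_\mu(\CC_n) \;=\; -\sum_{C\in\CC_n(\Omega)} \mu(C)\log\mu(C),$$
with the convention $0\log 0 = 0$. First, I would note that $\mu \mapsto H_\mu(\CC_n)$ is continuous in the weak-$*$ topology: each indicator $\BBone_{C}$ of a cylinder $C\in\CC_n$ is continuous (cylinders are clopen), so $\mu\mapsto \mu(C)$ is continuous; combined with the continuity of $t\mapsto -t\log t$ on $[0,1]$ and the fact that $\CC_n(\Omega)$ is a finite partition, $H_\mu(\CC_n)$ is a finite sum of continuous functions of $\mu$, hence continuous.

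Next, I would use the invariance of $\mu$ together with the refinement identity $\CC_{n+m}(\Omega) = \CC_n(\Omega) \vee \s^{-n}\CC_m(\Omega)$ and the standard subadditivity $H_\mu(P\vee Q) \le H_\mu(P) + H_\mu(Q)$ to obtain
$$H_\mu(\CC_{n+m}) \;\le\; H_\mu(\CC_n) + H_\mu(\s^{-n}\CC_m) \;=\; H_\mu(\CC_n) + H_\mu(\CC_m).$$
By Fekete's subadditive lemma, $\tfrac{1}{n}H_\mu(\CC_n)$ converges to $\inf_{n\ge 1}\tfrac{1}{n}H_\mu(\CC_n)$. Because $\CC_1(\Omega)$ generates the Borel $\sigma$-algebra under the shift, the Kolmogorov–Sinai theorem identifies this limit with the metric entropy $h_\mu$, giving the crucial formula
$$h_\mu \;=\; \inf_{n\ge 1}\,\frac{1}{n}\,H_\mu(\CC_n).$$

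The conclusion is then immediate: $h_\mu$ is the pointwise infimum of the family of continuous functions $\mu\mapsto \tfrac{1}{n}H_\mu(\CC_n)$, and an infimum of continuous functions is upper semicontinuous. Thus $\mu_n \xrightarrow{w*} \mu$ implies $\limsup_{n}h_{\mu_n} \le h_\mu$. The main subtle point to handle carefully is the continuity of each $\mu\mapsto H_\mu(\CC_n)$: it relies essentially on the fact that the partition consists of clopen sets (so the boundary of each atom has empty interior and, more to the point, their indicators are continuous), a feature specific to symbolic dynamics and which would fail for a generic measurable partition.
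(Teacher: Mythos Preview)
The paper does not actually prove Proposition~\ref{prop-semiconti}; it is stated without proof and then immediately used to deduce existence of equilibrium states (Theorem~\ref{th-exist-equil}). So there is no ``paper's own proof'' to compare against.

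Your argument is the standard and correct one in the symbolic setting. The key points --- continuity of $\mu\mapsto H_\mu(\CC_n)$ from the clopenness of cylinders, subadditivity of $n\mapsto H_\mu(\CC_n)$ via $\CC_{n+m}=\CC_n\vee\s^{-n}\CC_m$ and invariance, identification of the limit with $h_\mu$ by Kolmogorov--Sinai, and upper semicontinuity of an infimum of continuous functions --- are all handled correctly. You are right to flag that the argument hinges on having a finite generating partition with clopen atoms; this is precisely why upper semicontinuity of entropy holds for subshifts (and more generally for expansive maps) but can fail for general dynamical systems.
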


\subsubsection{Existence of equilibrium states}

This is an immediate consequence of Proposition \ref{prop-semiconti}.
\begin{theorem}
\label{th-exist-equil}
If $A$ is continuous, then there exists  at least one equilibrium state for $A$.
\end{theorem}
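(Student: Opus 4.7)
The plan is to exhibit an equilibrium state as the weak*-limit of a maximizing sequence. The three ingredients already available are exactly what such a compactness argument needs: $\CM_\s$ is compact and convex for the weak*-topology (Corollary \ref{coro-setproba} together with the fact, noted just after Proposition \ref{prop-meas-inv-fonc}, that invariance is preserved under weak*-limits); the functional $\mu\mapsto\int A\,d\mu$ is weak*-continuous whenever $A$ is continuous (by the very definition of weak*-convergence); and the entropy $\mu\mapsto h_{\mu}$ is upper semi-continuous (Proposition \ref{prop-semiconti}).

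First I would pick a maximizing sequence: since $\CP(A)=\sup_{\mu\in\CM_{\s}}\{h_{\mu}+\int A\,d\mu\}$ is a supremum, choose $\mu_{n}\in\CM_{\s}$ with
$$h_{\mu_{n}}+\int A\,d\mu_{n}\xrightarrow[n\to\infty]{}\CP(A).$$
By weak*-compactness of $\CM_{\s}$, after passing to a subsequence (still denoted $\mu_{n}$), we may assume $\mu_{n}\xrightarrow{\text{w*}}\mu_{\infty}$ for some $\mu_{\infty}\in\CM_{\s}$.

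Next I would pass to the limit in the free energy. Continuity of $A$ gives $\int A\,d\mu_{n}\to\int A\,d\mu_{\infty}$, and upper semi-continuity of the entropy gives $\limsup_{n}h_{\mu_{n}}\le h_{\mu_{\infty}}$. Combining,
$$\CP(A)=\lim_{n}\Bigl(h_{\mu_{n}}+\int A\,d\mu_{n}\Bigr)=\limsup_{n}h_{\mu_{n}}+\int A\,d\mu_{\infty}\le h_{\mu_{\infty}}+\int A\,d\mu_{\infty}.$$
The reverse inequality $h_{\mu_{\infty}}+\int A\,d\mu_{\infty}\le\CP(A)$ is immediate from the definition of $\CP(A)$ as a supremum. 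Hence equality holds and $\mu_{\infty}$ realizes the supremum, i.e.\ it is an equilibrium state for $A$.

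The only delicate point in this plan is the passage from $\lim$ to $\limsup$ on the entropy term, which is precisely what upper semi-continuity buys us; if instead the entropy were merely lower semi-continuous, the argument would fail and one would have to replace weak*-compactness by a more refined variational device. Since Proposition \ref{prop-semiconti} is already stated on our setting (a full one-sided shift, and more generally an irreducible subshift of finite type), there is no real obstacle, and the proof should be essentially the three lines above.
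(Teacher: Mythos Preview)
Your argument is correct and is exactly the paper's approach, just unpacked: the paper simply observes that $\mu\mapsto h_{\mu}+\int A\,d\mu$ is upper semi-continuous on the compact set $\CM_{\s}$ and therefore attains its supremum, while you spell out the standard maximizing-sequence argument that underlies this fact.
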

\begin{proof}
The function $\mu \to  h_{\mu}+\int A\,d\mu$ is upper-semicontinuous in the compact set of the invariant probabilities. Therefore attains the maximal value in at least one invariant probability.
\end{proof}

At the end of this chapter we will study the special case where  $A$ depend just on the two first coordinates. In particular, we will be able to exhibit an explicit expression for   the corresponding
equilibrium state.

\section{Uniqueness of the Equilibrium State}
Uniqueness  does not always hold. Nevertheless a key result is the following:
\begin{theorem}
\label{th-uniqu-equil}
If $A:\Om\to\R$ is H\"older continuous, then there is a unique equilibrium state for $A$. Moreover, it is a {\em Gibbs measure} and $\be\to\CP(\be)$ is analytic.
\end{theorem}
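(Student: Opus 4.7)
The plan is to follow the standard Ruelle-Perron-Frobenius (transfer operator) approach. First, I would introduce, for a continuous $A$, the transfer operator $\CL_{A}:\CC^{0}(\Om)\to\CC^{0}(\Om)$ defined by
$$\CL_{A}f(x)=\sum_{\s(y)=x}e^{A(y)}f(y),$$
and then restrict it to the Banach space $\CC^{\al}(\Om)$ of $\al$-H\"older continuous functions with its natural H\"older norm. The H\"older hypothesis on $A$ is what makes this restriction behave well: it yields a Lasota-Yorke type inequality and, equivalently, invariance and strict contraction (in the Birkhoff projective metric) of the cone of strictly positive log-H\"older functions. This spectral analysis is the step I expect to be the main technical obstacle.

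From such a contraction one deduces the Ruelle-Perron-Frobenius theorem for $\CL_{A}$: there exist $\la>0$, a strictly positive H\"older eigenfunction $h$ with $\CL_{A}h=\la h$, and a Borel probability $\nu$ with $\CL_{A}^{*}\nu=\la\nu$, normalized so that $\int h\,d\nu=1$. Moreover $\la$ is a simple eigenvalue, isolated from the rest of the spectrum by a definite spectral gap. Setting $\mu:=h\nu$, a direct computation using the two eigenequations shows that $\mu$ is $\s$-invariant, and iterating them yields the Gibbs bound
$$C_{1}\le \frac{\mu([x_{0}\ldots x_{n-1}])}{\exp(-n\log\la+S_{n}A(y))}\le C_{2},$$
for every $y\in[x_{0}\ldots x_{n-1}]$, with constants independent of $n$, $x$ and $y$.

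Next I would show $\mu$ is the equilibrium state. After deriving ergodicity of $\mu$ from the spectral gap, applying the Shannon-McMillan-Breiman theorem together with the Birkhoff ergodic theorem to the Gibbs bound gives $h_{\mu}+\int A\,d\mu=\log\la$. The variational inequality $\CP(A)\ge h_{\mu}+\int A\,d\mu$ is automatic, while the reverse $\CP(A)\le\log\la$ follows from a standard comparison bounding the partition sums $\sum_{|w|=n}\sup_{[w]}e^{S_{n}A}$ by iterates of $\CL_{A}$ applied to the constant function $1$. Uniqueness then follows because any other equilibrium state $\mu'$ must satisfy the same Gibbs bound (by the equality case in the subadditivity arguments used in the variational principle), hence is absolutely continuous with respect to $\mu$ with bounded density; ergodicity of $\mu$ forces $\mu'=\mu$.

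Finally, for analyticity of $\be\mapsto\CP(\be)$, observe that $\be\mapsto\CL_{\be A}$ is a norm-analytic family of bounded operators on $\CC^{\al}(\Om)$, since the symbol $e^{\be A}$ depends analytically on $\be$. The spectral gap enjoyed by the leading eigenvalue $\la(\be)$ persists under small complex perturbations of $\be$, so Kato's analytic perturbation theory yields that $\be\mapsto\la(\be)$ is analytic in a complex neighborhood of the real axis, whence $\CP(\be)=\log\la(\be)$ is analytic on $\R$. As flagged above, the main obstacle is the spectral-gap step; once the cone contraction (or equivalently a Lasota-Yorke estimate on $\CC^{\al}$) is secured, the Gibbs property, identification with the equilibrium state, uniqueness, and analyticity via Kato are all essentially routine.
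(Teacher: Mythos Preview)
Your proposal is correct and follows essentially the same route as the paper: transfer operator, spectral gap (quasi-compactness of $\CL_{A}$ on the H\"older space), construction of $\mu_{A}=H_{A}\nu_{A}$, the Gibbs property, identification of $\CP(A)$ with $\log\lambda_{A}$, and analyticity of $\be\mapsto\CP(\be)$ via analytic perturbation theory for the isolated simple eigenvalue. The only minor difference is in the uniqueness step: the paper argues directly from the left Gibbs inequality that any other ergodic $\nu$ satisfies $h_{\nu}+\int A\,d\nu<\log\lambda_{A}$, whereas you go through absolute continuity and ergodicity; both are standard and equivalent in this setting.
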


The concept of Gibbs measure will be explained below.
We recall  that $A: \Omega \to \mathbb{R}$ is said to be {\bf $\alpha$-H\"older}, $0<\alpha<1$, if there exists $C>0$, such that, for all $x,y$ we have
$ |\, A(x)-A(y)\,|\leq  C\, d(x,y)^\alpha$.

For a fixed value $\alpha$, we denote by $\mathcal{H}_\alpha$ the set of $\alpha$-H\"older functions $A:\Omega \to\mathbb{R}$. $\mathcal{H}_\alpha$ is a vector space.

For a fixed $\alpha$,
the norm we consider in the set $\mathcal{H}_\alpha$  of $\alpha$-H\"older potentials $A$ is
$$
||A||_\alpha=\sup_{x\neq y}\frac{| A(x)-A(y)|}{d(x,y)^\alpha}+ \sup_{x \in \Omega}\, |A(x)|. $$

For a fixed $\alpha$, the vector space
$\mathcal{H}_\alpha$  is complete for the above norm.

We want to emphasize Theorem \ref{th-uniqu-equil}. If existence of equilibrium state is done via a general result of maximization of a semi-continuous function, uniqueness, by the other hand, is obtained for H\"older continuous potential via a completely different way: the key tool is an operator acting on continuous and H\"older continuous functions. Then, the pressure and the equilibrium state are related to the spectral properties of this operator which had its origin in Statistical Mechanics where is called the transfer operator.

The hypothesis of $A$ being  H\"older continuous means (in the Statistical Mechanics setting) that the interactions described by it decay very fast (in an exponential way) for spins which are located more and more  distant in the lattice $\mathbb{N}$. This decay is not so fast for a potential which is continuous but not H\"older.

In the general case,
for a fixed H\"older  potential $A$, given $\beta_1 \neq \beta_2$, we have that $\mu_{\beta_1} \neq \mu_{\beta_2}$. In this case the generic sets of Remark \ref{rem-genericset} are disjoint. Moreover, for any $\be$, the probability   $\mu_\beta$ has support on the all set $\Omega$.

\begin{remark}
\label{rem-phasetransi}
The non-uniqueness of the equilibrium state for a potential $A$ is associated to the phenomena of phase transition (see \cite{Hof} \cite{Ge} \cite{LZ} \cite{LF} \cite{BLL1}).
$\blacksquare$\end{remark}
\subsection{The Transfer operator}
In this section we consider a fixed $\alpha$-H\"older potential $A:\Omega\to \mathbb{R}.$

\begin{definition}
We denote by $\CL_A:\mathcal{C}^{0} (\Omega)\to \mathcal{C}^{0}(\Omega)$ the Transfer operator corresponding to the potential
$A$, which is given in the following way: for a given $\phi$ we will get another function $\CL_A(\phi)=\varphi$, such that,
$$
\varphi(x)=\sum_{a \in \{1,2,...,d\}}
e^{ A(ax)}\,
\phi(ax).
$$
\end{definition}

In another form
$$
\varphi(x) =\varphi(x_0x_1\cdots)=\sum_{a \in \{1,2,...,d\}}
e^{ A(ax_0x_1x_2,...)}\,
\phi(ax_0x_1x_2...).
$$

The transfer operator is also called the Ruelle-Perron-Frobenius operator.


It is immediate to check that $\CL_{A}$ acts on continuous functions. It also acts on $\al$-H\"older functions if $A$ is $\al$-H\"older.

If $\mu$ is a probability, $f\mapsto \disp\int \CL_{A}(f)\,d\mu$ is a bounded linear form on $\CC^{0}(\Om)$. Therefore, by Riesz Theorem (see Theorem \ref{th-riesz}), there exists $\nu$ such that for every $f\in\CC^{0}(\Om)$, $\disp\int f d\nu= \int \CL_{A}(f)\, d \mu$.  We set $\CL_{A}^* (\mu)=\nu$, and we call $\CL_{A}^*:\mu\mapsto \nu$ the dual operator of $\CL_{A}.$

\bigskip

Note that if $\CL_{A}(1)=1$, then,  $\CL_{A}^* (\mu)$ is a probability, in the case $\mu$ is a probability.

\begin{theorem}[see \cite{krerley}]
\label{th-eigenmeas}
Let $\lambda_{A}$ be the spectral radius of $\CL_{A}$. Then, $\lambda_{A}$ is an eigenvalue for $\CL_{A}^{*}$: there exists a probability measure  $\nu_{A}$ such that
$$\CL_{A}^{*}(\nu_{A})=\lambda_{A}\nu_{A}.$$
This probability is called the eigenmeasure and/or the conformal measure.
\end{theorem}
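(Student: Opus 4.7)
My plan is to apply a fixed-point argument to a suitably normalized version of the dual operator. Define the map
$$T\colon \mu \longmapsto \frac{\CL_A^{*}(\mu)}{\CL_A^{*}(\mu)(\Omega)}$$
on the space $\CP(\Omega)$ of Borel probabilities. By the definition of the dual operator, the denominator equals $\int \CL_A(1)\,d\mu$, and since $\CL_A(1)(x)=\sum_{a} e^{A(ax)}$ is continuous and uniformly bounded below by $d\cdot e^{-\|A\|_\infty}>0$, the map $T$ is well defined. Moreover, $T(\mu)$ is a probability, so $T$ maps $\CP(\Omega)$ into itself.

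The next step is to verify that $T$ is continuous for the weak*-topology. Since $\CL_A$ sends $\CC^0(\Omega)$ to $\CC^0(\Omega)$, the definition of $\CL_A^{*}$ immediately gives weak*-continuity of $\mu\mapsto\CL_A^{*}(\mu)$. The normalization $\mu\mapsto\int\CL_A(1)\,d\mu$ is likewise weak*-continuous and bounded away from zero, so $T$ is continuous. Since $\CP(\Omega)$ is compact and convex in the weak*-topology by Corollary \ref{coro-setproba}, the Schauder--Tychonoff fixed-point theorem yields some $\nu_A\in\CP(\Omega)$ with $T(\nu_A)=\nu_A$. Setting $\lambda=\int\CL_A(1)\,d\nu_A>0$ gives $\CL_A^{*}(\nu_A)=\lambda\,\nu_A$.

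The remaining step is to identify $\lambda$ with the spectral radius $\lambda_A$. Iterating the eigenrelation and using the duality between $\CL_A$ and $\CL_A^*$, one has
$$\lambda^n=\int 1\,d(\CL_A^{*})^n\nu_A=\int \CL_A^n(1)\,d\nu_A\le \|\CL_A^n(1)\|_\infty.$$
Because $\CL_A$ is a positive operator on $\CC^0(\Omega)$, the operator norm satisfies $\|\CL_A^n\|=\|\CL_A^n(1)\|_\infty$, so the Gelfand formula gives $\lambda_A=\lim_n\|\CL_A^n(1)\|_\infty^{1/n}$ and hence $\lambda\le\lambda_A$. For the reverse inequality, I would use the fact that on a transitive subshift there is a constant $C>0$ (independent of $n$) such that $\CL_A^n(1)(x)\ge C\|\CL_A^n(1)\|_\infty$ uniformly in $x$: indeed, for each point $y$ realizing (approximately) the maximum of $\CL_A^n(1)$ and each $x\in\Omega$, one concatenates $y$ with a fixed word connecting any digit to $x_0$, and the Birkhoff sums of $A$ along these two preimage trees differ by a bounded amount; integrating yields $\int\CL_A^n(1)\,d\nu_A\ge C\|\CL_A^n(1)\|_\infty$, hence $\lambda\ge\lambda_A$.

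The main obstacle is this last comparison, which is where the transitivity of the shift genuinely enters: without it one cannot control the pointwise minimum of $\CL_A^n(1)$ by its maximum, and the eigenvalue produced by Schauder--Tychonoff could a priori be any positive number in the spectrum. The fixed-point half is entirely soft; it is the quantitative distortion estimate for $\CL_A^n(1)$ that does the real work, and it is also this ingredient which will later upgrade $\nu_A$ to a Gibbs measure in the H\"older setting of Theorem \ref{th-uniqu-equil}.
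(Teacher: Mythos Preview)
Your fixed-point argument is exactly what the paper sketches in the remark following the theorem: the paper writes down the same normalized map $\mu\mapsto \CL_A^*(\mu)/\int\CL_A(1)\,d\mu$ and invokes Schauder--Tychonoff, but then explicitly says ``It is however not clear that the associated eigenvalue is the spectral radius of $\CL_A$'' and defers the full result to the reference \cite{krerley}. So you and the paper agree on the existence half, and you go further by supplying the identification $\lambda=\lambda_A$, which the paper does not attempt.

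Your upper bound $\lambda\le\lambda_A$ is clean and correct. The lower bound is right in spirit but your explanation is phrased for a general transitive subshift (``connecting words''), which makes it a bit vaguer than it needs to be here. In the paper's setting $\Omega=\{1,\dots,d\}^{\N}$ is the \emph{full} shift and $A$ is $\alpha$-H\"older, so the distortion estimate is very direct: for any $x,y$ and any word $w$ of length $n$, the two $n$-preimages $wx$ and $wy$ satisfy $d(\sigma^k(wx),\sigma^k(wy))\le 2^{-(n-k)}$, hence
\[
|S_n(A)(wx)-S_n(A)(wy)|\le \|A\|_\alpha\sum_{j=1}^{n}2^{-\alpha j}\le \frac{\|A\|_\alpha}{2^{\alpha}-1}=:C'.
\]
Summing over all words $w$ gives $\CL_A^n(1)(x)\ge e^{-C'}\CL_A^n(1)(y)$ uniformly in $n,x,y$; integrating against $\nu_A$ yields $\lambda^n\ge e^{-C'}\|\CL_A^n(1)\|_\infty$ and hence $\lambda\ge\lambda_A$. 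No connecting words are needed. You are also right that this is precisely where the H\"older hypothesis is used, and that the same estimate underlies the Gibbs property in Theorem~\ref{th-uniqu-equil}.
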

We remind that the spectral radius is given by
$$\l_{A}:=\limsup_{\ninf}\frac1n\log|||\CL_{A}^{n}|||,\text{ and }|||\CL_{A}^{n}|||=\sup_{||\psi||=1}||\CL^{n}(\psi)||_{\8}.$$

Note that the Schauder-Tychonov Theorem shows that there exists an eigenmeasure. Indeed, consider the function acting on the convex and compact set of probabilities $\mu \to \frac{\CL_{A}^{*}(\mu)}{\int \,\CL_{A}( 1) \, d \mu }.$ It is however not clear that the associated eigenvalue is the spectral radius of $\CL_{A}$.

\bigskip
The main ingredient to prove uniqueness of the equilibrium state is:
\begin{theorem}
\label{theo-trouspect}
The operator $\CL_{A}$ is quasi-compact on $\CH_{\al}$: $\lambda_{A}$ is simple isolated  and the unique eigenvalue with maximal radius. The rest of the spectrum is contained in a disk $\mathbb{D}(0,\rho\lambda_{A})$ with $0<\rho<1$.
\end{theorem}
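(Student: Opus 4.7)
The plan is to establish quasi-compactness via a Lasota--Yorke type inequality and then derive the spectral gap from positivity and topological mixing of $\s$. First, I would write the iterated operator explicitly:
$$\CL_A^n(\phi)(x)=\sum_{\s^n(y)=x}e^{S_n A(y)}\phi(y),$$
and prove the key \emph{bounded distortion} estimate for Birkhoff sums: if $y,y'$ satisfy $\s^n(y)=x$, $\s^n(y')=x'$ and share the same first $n$ symbols, then $d(\s^j y,\s^j y')=2^{j-n}d(x,x')$, so summing the $\al$-H\"older bound for $A$ over $j=0,\ldots,n-1$ yields
$$|S_n A(y)-S_n A(y')|\le \frac{\|A\|_\al}{1-2^{-\al}}\,d(x,x')^\al,$$
uniformly in $n$. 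This uniform control of the exponential weights is the heart of the argument.

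Second, combining bounded distortion with the existence of the eigenmeasure $\nu_A$ (Theorem~\ref{th-eigenmeas}), a standard computation produces a Lasota--Yorke inequality of the form
$$\|\CL_A^n\phi\|_\al\le C_1\,\lambda_A^n\,2^{-n\al}\|\phi\|_\al+C_2\,\lambda_A^n\|\phi\|_\infty,$$
with $C_1,C_2$ independent of $n$. Together with the compact embedding $\CH_\al\hookrightarrow \CC^0(\Om)$ (Arzel\`a--Ascoli, since $\Om$ is compact and $\al$-H\"older balls are equicontinuous), Hennion's theorem yields that $\CL_A$ is quasi-compact on $\CH_\al$ with essential spectral radius at most $2^{-\al}\lambda_A$. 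In particular every spectral value of modulus $\lambda_A$ is an isolated eigenvalue of finite multiplicity.

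Third, to pin down the peripheral spectrum I would first use Schauder--Tychonoff on the positive cone to produce a continuous, strictly positive eigenfunction $h_A\in\CH_\al$ with $\CL_A h_A=\lambda_A h_A$; the bounded distortion estimate also shows that any a priori bounded non-negative eigenfunction is automatically $\al$-H\"older and bounded away from $0$. Then I would normalize by passing to the operator $\wt\CL(\phi):=\dfrac{1}{\lambda_A h_A}\CL_A(h_A\phi)$, which is Markov ($\wt\CL(1)=1$) and preserves positivity. For this normalized operator one shows a contraction in the sup-seminorm $\mathrm{osc}(\phi):=\sup\phi-\inf\phi$ using topological mixing of the full shift: for $n$ large, every branch of $\s^{-n}$ samples values of $\phi$ spread over $\Om$, hence $\mathrm{osc}(\wt\CL^n\phi)\le (1-\eta)\mathrm{osc}(\phi)$ for some $\eta>0$. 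This simultaneously forces $1$ to be a simple eigenvalue of $\wt\CL$, precludes any other eigenvalue of modulus $1$, and, combined with the finite-dimensional invariant subspace from quasi-compactness, confines the rest of the spectrum of $\CL_A$ in a disk $\mathbb{D}(0,\rho\lambda_A)$ with $\rho<1$.

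The main obstacle is the peripheral part: the Lasota--Yorke/Hennion step is essentially mechanical once distortion is controlled, but showing that the entire peripheral spectrum reduces to the simple eigenvalue $\lambda_A$ requires an honest use of positivity and mixing (either the $\mathrm{osc}$-contraction sketched above or, equivalently, a Birkhoff--Hilbert projective metric argument on a suitable invariant cone of log-Lipschitz positive functions). Verifying that such a cone is mapped strictly inside itself under some iterate $\CL_A^N$, and that its Hilbert diameter becomes finite, is the delicate step that produces the uniform spectral gap $\rho<1$.
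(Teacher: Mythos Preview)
The paper does not prove Theorem~\ref{theo-trouspect}; it is stated as a classical input from thermodynamic formalism, with the reader referred at the start of the section to \cite{Bow}, \cite{PP}, \cite{Bal}, \cite{Kel}, \cite{LoT}. Your outline is a correct and standard route to the result, essentially the Lasota--Yorke/Hennion argument one finds in \cite{Bal} or \cite{PP}, so there is nothing to compare against in the paper itself.

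One small caution on your third step: the oscillation contraction as you phrase it (``every branch of $\s^{-n}$ samples values of $\phi$ spread over $\Om$'') does not by itself give a contraction rate independent of $\phi$, because the bound $|\wt\CL^{n}\phi(x)-\wt\CL^{n}\phi(x')|$ coming from bounded distortion involves $\|\phi\|_{\al}$, not just $\mathrm{osc}(\phi)$. The clean way is the two-step argument you already have the ingredients for: the Lasota--Yorke inequality for $\wt\CL$ gives $\|\wt\CL^{n}\phi\|_{\al}\le C_{1}2^{-n\al}\|\phi\|_{\al}+C_{2}\|\phi\|_{\infty}$, so for some fixed $N$ one has $\|\wt\CL^{N}\phi\|_{\al}\le C\|\phi\|_{\infty}$; then the oscillation bound applied to $\wt\CL^{N}\phi$ yields a genuine contraction $\mathrm{osc}(\wt\CL^{2N}\phi)\le(1-\eta)\,\mathrm{osc}(\phi)$. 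Alternatively, as you note, the Birkhoff--Hilbert cone argument on log-H\"older positive functions handles simplicity and the gap in one stroke.
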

From Theorem \ref{theo-trouspect} we get a unique $H_{A}$, up to the normalization $\disp\int H_{A}\,d\nu_{A}=1$, such that
$$\CL_{A}(H_{A})=\lambda_{A}H_{A}.$$
\exobis
Show that the measure defined by $\mu_{A}=H_{A}\nu_{A}$ is $\s$-invariant.

\bigskip
It can be proved that this measure is actually a {\em Gibbs} measure:  which means, there exists $C_{A}>0$ such that for every $x=x_{0}x_{1}\ldots$ and for every $n$,

\begin{equation}
\label{equ-def-gibbs}
e^{-C_{A}}\le \frac{\mu_{A}([x_{0}\ldots x_{n-1}])}{e^{S_{n}(A)(x)-n\log\lambda_{A}}}\le e^{C_{A}}.
\end{equation}

Actually, these two inequalities yields that the free energy for $\mu_{A}$ is $\log\lambda_{A}$. Moreover, the left-side inequality yields that for any other ergodic measure $\nu$,
$$h_{\nu}+\int A\,d\nu<\log\l_{A}.$$
In particular we get $\CP(A)=\l_{A}$ and $\mu_{A}$ is the unique equilibrium state for $A$.

\medskip
The same kind of results can be get  for $\be.A$ instead of $A$. Now, the spectral gap obtained in Theorem \ref{theo-trouspect} and general results for perturbations of spectrum of operators yield that $\be\mapsto \CP(\be)$ is locally analytic. A simple   argument of connectness shows that it is globally analytic.

\begin{remark}
\label{rem-cas-conti}
The corresponding theory when $A$ is just continuous (not H\"older) is quite different (see \cite{Wal})
$\blacksquare$\end{remark}

We point out that different kinds of Transfer operators has been used in other areas and other settings: Differential and Complex Dynamics, Differential Geometry, Number Theory, Eigenvalues of the Laplacian,
Zeta functions, $C^*$-Algebras, Computer Science, Quantum Computing, Economics, Optimization, etc... (see \cite{PP}, \cite{Bow}, etc...)

\subsection{Some more results}\label{subsec-moreresulpressure}
The theory described above can be generalized to other cases. In the special case of $\Om$ (the full shift Bernoulli space) we can actually prove that $\l_{A}$ is the unique dominating eigenvalue.
Moreover, we get for every $\psi$ H\"older continuous,
\begin{equation}
\label{equ-trouspec}
\CL_{A}^{n}(\psi)=e^{n\CP(A)}\int\psi\,d\nu_{\be}.\phi+e^{n(\CP(A)-\eps)}\psi_{n},
\end{equation}
where $\eps$ is a positive real number (depending on $A$), $\psi_{n}$ is continuous and $\disp||\psi_{n}||_{\8}\le C. ||\psi||_{\8}$, for every $n$ and $C$ is a constant (depending on $A$).
From this one gets
$$\CP(A)=\lim_{\ninf}\frac1n\log\CL_{A}^{n}(\BBone),$$
which yields $\disp\frac{d\CP}{d\be}(\be)=\int A\,d\mu_{\be}$.

We have already  mentioned in the beginning of this chapter that for convexity reason, if $\be$ goes to $+\8$, the slope of the graph of $\CP(\be)$ goes to $\disp m(A)=\sup \int A\,d\mu$. We can now give a more precise result:

\begin{proposition}
\label{prop-asymptopress}
The graph of $\CP(\be)$ admits an asymptote if $\be$ goes to $+\8$. The slope is given by $\disp m(A)=\sup \int A\,d\mu$. Any accumulation point for $\mu_{\be}$ is a $A$-maximizing measure.
\end{proposition}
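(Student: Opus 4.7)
The plan is to exploit convexity of $\be\mapsto\CP(\be)$ together with the uniform entropy bound $h_{\mu}\le\log d$, which holds on $\Om=\{1,\ldots,d\}^{\N}$. First I would establish the elementary two-sided estimate
$$\be\,m(A)\le \CP(\be)\le \be\,m(A)+\log d,$$
where the left inequality is obtained by plugging any $A$-maximizing measure (whose existence follows from compactness of $\CM_{\s}$ and upper semicontinuity of entropy) into the variational definition of $\CP$, and the right inequality follows from $h_{\mu}\le\log d$ together with $\int A\,d\mu\le m(A)$ for every $\mu\in\CM_{\s}$. In particular $\CP(\be)/\be\to m(A)$, which already pins down the asymptotic slope claimed in the proposition.

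To produce an actual asymptote, I would next study the function $g(\be):=\CP(\be)-\be\,m(A)$. Since $\CP$ is convex, so is $g$, and using the formula $\dfrac{d\CP}{d\be}(\be)=\int A\,d\mu_{\be}$ recalled just before the proposition together with $\int A\,d\mu_{\be}\le m(A)$, we see that $g'\le 0$ wherever it exists; by convexity $g$ is therefore non-increasing. On the other hand, if $\nu$ is any $A$-maximizing measure, then $\CP(\be)\ge h_{\nu}+\be\,m(A)$, so $g(\be)\ge h_{\nu}\ge 0$. A non-increasing function bounded below converges, so $g(\be)\searrow\ell\in[0,\log d]$, and $\CP(\be)=\be\,m(A)+\ell+o(1)$ is the desired asymptote.

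For the last statement, suppose $\be_{n}\to+\8$ and $\mu_{\be_{n}}\to\mu$ in the weak-$*$ topology. The limit $\mu$ lies in $\CM_{\s}$ because $\CM_{\s}$ is weakly closed. Since each $\mu_{\be_{n}}$ is an equilibrium state for $\be_{n}.A$ (Theorem \ref{th-exist-equil}), we have
$$\frac{\CP(\be_{n})}{\be_{n}}=\frac{h_{\mu_{\be_{n}}}}{\be_{n}}+\int A\,d\mu_{\be_{n}}.$$
The left-hand side tends to $m(A)$ by the asymptote; the first term on the right tends to $0$ since $0\le h_{\mu_{\be_{n}}}\le\log d$; and $\int A\,d\mu_{\be_{n}}\to\int A\,d\mu$ by weak-$*$ convergence, using continuity of $A$. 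Therefore $\int A\,d\mu=m(A)$, so $\mu$ is $A$-maximizing.

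The only genuinely delicate step is the use of the derivative formula for $\CP$: convexity alone guarantees differentiability of $\CP$ only at all but countably many $\be$, so strictly speaking the monotonicity of $g$ should be justified via one-sided slopes of a convex function, or, cleaner still, by writing $g(\be)=\sup_{\nu\in\CM_{\s}}\bigl(h_{\nu}+\be(\int A\,d\nu-m(A))\bigr)$ as a supremum of affine functions of non-positive slope, which makes the non-increasing character of $g$ manifest without invoking differentiability.
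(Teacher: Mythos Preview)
Your proof is correct and follows essentially the same route as the paper: the two-sided bound $\be\,m(A)\le\CP(\be)\le\be\,m(A)+\log d$, then monotonicity and nonnegativity of $g(\be)=\CP(\be)-\be\,m(A)$ to obtain the asymptote. The only minor difference is in the last step: the paper invokes $\CP'(\be)=\int A\,d\mu_{\be}$ and argues that this derivative converges to the asymptotic slope $m(A)$, whereas you bypass the derivative formula entirely by dividing the equilibrium identity by $\be_n$ and killing the entropy term with the bound $h_{\mu_{\be_n}}\le\log d$; your version is slightly more elementary and your closing remark about expressing $g$ as a supremum of affine functions of nonpositive slope is a clean way to handle the differentiability issue that the paper glosses over.
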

\begin{proof}
Convexity is a consequence of the definition of the pressure via a supremum. Let $\mu_{\8}$ be any $A$-maximizing measure.
Since the entropy is bounded we immediately get
$$m(A)\le \frac{h_{\mu_{\8}}}{\be}+\int A\,d\mu_{\8}\le \frac{\CP(\be)}{\be}\le
\frac{\log d}\be+\int A\,d\mu_{\be}\le  \frac{\log d}\be+m(A).$$
This proves that the asymptotic slope for $\CP(\be)$ is $m(A)$:
\begin{equation}
\label{equ-slopepress}
\lim_{\be\to+\8}\frac{\CP(\be)}{\be}=m(A).
\end{equation}
\begin{figure}[H]
\begin{center}
\includegraphics[scale=0.5]{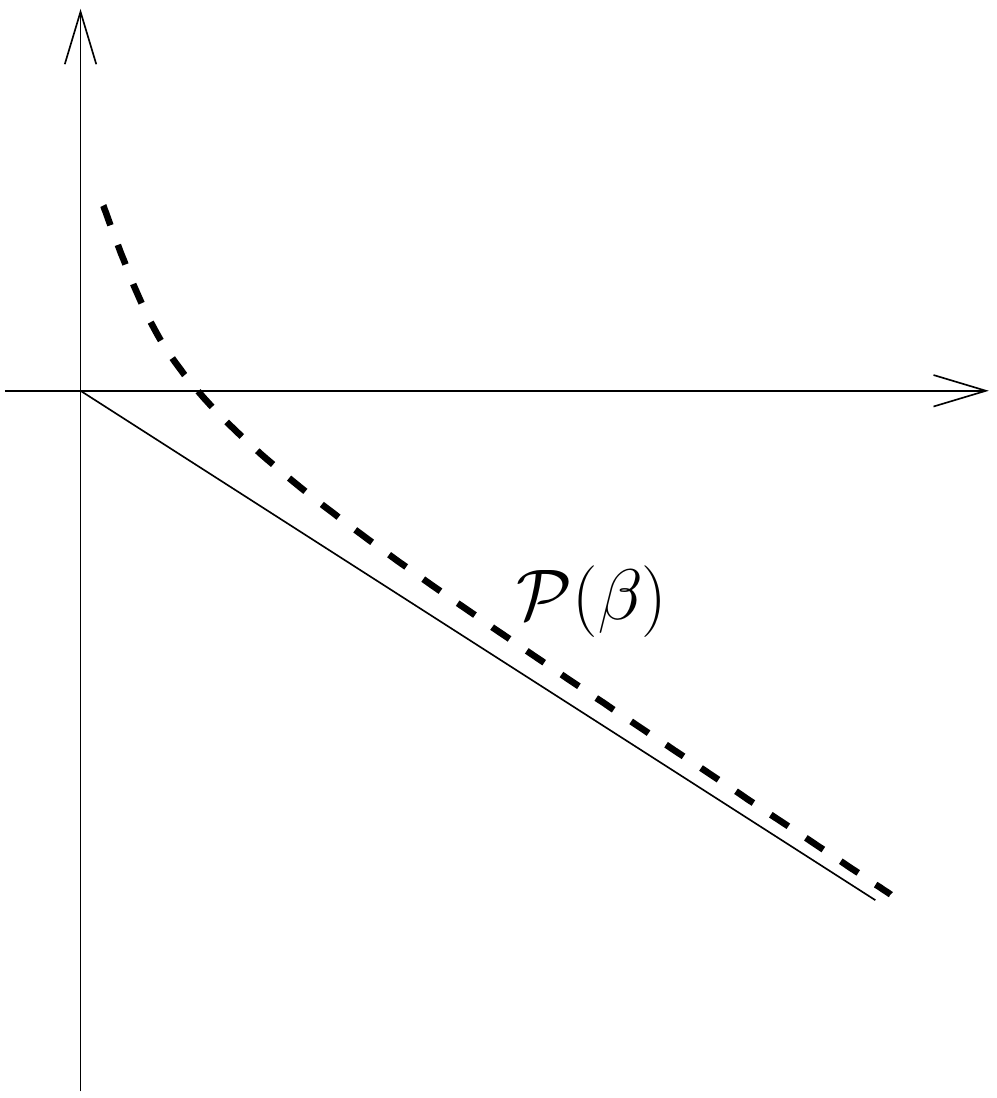}
\end{center}
\end{figure}

Now, $\be\mapsto \CP(\be)-\be m(A)$ has non-positive derivative, and, it is thus a decreasing function. It is non-negative too (as the previous series of inequalities shows), and then it admits a non-negative limit as $\be\to+\8$.

We have mentioned above $\disp\frac{d\CP}{d\be}(\be)=\int A\,d\mu_{\be}$. If we consider any accumulation point $\mu_{\8}$ for $\mu_{\be}$, $\disp\int A\,d\mu_{\be}$ converges to the slope of the asymptote, that is, to $m(A)$, and then, $\mu_{\8}$ is $A$-maximizing.
\end{proof}

We finish this subsection with an important remark.
We have seen the double-inequality \eqref{equ-def-gibbs}
$$e^{-C_{A}}\le \frac{\mu_{A}([x_{0}\ldots x_{n-1}])}{e^{S_{n}(A)(x)-n\log\lambda_{A}}}\le e^{C_{A}}.$$
We emphasize here that $C_{A}$ is proportional to $||A||_{\8}$. Therefore, replacing $A$ by $\be.A$, and letting $\be\to+\8$, this implies that $C_{\be.A}\to+\8$. In other words,  we will get that the constant explodes when $\be\to+\8$,.

Nevertheless we have:
\begin{proposition}
\label{prop-regulholde}
Assume that $A$ is $\al$-H\"older.
There exists a universal constant $C$ such that $\disp\frac1\be\log H_{\be.A}$ is $\al$-H\"older with norm bounded by $C. ||A||_{\al}$.
 \end{proposition}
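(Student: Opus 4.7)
The plan is to iterate the eigenfunction equation $\CL_{\be A}(H_{\be A}) = \lambda_{\be A} H_{\be A}$ and exploit the exponential contraction of preimages under the shift to obtain an oscillation bound on $\log H_{\be A}$ that is linear in $\be$ and depends only on $\|A\|_\al$. This is the natural approach because, as pointed out just before the statement, the Gibbs constant $C_{\be A}$ appearing in \eqref{equ-def-gibbs} blows up with $\be$, so that inequality cannot be used directly.

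First, I would iterate the eigenvalue equation to get, for every $n\ge 1$,
$$H_{\be A}(x) = \lambda_{\be A}^{-n} \sum_{|w|=n} e^{\be S_n(A)(wx)}\, H_{\be A}(wx),$$
where the sum runs over words $w$ of length $n$ and $wx$ denotes the concatenation. Given two points $x,x'$ with $d(x,x') = 2^{-m}$, I would pair preimages sharing the same prefix $w$ and estimate the distortion. Since, for $0\le k\le n-1$, one has $d(\s^k(wx),\s^k(wx')) = 2^{-(n-k)}d(x,x')$, the $\al$-H\"older hypothesis yields
$$|S_n(A)(wx) - S_n(A)(wx')| \le \|A\|_\al\, d(x,x')^\al \sum_{k=0}^{n-1} 2^{-\al(n-k)} \le K\,d(x,x')^\al,$$
with $K := \|A\|_\al/(2^\al-1)$, a bound uniform in both $w$ and $n$.

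Then, factoring $e^{\be S_n(A)(wx')}$ out of the sum defining $H_{\be A}(x)$, I would deduce
$$\frac{H_{\be A}(x)}{H_{\be A}(x')} \le e^{\be K d(x,x')^\al}\cdot \frac{\sum_{|w|=n} e^{\be S_n(A)(wx')}\,H_{\be A}(wx)}{\sum_{|w|=n} e^{\be S_n(A)(wx')}\,H_{\be A}(wx')}.$$
Letting $n\to+\8$, both $wx$ and $wx'$ live in the shrinking cylinder $[w]$; the function $H_{\be A}$ is continuous and strictly positive on the compact $\Om$, hence uniformly continuous and bounded below by a positive constant, so the ratio $H_{\be A}(wx)/H_{\be A}(wx')$ tends to $1$ uniformly in $w$ and the second factor tends to $1$. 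Exchanging the roles of $x$ and $x'$ then yields $|\log H_{\be A}(x) - \log H_{\be A}(x')| \le \be K d(x,x')^\al$, which, after division by $\be$, bounds the $\al$-H\"older seminorm of $\frac{1}{\be}\log H_{\be A}$ by $K$ uniformly in $\be$.

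To handle the sup-norm contribution to $\|\cdot\|_\al$, I would use the normalization $\int H_{\be A}\,d\nu_{\be A} = 1$: since $\nu_{\be A}$ is a probability, $H_{\be A}$ takes values both $\ge 1$ and $\le 1$, so there are points $x_1,x_2$ with $\log H_{\be A}(x_1) \le 0 \le \log H_{\be A}(x_2)$; combined with the oscillation estimate (using $d \le 1$) this forces $\|\frac{1}{\be}\log H_{\be A}\|_\8 \le K$ as well. Altogether the full H\"older norm is bounded by $2K = \frac{2}{2^\al-1}\|A\|_\al$, proving the claim with $C := 2/(2^\al-1)$. The main obstacle is precisely the uniformity in $\be$: any argument passing through \eqref{equ-def-gibbs} brings in a constant of order $\be$ that would destroy the bound. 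It is the geometric contraction of preimages — which makes the Birkhoff-sum distortion $|S_n(A)(wx) - S_n(A)(wx')|$ bounded independently of $n$ — that produces an oscillation bound for $\log H_{\be A}$ growing only linearly in $\be$.
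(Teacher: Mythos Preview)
The paper states this proposition without proof, so there is nothing to compare against. Your argument is correct and is the standard one: the key point is the bounded-distortion estimate $|S_{n}(A)(wx)-S_{n}(A)(wx')|\le K\,d(x,x')^{\al}$ uniformly in $n$ and $w$, which after iterating the eigenvalue relation and sending $n\to\8$ gives $|\log H_{\be A}(x)-\log H_{\be A}(x')|\le \be K\,d(x,x')^{\al}$. Your treatment of the sup-norm via the normalization $\int H_{\be A}\,d\nu_{\be A}=1$ is also fine.

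One minor remark: in step~5 you invoke the continuity and strict positivity of $H_{\be A}$ to pass to the limit. This is legitimate but somewhat indirect, since these qualitative facts depend on $\be$. A cleaner variant, which you essentially have in hand, is to observe that the ratio $H_{\be A}(x)/H_{\be A}(x')$ is bounded above by
\[
e^{\be K d(x,x')^{\al}}\cdot \max_{|w|=n}\frac{H_{\be A}(wx)}{H_{\be A}(wx')}
\le e^{\be K d(x,x')^{\al}}\cdot \sup_{d(u,v)\le 2^{-n}}\frac{H_{\be A}(u)}{H_{\be A}(v)},
\]
and the already-established H\"older seminorm bound (which you are proving) shows the last supremum is at most $e^{\be K 2^{-\al n}}$; letting $n\to\8$ closes the loop without appealing to an a priori lower bound on $H_{\be A}$. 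Either way the conclusion stands with $C=2/(2^{\al}-1)$.
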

 Consequently, ``at the scale '' $\disp\frac1\be\log$ we recover bounded quantities. We will elaborate more on this point in the future.

\subsection{An explicit computation for a particular case}
Let us now assume  that $A$ depends on two coordinates, that is
$$A(x_{0}x_{1}x_{2}\ldots)=A(x_{0},x_{1}).$$

\subsubsection{The exact form of the equilibrium state}
Here we show how we get an explicit expression for the equilibrium state using the Transfer Operator. Our computation does not prove that the measure we construct is an equilibrium state, and  neither proves uniqueness of the equilibrium state. However, the example gives a good intuition of the main issues on this class of problems.

We denote by $A(i,j)$ the value of $A$ in the cylinder $[ij]$, $i, j\in\{1,2,..,d\}.$
In this case, the Transfer operator  takes a simple form:

$$
\CL_{A}(\phi)(x_0x_1x_2\ldots)=\sum_{a \in \{1,2,...,d\}}
e^{ A(ax_0)}\,
\phi(ax_0x_1x_2\ldots).
$$

Let $M$ be the matrix will all positive entries given by $M_{i,j}=e^{A(i,j)}$.

\begin{lemma}
\label{lem-spectradiuCLA}
The spectral radius of $\CL_{A}$ is also the spectral radius of $M$.
\end{lemma}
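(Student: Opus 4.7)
The plan is to exploit the fact that when $A$ depends only on the first two coordinates, the finite--dimensional subspace
$$V := \{\phi\in \CC^0(\Om)\,:\, \phi(x)=\phi(x_0)\text{ depends only on }x_0\}$$
is invariant under $\CL_A$. Indeed, if $\phi\in V$, then
$$\CL_A(\phi)(x_0x_1\ldots)=\sum_{a=1}^{d}e^{A(a,x_0)}\phi(a)=\sum_{a=1}^{d}M_{a,x_0}\phi(a),$$
which still depends only on $x_0$, so $\CL_A(V)\subset V$. Identifying a function $\phi\in V$ with the vector $(\phi(1),\ldots,\phi(d))^{T}\in\R^{d}$, the formula above shows that $\CL_A|_V$ is represented by the matrix $M^{T}$. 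Since a matrix and its transpose have the same spectral radius, this already yields
$$\rho(M)=\rho(M^{T})=\rho(\CL_A|_V)\le \rho(\CL_A),$$
where the last inequality uses $\|\CL_A^{n}|_V\|\le \|\CL_A^{n}\|$ for all $n$ together with Gelfand's formula.

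For the reverse inequality I would exploit the positivity of $\CL_A$: since $e^{A(a,x_0)}>0$, one has $|\CL_A^{n}\phi(x)|\le \|\phi\|_{\infty}\,\CL_A^{n}(\BBone)(x)$ pointwise, and hence
$$\|\CL_A^{n}\|_{\mathrm{op}}\;=\;\|\CL_A^{n}(\BBone)\|_{\infty}.$$
Now $\BBone\in V$, so by invariance $\CL_A^{n}(\BBone)\in V$ and corresponds to the vector $(M^{T})^{n}\mathbf{1}$, where $\mathbf{1}=(1,\ldots,1)^{T}$. Consequently
$$\|\CL_A^{n}(\BBone)\|_{\infty}=\|(M^{T})^{n}\mathbf{1}\|_{\infty}\le \|(M^{T})^{n}\|\cdot \|\mathbf{1}\|_{\infty}=\|(M^{T})^{n}\|,$$
and Gelfand's formula gives $\rho(\CL_A)\le \rho(M^{T})=\rho(M)$, completing the proof.

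There is no real obstacle here; the argument is almost bookkeeping. The only point worth emphasising, and which is genuinely the crux, is that the invariance of the finite--dimensional subspace $V$ relies in an essential way on $A$ depending on only two coordinates: an induction on $n$ using the explicit formula
$$\CL_A^{n}(\phi)(x)=\sum_{a_1,\ldots,a_n}M_{a_1,x_0}M_{a_2,a_1}\cdots M_{a_n,a_{n-1}}\,\phi(a_n\ldots a_1x)$$
shows that iterates of $\CL_A$ cannot introduce any dependence of the "kernel" on deeper coordinates of $x$, which is exactly what makes the reduction to the finite--dimensional matrix $M$ possible.
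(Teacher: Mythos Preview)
Your proof is correct and follows essentially the same route as the paper: both identify the invariant subspace of functions depending only on the first coordinate, note that $\CL_A$ restricted to it is given by (the transpose of) $M$, and then use positivity of $\CL_A$ to reduce $|||\CL_A^n|||$ to $\|\CL_A^n(\BBone)\|_\infty$, which lies in that invariant subspace. The paper's write-up is terser, but the ideas and the two inequalities are obtained in the same way.
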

\begin{proof}
Assume that $\phi$ is a function depending only on one coordinate, \ie
$$\phi(x_{0}x_{1}x_{2}\ldots)=\phi(x_{0}).$$
Then, by abuse of notation, the function $\phi$ is described by the vector
$(\phi(1), \phi(2),..., \phi(d))$. For every $j$
$$\CL_{A}(\phi)(j)=\sum_{j=1}^{s} M_{ij}.\phi(i),$$
which can be written as the matrix $M$ acting on the line vector $\phi$, that is, $\CL_{A}(\phi)=\phi\, M.$  In an alternative way we can also write $\CL_{A}(\phi)=M^{*}.\phi$, where $M^*$ denotes de transpose of the matrix $M$. This yields that the spectral radius $\l_{M}$ of $M$ is lower or equal to $\l_{A}$.

We remind that the spectral radius is given by
$$\l_{A}:=\limsup_{\ninf}\frac1n\log|||\CL_{A}^{n}|||,\text{ and }|||\CL_{A}^{n}|||=\sup_{||\psi||=1}||\CL^{n}(\psi)||_{\8}.$$
The operator $\CL_{A}$ is positive and this shows that for every $n$, $\disp |||\CL_{A}^{n}|||=||\CL_{A}^{n}(\BBone)||_{\8}$.
Now, $\BBone$ depends only on 1 coordinate, which then shows $\CL_{A}^{n}(\BBone)=M^{n}(\BBone)$. This yields $\l_{A}\le \l_{M}$.
\end{proof}

\begin{theorem}\label{perron}
{\bf (Perron-Frobenius)}  Suppose $B=(b_{ij})$ is a $d\times d$ matrix
with all entries strictly positive, $1\leq i,j\leq d$. Then,
there exists $\lambda >0$, and, vectors $\mathbf{l}=(l_1,\cdots,l_d)$ and
$\mathbf{r}=(r_1,\cdots,r_d)$ such that

\begin{enumerate}
\item  for all $i$  $l_i>0$, and  $r_i>0$.
\item For all $i$ $\sum_{j=1}^d b_{ij}r_j=\lambda r_i$ and for all $j$
$ \sum_{i=1}^d l_i b_{ij}=\lambda l_j $.
\end{enumerate}
(i.e., $\mathbf{r}$ is a right eigenvector of  $B$ and  $\mathbf{l}$ is a left eigenvector for $B$).
\end{theorem}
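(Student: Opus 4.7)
The plan is to apply Brouwer's fixed point theorem to a suitable map on the probability simplex $\Delta := \{x \in \mathbb{R}^d : x_i \geq 0,\ \sum_i x_i = 1\}$. First I would define $T: \Delta \to \Delta$ by $T(x) = Bx/\|Bx\|_1$, where $\|\cdot\|_1$ is the sum of coordinates. Since every entry $b_{ij}$ of $B$ is strictly positive, for any $x \in \Delta$ the vector $Bx$ has strictly positive coordinates, so $\|Bx\|_1 > 0$ and $T$ is continuous and well-defined. Moreover $T(x)$ itself has strictly positive coordinates, so $T(\Delta)$ is contained in the relative interior of $\Delta$.

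Next, Brouwer's fixed point theorem, applied to the compact convex set $\Delta$ and the continuous self-map $T$, produces $\mathbf{r} \in \Delta$ with $T(\mathbf{r}) = \mathbf{r}$. Unfolding the definition yields $B\mathbf{r} = \lambda \mathbf{r}$ with $\lambda := \|B\mathbf{r}\|_1 > 0$. Because $\mathbf{r} = T(\mathbf{r})$ lies in the relative interior of $\Delta$, all coordinates $r_i$ are strictly positive, giving item (1) of the conclusion together with the right-eigenvector half of item (2). Applying the same argument to the transpose $B^T$, which also has strictly positive entries, yields a vector $\mathbf{l}$ with strictly positive coordinates and some $\lambda' > 0$ satisfying $B^T \mathbf{l} = \lambda' \mathbf{l}$; equivalently $\sum_i l_i b_{ij} = \lambda' l_j$, so $\mathbf{l}$ is a positive left eigenvector of $B$ with eigenvalue $\lambda'$.

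The last point is to verify that $\lambda$ and $\lambda'$ coincide. For this, I compute the scalar $\mathbf{l}^T B \mathbf{r}$ in two ways: on the one hand $\mathbf{l}^T(B\mathbf{r}) = \lambda\, \mathbf{l}^T \mathbf{r}$, on the other hand $(\mathbf{l}^T B)\mathbf{r} = \lambda'\, \mathbf{l}^T \mathbf{r}$. Since both $\mathbf{l}$ and $\mathbf{r}$ have strictly positive coordinates, $\mathbf{l}^T \mathbf{r} > 0$, and cancelling this factor gives $\lambda = \lambda'$, as required.

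The main obstacle is rather minor: reconciling the eigenvalues produced by two independent fixed-point arguments. The bilinear-form trick above, relying essentially on the strict positivity of $\mathbf{l}$ and $\mathbf{r}$, settles this cleanly. Note that the statement does not claim simplicity of $\lambda$, nor that $\lambda$ equals the spectral radius, nor any uniqueness of the eigenvectors; those stronger conclusions would require finer tools such as the Birkhoff--Hilbert projective contraction argument, but are not needed here.
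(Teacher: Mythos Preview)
Your proof is correct and follows essentially the same route as the paper: normalize $Bx$ to lie in the simplex, apply Brouwer's fixed point theorem to obtain a strictly positive right eigenvector, repeat with the transpose for the left eigenvector, and then equate the two eigenvalues via the bilinear pairing $\mathbf{l}^{T}B\mathbf{r}$ using that $\mathbf{l}^{T}\mathbf{r}>0$. The paper's map $G$ and your map $T$ are identical, as are the closing arguments.
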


\begin{proof}
We first show  that there exists at least one vector  $\mathbf{r}$ with all coordinates positive, and $\lambda\geq
0$, such that
$\disp \sum_{j=1}^d b_{ij}r_j=\lambda r_i$.

 Consider the convex set  $\mathcal{H}$ of vectors
$h=(h_1,\cdots,h_d)$ such that $h_i\geq 0$, $1\leq i\leq d$ and
$\sum_{i=1}^{d}h_i=1$. The matrix $B$ determines a continuous transformation
$G:\mathcal{H}\rightarrow\mathcal{H}$, given by
$Gh=h'$, where
\begin{displaymath}
h'_i=\frac{\sum_{j=1}^{d}b_{ij}h_j}{\sum_{i=1}^{d}\sum_{j=1}^{n}b_{ij}h_j}
\end{displaymath}

Note that $h'$ has all entries strictly positive. In this way the image of $\mathcal{H}$ by $G$ is strictly inside
$\mathcal{H}$.

The Brouwer fixed point (see cap VII \cite{Lima}) assures that
there exists at least on fixed point.

As the image of $\mathcal{H}$ by $G$ is strictly inside
$\mathcal{H}$ we have that the fixed point has all entries positive.

If $\mathbf{r}$ is such fixed point, that is,
$G(\mathbf{r})=\mathbf{r}$, which means,
\begin{displaymath}
r_i=\frac{\sum_{j=1}^{d}b_{ij}r_j}{\sum_{i=1}^{n}\sum_{j=1}^{d}b_{ij}r_j}
\end{displaymath}

Taking $\lambda=\sum_{i=1}^{d}\sum_{j=1}^{d}b_{ij}r_j$, we get the right eigenvector.

Considering $B^*$ (the transpose of $B$) instead of $B$, we get  another eigenvector $\mathbf{l}$.
Note that $\langle \mathbf{r},\mathbf{l}\rangle$ is a positive number.
Then,
$$\lambda\langle \mathbf{r},\mathbf{l}\rangle=\langle B\mathbf{r},\mathbf{l}\rangle= \langle
\mathbf{r},B^*\mathbf{l}\rangle=\lambda^*\langle \mathbf{r},\mathbf{l}\rangle
$$
and $\lambda=\lambda^*$.
\end{proof}

\begin{remark}
\label{rem-finerperron}
\begin{enumerate}
\item Actually, it is possible to prove more. The eigenvalue $\lambda$ is the spectral radius of the matrix $B$ and $\mathbf{r}$ and $\mathbf{l}$ are simple eigenvectors.
\item We have seen  that in the case $A$ depends only on 2 coordinates,  the Transfer operator acts as a matrix. In the general case, Theorem \ref{theo-trouspect} extends   Theorem \ref{perron}.
\item A line stochastic matrix (see  Subsec. \ref{subsec-examp-meas}) has spectral radius equal to 1 and satisfies hypotheses of Theorem \ref{perron}. It thus admits a left-eigenvector associated to the eigenvalue 1.
\end{enumerate}
$\blacksquare$\end{remark}

\bigskip
Let us consider the matrix $d\times d$ matrix $M$ such that for each entry $M_{i,j}= e^{A(i,j)}.$
By Theorem \ref{perron}, there exist left and right eigenvectors $\mathbf{l}=(l_{1},\ldots l_{d})$ and $\mathbf{r}=(r_{1},\ldots r_{d})$, both associated to the spectral radius of $\CL_{A}$ (due to Lemma \ref{lem-spectradiuCLA}), say $\l_{A}$.

Let us define the  $2\times 2$ matrix $P_{A}=P_{A}(i,j)$ with
\begin{equation}
\label{equ-stochmatlocconst}
P_{A}(i,j)=\frac{e^{A(i,j)}r_{j}}{\lambda_{A}r_{i}}.
\end{equation}
Note that $P_{A}$ is a line stochastic matrix:
$$\sum_{j}P_{A}(i,j)=\disp\frac{\sum_{j}e^{A(i,j)r_{j}}}{\l_{A}r_{i}}=1.$$

Let us fix the normalization  in the  following way: $\sum_{j}r_{j}=1$ and $\sum_{i}l_{i}r_{i}=1$. Then, the vector $\pi=(\pi_{1},\ldots \pi_{d})$ defined by $\pi_{i}=l_{i}r_{i}$ satisfies
$$\pi. P_{A}=\pi.$$

That is, such $\pi$ is the stationary vector for $P_A$.

The right-eigenvector $\mathbf{r}$ has to be seen as the eigenmeasure $\nu_{A}$ from Theorem \ref{th-eigenmeas}, and actually $r_{j}=\nu_{A}([j])$. The normalized left-eigenvector $l$ has to be seen as the eigenfunction $H_{A}$ from Theorem \ref{theo-trouspect}, with normalization $\disp\int H_{A}\,d\nu_{A}=1$.

The associated invariant Markov measure $\mu_{A}$ is defined by
$$\mu_{A}([x_{0}\ldots x_{n-1}])=\pi_{x_{0}}P_{A}(x_{0},x_{1})\ldots P_{A}(x_{n-2},x_{n-1}).$$
The exact computation yields
$$\disp \mu_{A}([x_{0}\ldots x_{n-1}])=\phi_{A}(x_{0})e^{S_{n}(A)(x)-n\log\l_{A}}.\nu_{A}([x_{n-1}]).$$

Since $\nu$  and $\phi_{A}$ have positive entries, this shows that $\mu_{A}$ is a Gibbs measure.

Things can be summarized as follows:

\begin{center}
 \begin{tabular}{|p{10cm}|}
\hline
Let $M=(M_{ij})$ be the matrix with entries $e^{A(i,j)}$. Let $\mathbf{r}=(r_{1},\ldots r_{d})$ be the right-eigenvector associated to $\lambda$ with normalization $\sum r_{i}=1$. Let $\mathbf{l}=(l_{1},\ldots, l_{d})$ be the left-eigenvector for $\lambda$ with renormalization  $\sum l_{i}r_{i}$=1. Then, $\mathbf{r}$ gives the eigenmeasure  $\nu_{A}$, and $\mathbf{l}$ gives the density $H_{A}$. \\
The Gibbs measure of the cylinder $[x_{0}\ldots x_{n-1}]$ is
$\mu_{A}([x_{0}\ldots x_{n-1}])=l_{x_{0}}e^{S_{n}(A)(x)-n\log\,\lambda}r_{x_{n-1}}$\\
\hline
\end{tabular}
\end{center}

\begin{remark}
\label{rem-mesmaxentro}
Note that in the case $A\equiv0$, the measure $\mu_A$ is the Markov probability associated to the line stochastic matrix
with all entries equal to $\frac{1}{d}.$ We denote by the probability $\mu_{top}$ the  measure of maximal entropy.
It is the unique invariant probability with entropy  $\log d$.
$\blacksquare$\end{remark}

\subsubsection{About uniqueness}
This subsection can be avoid in a first lecture. The reasoning bellow can be seen as the description of a simple computable way  to get an equilibrium state, and, moreover, to show that it is the unique one.

First, we remind
$$\mu_{A}([i_{0}\ldots i_{n-1}])=l_{i_{0}}e^{S_{n}(A)(x)-n\log\l_{A}}r_{i_{n-1}}.$$
Taking the logarithm of this equality and dividing by $n$, we get
$$\frac1n\log\mu_{A}([x_{0}\ldots x_{n-1}])=\frac1n S_{n}(A)(x)-\log\l_{A}+\frac1n(\log l_{x_{0}}+\log r_{x_{n-1}}).$$
If $n\to+\8$, the left hand side term goes to $-h_{\mu_{A}}$ (for $\mu$ generic $x$). The first summand in the right hand side goes to $\disp\int A\,d\mu_{A}$, by Birkhoff theorem. The last summand goes to zero because $l_{x_{0}}$ is a constant and $\mathbf{r}$ has all its entries bound away from 0.  This shows that
$$h_{\mu_{A}}+\int A\,d\mu_{A}=\log\l_{A}.$$

We will now show that for every Markov measure $\mu$
\begin{equation}
\label{equ-pressuremarkov}
h(\mu)  +\int A d\mu \leq \lambda(A).
\end{equation}

This will show that  the probability $\mu_A$  realizes the supremum of the free energy among the Markov measures (see \cite{Spit}).  This is a restrictive result (with respect to the initial problem of maximization of the free energy among "all" invariant probabilities) but this gives a hint of the
main issues and ideas we are interested here. Actually, the key point is the relative entropy of two measures (see \cite{krengel}).

We need the next lemma whose proof can be obtained in \cite{PY}.
\begin{lemma}\label{lem-techpiqi}
If $p_1,\cdots,p_d$ and $q_1,\cdots,q_d$, satisfy\footnote{By convention $0\log{0}=0$.}
$$\sum_{j=1}^d p_i=\sum_{j=1}^d  q_j=1,$$
with $p_i \geq 0$, $q_i \geq 0$, $i=1,\cdots,d$,
then,
$$\sum_{i=1}^d q_i\log{q_i} - \sum_{i=1}^d q_i\log{p_i}=\sum_{i=1}^d q_i\log{\frac{q_i}{p_i}} \geq 0,
$$
and the equality holds, if and only if, $p_i=q_i$, $i=1\cdots,d$.
\end{lemma}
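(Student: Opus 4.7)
The plan is to derive this from the elementary one-variable inequality
\[
\log t \le t-1 \qquad \text{for all } t>0,
\]
with equality if and only if $t=1$. This inequality is standard and follows, for instance, by studying the function $g(t)=t-1-\log t$, whose derivative $g'(t)=1-1/t$ vanishes only at $t=1$, giving a strict global minimum of $g(1)=0$. Equivalently we could invoke strict concavity of $\log$ via Jensen's inequality, but the pointwise version is cleaner for handling the boundary cases.

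First I would dispose of the degenerate terms using the convention $0\log 0=0$. Let $I=\{i : q_i>0\}$. Terms with $q_i=0$ contribute $0$ to $\sum_i q_i\log(q_i/p_i)$ regardless of $p_i$ (with the convention $0\log(0/p_i)=0$). If there exists some $i\in I$ with $p_i=0$, then $q_i\log(q_i/p_i)=+\infty$ by convention, and since all other terms are bounded below, the left-hand side is $+\infty\geq 0$, and $p\ne q$, so the claim holds. Hence I may assume $p_i>0$ for every $i\in I$.

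Now I would apply the basic inequality with $t=p_i/q_i$ for $i\in I$, obtaining
\[
\log\frac{p_i}{q_i}\le \frac{p_i}{q_i}-1,
\]
so that $q_i\log(p_i/q_i)\le p_i-q_i$. Summing over $i\in I$, and using $\sum_{i\in I}p_i\le \sum_{i=1}^d p_i=1=\sum_{i\in I}q_i$,
\[
\sum_{i\in I} q_i\log\frac{p_i}{q_i}\le \sum_{i\in I}(p_i-q_i)= \sum_{i\in I}p_i - 1 \le 0.
\]
Multiplying by $-1$ and reinserting the (zero) contributions from $i\notin I$ yields
\[
\sum_{i=1}^d q_i\log\frac{q_i}{p_i}\ge 0,
\]
which, after splitting the logarithm, is exactly the stated inequality.

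For the equality case, observe that $q_i\log(p_i/q_i)=p_i-q_i$ for every $i\in I$ forces $p_i/q_i=1$, i.e.\ $p_i=q_i$ for every $i\in I$. This in particular gives $\sum_{i\in I}p_i=\sum_{i\in I}q_i=1$, so $p_i=0$ for $i\notin I$, which together with $q_i=0$ for $i\notin I$ gives $p_i=q_i$ for all $i$. The converse is trivial. The main point requiring care is simply the bookkeeping of the $p_i=0$ and $q_i=0$ boundary cases; the core inequality $\log t\le t-1$ carries the whole argument.
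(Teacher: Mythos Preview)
Your proof is correct and is the standard argument via the elementary inequality $\log t\le t-1$. Note, however, that the paper does not actually prove this lemma: it simply states it and refers the reader to \cite{PY} for a proof, so there is no ``paper's own proof'' to compare against. Your treatment of the boundary cases ($q_i=0$ and $p_i=0$) is careful and complete, and the equality analysis is fine; the only implicit step is that overall equality forces both chained inequalities to be equalities, which in turn forces termwise equality---but this is clear.
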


Consider $\mu$ a Markov measure.
We fix
the vector $\pi=(\pi_1,...,\pi_d)$,  and the matrix $P$,  satisfying
$\pi \, P = \pi$ (which defines this Markov measure $\mu$).

Equation \eqref{equ-stochmatlocconst} yields $A(i,j)=\log P_{A}(i,j)+\log\l_{A} +\log r_{i}-\log r_{j}$.
By abuse of notation we set $r_{i}=r(x)$ if $x\in [i]$. Then, we get
\begin{eqnarray*}
h_{\mu}+\int A\,d\mu&=&-\sum_{i}\pi_{i}P_{ij}\log P_{ij}+\int \log P_{A}(x_{0},x_{1})\\
&&+\log\l_{A}+\int \log r-\log r\circ\s\,d\mu\\
&=& -\sum_{i,j}\pi_{i}P_{ij}\log P_{ij}+\sum_{i,j}\pi_{i}P_{ij}\log P_{A}(i,j)+\log\l_{A}\\
&=& -\sum_{i}\pi_{i}\sum_{j}P_{ij}\log P_{ij}-P_{ij}\log P_{A}(i,j)+\log\l_{A}\\
&\le & \log \l_{A},
\end{eqnarray*}
by Lemma \ref{lem-techpiqi}. Then, \eqref{equ-pressuremarkov} holds.

\chapter{Maximizing measures and Ground states}\label{chap-maxmeas}

\section{Selection at temperature zero}

\subsection{The main questions}

We remind Definition \ref{def-meas-max} which address  to the concept  of $A$-maximizing measure: it is a $\s$-invariant probability $\mu$ such that
$$\int A\,d\mu=\max_{\nu\in\CM_{\s}}\int A\,d\nu.$$
Existence of maximizing probabilities follows from the compactness of $\CM_{\s}$
(and the continuity of the map $\nu \mapsto \int A d \nu$).

The first kind of problems we are interested
in are related to maximizing measures. We can for instance address the questions:
\begin{enumerate}
\item For a given potential $A$, how large is the set of maximizing measures ?
\item How can we construct/get maximizing measures ?
\item For a given maximizing measure, what we can say about its support ?
\end{enumerate}

We have already mentioned above the relation between maximizing measures and equilibrium states (see Proposition \ref{prop-asymptopress}). Assume that $A$ is H\"older continuous, then, any accumulation point for $\mu_{\be}$ is a $A$-maximizing measure. We point out that, for  simplicity, from now on, we replace all the subscribed $\be. A$  by $\be$.

The relation between equilibrium states and maximizing measures motivates a new definition:
\begin{definition}
\label{def-groundstate}
Let $A$ be H\"older continuous.
A $\s$-invariant probability measure $\mu$ is called a {\em ground state} (for $A$) if it is an accumulation point for $\mu_{\be}$, as $\be$ goes to $+\8$.
\end{definition}
\begin{remark}
\label{rem-groundstate}
The H\"older continuity of $A$ is only required to get uniqueness for the equilibrium state for $\be.A$.
$\blacksquare$\end{remark}

After last chapter, clearly a ground state is a $A$-maximizing measure, but {\em a priori} a maximizing measure is not necessarily a ground state. The natural questions we can address are then:
\begin{enumerate}
\item Is there convergence of $\mu_{\be}$, as $\be$ goes to $+\8$ ?
\item If there is convergence, how does the family of measures $\mu_{\be}$ select the limit ?
\end{enumerate}

If  the limit for $\mu_{\be}$, as $\be$ goes to $+\8$, exists,  we say that $\mu=\lim_{\binf}\mu_{\be}$ is {\bf selected
when temperature goes to zero}.

We remind that if $\mu_{\8,1}$ and $\mu_{\8,2}$ are two invariant maximizing probabilities, then any convex combinations of these two measures is also a maximizing measure. This gives a hint of how difficult the problem of selection is. There are too many possibilities.

\bigskip
In case of convergence,
one natural question is then to study the speed of convergence for $\mu_{\be}$. More precisely, if $C$ is a cylinder such that $C\cap\CA=\emptyset$, then $\disp\lim_{\be\to+\8}\mu_{\be}(C)=0$ (because any possible accumulation point gives 0-measure to $C$). It is thus natural to study the possible limit:
$$\lim_{\be\to+\8}\frac1\be\log(\mu_{\be}(C)).$$

We will see that this question  is related to the behavior of $\disp\frac1\be\log H_{\be}$. Then we address the question:
\begin{enumerate}
\item Is there convergence of the family $\disp\frac1\be\log H_{\be}$, as $\be$ goes to $+\8$?
\item If there is convergence, how does the family of functions $ \disp\frac1\be\log H_{\be}$ select the limit ?
\end{enumerate}

In the case $V= \lim_{\beta \to \infty} \disp\frac1\be\log H_{\be}$ we say that {\bf $V$ was selected when temperature goes to zero}.

\subsection{A naive computation. The role of subactions}

We explore here
a different but related question:
consider $A:\Om\to\R$ Lipschitz  and $\be>0$. The transfer operator yields for every $x$,
\begin{equation}
\label{equ-opera-hbeta}
e^{\CP(\be)}H_{\be}(x)=\sum_{i=1}^{d}e^{\be.A(ix)}H_{\be}(ix),
\end{equation}
where all the subscribed $\be. A$ have been replaced by $\be$ for  simplicity. $H_{\be}$ denotes the main eigenfunction (properly normalized) for
$\be A$.

We have seen that, if $\be$ goes to $+\8$, there is control on the Lipschitz (or, H\"older)  constant, more precisely,  this control exists at the $\disp\frac1\be\log$-scale.

Actually, Proposition \ref{prop-regulholde} shows that the $\disp\frac1\be\log H_{\be}$ form an equicontinuous family. There are different choices for normalizing the family of eigenfunction of the Ruelle operator in order to get that the family is bounded (independent of $\beta$), but we will not elaborate on this in the moment.  We can consider an accumulation point of this sequence, say $V$, and we can show  it is Lipschitz continuous. For simplicity we keep writing $\be\to+\8$, even if we actually consider subsequences.
Then, taking $\disp\frac1\be\log$ of \eqref{equ-opera-hbeta}, considering $\be\to+\8$, equality \eqref{equ-slopepress} (and some more effort) implies
\begin{equation}
\label{equ1-subcohomo}
m(A)+V(x)=\max_{i}\left\{V(ix)+A(ix)\right\}.
\end{equation}

\bigskip

We will show in a moment  a proof of this result in a particular case

\bigskip
\begin{definition}\label{def-sub}
A continuous function $u: \Omega \to \mathbb{R} $ is called a
{\em calibrated subaction} for $A:\Omega\to \mathbb{R}$, if for any $x\in \Omega$, we have

\begin{equation}\label{c} u(x)=\max_{\sigma(y)=x} [A(y)+ u(y)-m(A)].\end{equation}

\end{definition}

We will show the proof of a particular case of the general result we mentioned above.

\begin{theorem}
Assume  $A$ is a potential which depends on two coordinates.
Suppose $\beta_n$ is such that for each $j$
$$ \lim_{\beta_n \to \infty} \frac{1}{\beta_n} \log l^{\beta_n\,A}(j)= V(j).$$

Then, $V$ is a calibrated subaction.
\end{theorem}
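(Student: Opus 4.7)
The plan is to exploit the fact that, since $A$ depends on two coordinates, we have an explicit left-eigenvector equation at our disposal. Let $M_\beta$ denote the $d\times d$ matrix with entries $(M_\beta)_{ij} = e^{\beta A(i,j)}$, and let $\lambda(\beta A)$ be its Perron eigenvalue; by Lemma \ref{lem-spectradiuCLA} this coincides with the spectral radius of the transfer operator $\mathcal{L}_{\beta A}$. The left-eigenvector relation $\mathbf{l}^{\beta A} M_\beta = \lambda(\beta A)\, \mathbf{l}^{\beta A}$ reads coordinate by coordinate
\begin{equation}\label{eq-eigen-comp-left}
\sum_{i=1}^{d} l^{\beta A}(i)\, e^{\beta A(i,j)} \;=\; \lambda(\beta A)\, l^{\beta A}(j), \qquad j\in\{1,\ldots,d\}.
\end{equation}

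I would then take $\frac{1}{\beta_n}\log$ on both sides of \eqref{eq-eigen-comp-left} and pass to the limit. On the right hand side, the hypothesis gives $\frac{1}{\beta_n}\log l^{\beta_n A}(j) \to V(j)$, while Proposition \ref{prop-asymptopress} applied to $\mathcal{P}(\beta_n A)=\log \lambda(\beta_n A)$ gives $\frac{1}{\beta_n}\log\lambda(\beta_n A)\to m(A)$. On the left hand side, setting $a_i(\beta_n) := A(i,j) + \frac{1}{\beta_n}\log l^{\beta_n A}(i)$, the elementary two-sided Laplace bound
\begin{equation*}
\max_{i} a_i(\beta_n) \;\le\; \frac{1}{\beta_n}\log \sum_{i=1}^{d} e^{\beta_n a_i(\beta_n)} \;\le\; \max_{i} a_i(\beta_n) + \frac{\log d}{\beta_n}
\end{equation*}
forces the LHS to converge to $\max_{i\in\{1,\ldots,d\}} \bigl[A(i,j)+V(i)\bigr]$. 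Combining the two limits yields, for every $j$,
\begin{equation}\label{eq-subact-digits-left}
m(A) + V(j) \;=\; \max_{i\in\{1,\ldots,d\}} \bigl[\, A(i,j) + V(i)\, \bigr].
\end{equation}

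To conclude, I would view $V$ as a function on $\Omega$ depending only on the first digit, by setting $V(x_0 x_1 \ldots) := V(x_0)$; such a function is automatically continuous on $\Omega$. For any $x = x_0 x_1 \ldots \in \Omega$, the $\sigma$-preimages are exactly the words $ix$ with $i\in\{1,\ldots,d\}$, and because $A$ depends only on two coordinates one has $A(ix) = A(i,x_0)$ and of course $V(ix)=V(i)$. Substituting $j=x_0$ in \eqref{eq-subact-digits-left} and rearranging yields
\begin{equation*}
V(x) \;=\; \max_{\sigma(y)=x} \bigl[\, A(y) + V(y) - m(A)\, \bigr],
\end{equation*}
which is exactly the defining equation \eqref{c} of a calibrated subaction.

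The only non-routine step is the commutation of $\frac{1}{\beta_n}\log$ with the finite sum in \eqref{eq-eigen-comp-left}; this is where the two-sided Laplace bound is essential, and it is clean here precisely because the sum has only $d$ terms (so no uniform control on a continuous family is needed). Everything else is bookkeeping: the spectral-radius asymptotics of Proposition \ref{prop-asymptopress} and the translation of the digit-level identity \eqref{eq-subact-digits-left} into the calibrated subaction equation on $\Omega$.
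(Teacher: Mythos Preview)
Your proof is correct and follows essentially the same approach as the paper: both start from the left-eigenvector equation $\sum_i l^{\beta A}(i)e^{\beta A(i,j)}=\lambda(\beta A)\,l^{\beta A}(j)$, take $\frac{1}{\beta_n}\log$, and pass to the limit using $\frac{1}{\beta_n}\log\lambda(\beta_n A)\to m(A)$. The only difference is presentational: you invoke the two-sided Laplace bound to obtain both inequalities simultaneously, whereas the paper splits the argument, first bounding the sum below by $d$ times its maximal term to get $V(j)+m(A)\le\max_i\{A(i,j)+V(i)\}$, and then arguing by contradiction (a single term exceeding the sum, which equals $1$) for the reverse inequality.
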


To get this result we
consider the eigenvalue equation: for each $\beta$ and for each $1\leq j\leq d$, we have
$$\sum_{i=1}^d \frac{l_i^{\beta A} \,e^{\beta\, A(i,j)}}{ \lambda(\beta\, A) \,\,l_j^{\beta A}}=1 .$$

For each $j$ and  $\beta_n$, there exist a $i_n=i_n^j$, such that, $\frac{l_{i_n}^{\beta A} \,e^{\beta\, A(i_n,j)}}{ \lambda(\beta\, A) \,\,l_j^{\beta A}}$ attains
the maximal possible value among the $i\in\{1,2.,,.d\}$.

For a fixed $j$ there exists a $i_j\in\{1,2.,,.d\}$ such that the value $i_n^j$ is attained an infinite number of times.

\bigskip

Remember that
$\lim_{\beta \to \infty} \frac{P(\beta\, A)}{\beta}= \lim_{\beta \to \infty} \frac{\log (\lambda(\beta\, A))}{\beta}=m(A).$

\bigskip
Therefore,

$$0=\frac{1}{\beta_n} \, \log (\sum_{i=1}^d \frac{l_i^{\beta_n A} \,e^{\beta_n\, A(i,j)}}{ \lambda(\beta_n\, A) \,\,l_j^{\beta_n A}}) \leq$$
$$\frac{1}{\beta_n} \, \log (d \,\frac{l_{i_j}^{\beta_n A} \,e^{\beta_n\, A(i_j,j)}}{ \lambda(\beta_n\, A) \,\,l_j^{\beta_n A}}) \leq$$
$$ A(i_j,j) +  \frac{1}{\beta_n} \log l^{\beta_n\,A}(i_j)-  \frac{1}{\beta_n} \log l^{\beta_n\,A}(j)- \frac{1}{\beta_n} \log \lambda( \beta_n A).$$

Now, taking limit in the above expression, when $n\to \infty$, we get for each fixed $j$
$$0\leq A(i_j,j) +V(i_j) - V(j) - m(A).$$

In this way, for each $j$ we get
$$V(j)+ \,m(A)\leq \max_{i \in \{1,2,...,d\}} \{A(i,j)+  V (i)  \}\, .$$

In the case, for a given $j$ there exists $i$ and $\epsilon>0$, such that,
$$V(j)+ \,m(A)+  \epsilon<A(i,j)+  V (i) \, ,$$

then,

$$1< \frac{l_i^{\beta_n A} \,e^{\beta_n\, A(i,j)}}{ \lambda(\beta_n\, A) \,\,l_j^{\beta_n A}},$$
for some large $n$. But, this is a contradiction.

\qed

In the case the maximizing probability for $A$ is unique, then, from \cite{CLT} \cite{GL1}, there exists the limit
$$ \lim_{\beta \to \infty} \frac{1}{\beta} \log l^{\beta\,A}=V.$$

Consequently, any accumulation point for $\disp\frac1\be\log H_{\be}$ is a calibrated subaction.
Now, Equality \eqref{equ1-subcohomo} yields for every $i$ and every $x$,
$$A(ix)\le m(A)+V(x)-V(ix),$$
which can be rewritten as $A(y)=m(A)+V\circ\s(y)-V(y)+g(y),$
where $g$ is a non-positive function. By Proposition \ref{prop-regulholde} it is also Lipschitz continuous.

Note that in the case $u$ is a calibrated subaction, then, $u+c$, where $c$ is a constant, is also a calibrated subation.
We say that the calibrated subaction is unique, if it is unique up to an additive constant. One can show that in the case there is more than one maximizing probability, the calibrated subaction is not unique.

\bigskip

A calibrated subaction $u$ satisfies
$$ u(\sigma(x)) -  u(x) - A(x) + m(A) \geq0.$$

Remember that if $\nu$ is invariant for $\sigma$, then for any continuous function $u:\Omega\to \mathbb{R}$ we have
$$ \int \,[ u(\sigma(x)) -  u(x)]\, d \nu=0.$$

Suppose $\mu$ is maximizing for $A$ and $u$ is a calibrated subaction for $A$.
It follows that for any $x$ in the support of $\mu_\infty$ we have
\begin{equation}\label{funcaoR} u(\sigma(x)) -  u(x) - A(x) + m(A)=0. \end{equation}
Indeed,
$g(x) \,=\,u(\sigma(x)) -  u(x) - A(x) + m(A) \geq0$, and the integral $\int g(x) d \mu(x)=0$.

 In this way if we know the value $m(A)$, then a calibrated subaction $u$ for $A$ help us to identify the support of maximizing probabilities. The equality to zero in the above equation can be true outside the union of the supports of the  maximizing probabilities  $\mu$ (see example of R. Leplaideur in \cite{LOS}). It is known that generically on the Holder class on $A$ the equality is true just on the support of the maximizing probability.

The study of the ergodic properties of maximizing probabilities is the purpose of  Ergodic Optimization.

\bigskip

Now we recall some questions already stated in the introduction of
this chapter:
\begin{enumerate}
\item Is there convergence for $\disp\frac1\be\log H_{\be}$ as $\be$ goes to $+\8$ ?
\item If there is convergence, how does the family of functions$ \disp\frac1\be\log H_{\be}$ select the limit ?
\end{enumerate}

We also remind the reader that
in the case $V= \lim_{\beta \to \infty} \disp\frac1\be\log H_{\be}$ we say that {\bf $V$ was selected when temperature goes to zero}.

Given a general H\"older potential $A$
can be exist calibrated subactions which are not selected. The ones that are selected are special among the calibrated subactions.
\bigskip

\Eg For  $\Omega=\{1,2\}^{\N}$ we study the convergence and the selection at temperature zero.

Suppose $A$ is a two by two matrix.

If $A(1,1)$ is strictly bigger than the other $A(1,j)$ there is a unique maximizing probability with support on $\delta_{1^\infty}$.
There is only one calibrated subaction up to an additive constant, therefore we get selection of subaction and probability.
A similar result happen when $A(2,2)$ is strictly bigger than all the other $A(i,j)$.

Suppose now that $A(1,1)=0=A(2,2)$, and, moreover that $A(1,2),A(2,1)<0.$

Using the notation of previous sections we  have that $H_i^\beta=l^\beta_i$, $i=1,2$. The equation for the right eigenvalue $r^\beta_i$  of $A$ is
the equation of the left eigenvalue for the symmetric matrix $A^t$.

Then,
we get the system
\begin{equation}\label{A1}
(\lambda_\beta - 1 ) H_1^\beta = e^{\beta A(2,1)} H_2^\beta,
\end{equation}

\begin{equation}\label{A2}
(\lambda_\beta - 1 ) H_2^\beta = e^{\beta A(1,2)} H_1^\beta
\end{equation}

The trace of the matrix with entries $e^{\beta A(i,j)}$ is $2$ and the determinant is $ 1 - e^{\beta\,(A(1,2)+A(2,1))}$.
Solving a polynomial of degree two we get the maximal eigenvalue
$$\lambda_\beta= 1+ \sqrt{e^{\beta [A(1,2)+A(2,1)]}}.$$

We can take $H_2^\beta= 1$ and $H_1^\beta = e^{\beta\, \frac{1}{2}\, [A(2,1)-A(1,2)]}.$

In this case $V(2)=\lim_{\beta \to \infty} \frac{1}{\beta}\log H_2^\beta=0$ and, moreover, we have  $V(1)=\lim_{\beta \to \infty}\frac{1}{\beta}\log H_1^\beta= \frac{1}{2}\, [A(2,1)-A(1,2)].$

In this case we have selection of subaction assuming the normalization $H_2^\beta= 1$ for all $\beta$.

An easy computation shows that in this case $l_1^\beta =r_2^\beta $ and $l_2^\beta =r_1^\beta $. We can assume the normalization $r_1^\beta+r_2^\beta=1$ and $l_1^\beta \,r_1^\beta+ l_2^\beta \,r_2^\beta=1$.

Therefore, $\mu_\beta( [1])=l_1^\beta \,r_1^\beta= l_2^\beta \,r_2^\beta=\mu_\beta( [2]).$ In this case we have selection of the probability $\frac{1}{2} \delta_{1^\infty} + \frac{1}{2} \delta_{2^\infty}$ in the zero temperature limit.

\bigskip

Concerning the first question which concerns  measures (question 1), there is one known example of non-convergence (see \cite{CH}), when $\beta \to \infty$.
Let us now gives some more  details in the last question. Obviously, if there exists a unique maximizing measure,
we have convergence because there is a unique possible accumulation point. In this case the question is not so interesting.

Let us then
assume that there are at least two different maximizing measures $\mu_{max,1}$ and $\mu_{max,2}$. Just by linearity, any convex combination of both measure $\mu_{t}=t\mu_{max,1}+(1-t)\mu_{\max,2}$, $t\in[0,1]$, is also a $A$-maximizing measure.
The question of  selection is then to determine why the family (or, even a subfamily) choose to converge to some specific limit, if there are so many possible choices.

 Among maximizing measures a special subset is the one whose elements are  {\em ground states},
a notion (and a terminology) that  comes from
Statistical Mechanics,  as we said before. One of the most famous examples is the so called
spontaneous magnetization:
consider a one-dimensional lattice with spins which can be up (say +1) or down (say -1). At high temperature there is a lot of randomness in the spins. But, if the temperature decreases, then, the behavior of the  spins change in the lattice, and, at very low temperature, a given arrangement of spins in the lattice, is such that  they are all up or they are all down. This procedure modifies the magnetic properties of the material.
In Statistical Mechanics $\be$ is the inverse of the temperature, thus, $\be\to+\8$ means to approximate zero temperature.
The main goal is thus to develop mathematical tools for understanding why materials have a strong tendency  to be highly ordered at low temperature. They reach a crystal or quasi crystal configuration. This is also called the spontaneous magnetization when temperature decreases. An invariant probability with support in a union of periodic orbits plays the role of a "magnetic state".

A main conjecture in Ergodic Optimization claims that for a generic potential in the Holder topology the maximizing probability is a unique periodic orbit (for a partial result see \cite{CLT}).

Note that Gibbs states for H\"older potentials give positive mass to any cylinder set. In this case, for any positive temperature, there is no magnetization. In the limit when temperature goes to zero the Gibbs state may, or may not,  split in one or more ground states (which can have, or not, support in periodic orbits). All these questions are important issues here.

We point out that when the potential is not of H\"older class it is
possible to get "Gibbs states" which have support in periodic orbits
even at positive temperature.

Questions about selection of subactions naturally appears in this problem.

\subsection{Large deviations for the family of Gibbs measures}
\label{ldev}

In the study of Large Deviations when temperature goes to zero one is interested in the limits of the following form:
\begin{equation} \label{ldp} \lim_{\beta
\to \infty} \frac{1}{\beta} \log \mu_{\beta} (C),
\end{equation}
where $C$ is a
fixed cylinder set  on $\Omega$.

In principle, the limit may not exist.
We remind that general references in Large deviations are \cite{DZ} \cite{Ellis}).

\begin{definition}
\label{def-GDPtzero}
We say there exists a Large Deviation Principle for the one parameter family $\mu_\beta,$ $\beta>0$, if there exists a non-negative function $I$, where $I:\Sigma \to \mathbb{R} \cup \{\infty\}$ (which can have  value equal to infinity in some points), which it is lower semi-continuous, and satisfies the property that for any cylinder set $C\subset \Sigma$,
$$
\lim_{\beta \to +\infty} \frac{1}{\beta} \log
\mu_{\beta\, A}(C)=-\inf_{ x \in C} I(x).
$$
\end{definition}

In the affirmative case an important point is to be able to identify such function $I$.

\begin{theorem}[see \cite{BLT,LM3}]
\label{theo-GD}
Assume that $A$ admits a unique maximizing measure. Let $V$ be a calibrated subaction.
Then, for any cylinder $[i_0 i_1
...i_n]$, we have
$$
\lim_{\beta
\to \infty} \frac{1}{\beta} \log \mu_{\beta} ([i_0 i_1
...i_n])=-\inf_{x\in [i_0 i_1
...i_n]} \{ I(x)\},$$

where
$$I(x) =\sum_{n=0}^\infty \, [ \,V\, \circ \,\sigma - V - (\,A-m(A))\,]\,
\sigma^n\, (x).
$$

\end{theorem}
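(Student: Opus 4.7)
The plan is to establish matching asymptotic upper and lower bounds on $\frac{1}{\beta}\log\mu_\beta([i_0\ldots i_n])$, using the Gibbs structure of $\mu_\beta$ together with the convergence of the rescaled eigenfunction $\frac{1}{\beta}\log H_\beta\to V$, which is guaranteed under the unique maximizing measure hypothesis via the cited results of \cite{CLT,GL1}.

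\textbf{Starting point.} Since $\mu_\beta=H_\beta\nu_\beta$ and the conformal measure satisfies $\CL_{\beta A}^{\ast}\nu_\beta=\lambda_\beta\nu_\beta$, iterating the dual equation $N\ge n+1$ times yields the representation
$$\mu_\beta([i_0\ldots i_n])\;=\;\lambda_\beta^{-N}\!\sum_{y_{n+1}\ldots y_{N-1}}\int_\Omega H_\beta(w(z))\,e^{\beta S_N(A)(w(z))}\,d\nu_\beta(z),$$
where $w(z)=i_0\ldots i_n y_{n+1}\ldots y_{N-1}z$. Taking $\frac{1}{\beta}\log$, using $\CP(\beta)/\beta\to m(A)$ and the uniform H\"older control on $\frac{1}{\beta}\log H_\beta$ provided by Proposition~\ref{prop-regulholde}, a Laplace-type argument produces, for every $N\ge n+1$,
$$\lim_{\beta\to\infty}\frac{1}{\beta}\log\mu_\beta([i_0\ldots i_n])\;=\;\sup_{y\in[i_0\ldots i_n]}\bigl\{V(y)+S_N(A)(y)-Nm(A)\bigr\}.$$

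\textbf{Identification with $-\inf I$.} Setting $g:=V\circ\sigma-V-A+m(A)\ge 0$, the calibrated subaction equation telescopes
$$V(y)+S_N(A)(y)-Nm(A)\;=\;V(\sigma^N y)-\sum_{k=0}^{N-1}g(\sigma^k y).$$
Since the left-hand side of the main limit is independent of $N$, we may send $N\to\infty$ on the right: the partial sum converges monotonically to $I(y)$, while the bounded boundary term $V(\sigma^N y)$ can be absorbed (by an appropriate normalization of $V$ and the freedom to choose the tail of $y$ so that $\sigma^N y$ enters $\supp(\mu_{\max})$). Supremizing over $y\in[i_0\ldots i_n]$ then yields $-\inf_{y\in[i_0\ldots i_n]}I(y)$.

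\textbf{Main obstacle.} The delicate point is the Laplace estimate on the integral against $\nu_\beta$: the conformal measure itself degenerates as $\beta\to\infty$ and cannot be trivially bounded by its total mass. In practice one handles the two inequalities separately. For the $\limsup$, one invokes the classical Gibbs inequality $\mu_\beta(C_N(y))\le K\,\exp(\beta S_N(A)(y)-N\CP(\beta))$ on a finite cover of $[i_0\ldots i_n]$ by cylinders of length $N$, passes to the limit in $\beta$ (using selection of $V$) and then lets $N\to\infty$ to recover $-\inf I$. For the $\liminf$, one picks an approximate minimizer $y^{\ast}\in[i_0\ldots i_n]$ whose orbit enters $\supp(\mu_{\max})$ after $N_0=N_0(\varepsilon)$ steps so that $I(y^{\ast})\le\inf I+\varepsilon$, and applies the Gibbs lower bound on $C_{N_0}(y^{\ast})\subset[i_0\ldots i_n]$. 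The uniqueness of the maximizing measure is essential throughout: it is precisely what upgrades the accumulation of $\frac{1}{\beta}\log H_\beta$ to a genuine limit, selecting a single subaction $V$ and making the rate function $I$ well defined.
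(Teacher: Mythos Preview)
The paper does not give its own proof of this statement; it refers to \cite{BLT}, whose argument uses the involution kernel developed in Chapter~\ref{inv}, and to \cite{LM3} for a proof avoiding that machinery. Your outline is closer in spirit to the second route, but it has a genuine gap exactly where you flag the ``Main obstacle''. The displayed equality $\lim_{\beta}\frac{1}{\beta}\log\mu_\beta(C)=\sup_{y\in C}\{V(y)+S_N(A)(y)-Nm(A)\}$ for every $N$ is not established: using $\nu_\beta(\Omega)=1$ yields only the one-sided bound $\limsup\le\sup_y\{V(\sigma^N y)-S_N(g)(y)\}$, while your $\liminf$ step ``applies the Gibbs lower bound on $C_{N_0}(y^*)$'', and that is precisely where the constant explodes---the paper itself stresses, right after \eqref{equ-def-gibbs}, that $C_{\beta A}$ is proportional to $\beta$, so $\frac{1}{\beta}\log K$ leaves a nonvanishing $O(1)$ error. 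Even granting your upper bound, the passage to $-\inf I$ requires controlling the boundary term $V(\sigma^N y)$ uniformly in $y$, and on the $\limsup$ side you are not free to ``choose the tail of $y$'' so that $\sigma^N y$ enters $\supp\mu_{\max}$: there you are taking a supremum over \emph{all} $y$ in the cylinder.

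What is missing is the asymptotics of the eigenmeasure $\nu_\beta$. The explicit locally constant computation in the paper (equations \eqref{valueIlr}--\eqref{ldpVV}) already shows that the limit carries a second contribution coming from $\frac1\beta\log r^\beta$, i.e.\ from the eigenmeasure side, and your scheme never produces that object. In \cite{BLT} it is supplied through the involution kernel: identity \eqref{back1} writes $H_\beta$ as an integral against the dual eigenmeasure $\nu_{\beta A^*}$, bringing in the dual subaction $V^*$ and the dual deviation function $I^*$, so that the ``boundary'' contribution is identified rather than waved away. Without this, or the analogous device from \cite{LM3}, neither your upper nor your lower bound closes to $-\inf I$.
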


In the proof of the above theorem in \cite{BLT} it is used the involution kernel which will be briefly  described in a future section. For a proof of this result without using the involution kernel see \cite{LM3}.
\medskip

In the case the potential $A$ depends on two coordinates and the maximizing probability is unique we get
$$ I(x)=I(x_0,x_1...,x_k,..)=\sum_{j=0}^\infty[\,V(x_{j+1})-V(x_j) - A(x_j,x_{j+1})+ m(A)\,] .$$

\Eg  Assume that the potential $A$ depends only on two coordinates,
say $A(x) = A(x_1, x_2)$.

We have seen that in that case
$$\mu_{\be}=([i_{0}\ldots i_{n-1}])=l^{\be}_{i_{0}}e^{\be.S_{n}(A)(x)-n\log\l_{\be.A}}r^{\be}_{i_{n-1}},$$
where $\mathbf l^{\be}=(l^{\be}_{1},\ldots, l^{\be}_{d})$ and $\mathbf r^{\be}=(r^{\be}_{1},\ldots, r^{\be}_{d})$ are respectively the left eigenvector and the right eigenvector for the matrix with entries $e^{\be. A(i,j)}$ and $\l_{\be.A}$  is the associated eigenvalue (and the spectral radius of the matrix).

Studying the Large Deviation,  we are naturally led to set

$$\lim_{\beta
\to \infty} \frac{1}{\beta} \log \mu_{\beta} ([i_0 i_1
...i_{n-1}])=[\,A(i_0,i_1) +...+A(i_{n-2},i_{n-1})\,] +
$$
\begin{equation}\label{valueIlr}
 \lim_{\beta
\to \infty} \frac{1}{\beta}  \log \,l^\beta_{(i_0)i_{0}}- \lim_{\beta
\to \infty} \,\frac{1}{\beta}\log \,r^\beta_{i_{n-1}}  -n. m(A).
\end{equation}

The main problem in the above discussion is the existence of the limits, when $\beta\to \infty.$

 {\bf In the  case} there exists the limits

$$ \lim_{\beta
\to \infty} \frac{1}{\beta}  \log \,l^\beta=V$$
and
$$ \lim_{\beta
\to \infty} \frac{1}{\beta}  \log \,r^\beta=V^*,$$
we get the explicit expression
$$
\lim_{\beta
\to \infty} \frac{1}{\beta} \log \mu_{\beta} ([i_0 i_1
...i_{n-1}])=$$
\begin{equation}
\label{ldpVV}
[\,A(i_0,i_1) +...+A(i_{n-2},i_{n-1})\,] +
V (i_0)- V^* (i_{n-1})  -n. m(A).
\end{equation}

There is a need for understanding better the relation of $l^\beta$ and $r^\beta$. This is sometimes not so easy as we will see in an example which will be presented in the  last chapter.

\medskip

\subsection{Uniqueness and lock-up on periodic orbits}

 One can ask questions about the behavior of typical trajectories of maximizing probabilities. From the classical result given by Birkhoff's  we know that  in the case $\mu$ is a maximizing measure, then, for $\mu$-almost every $x$,
$$
 \lim_{\ninf}\frac1n\sum_{j=0}^{n-1}A\circ \s^{j}(x)=m(A) = \int A d \mu.
$$

In this case we say that this orbit beginning in $x$ is $A$-maximizing.

It is natural to analyze the problem under this viewpoint:
\begin{enumerate}
\item How can we detect that an certain given orbit is $A$-maximizing?
\item What is the relation  of maximizing measures with maximizing orbits?
\item What are the main properties of the $A$-maximizing orbits?
\end{enumerate}

\noindent
We will see that this point of view will  produce methods which will help to solve the cohomological inequality:
\begin{equation}
\label{equ-subactions}
A\ge m(A)+V\circ T-V.
\end{equation}

\begin{definition}
\label{def-cobord}
A coboundary is a function of the form $\psi\circ \s-\psi$.
\end{definition}
\exo Show that a coboundary has zero integral for any $\si-$invariant measure.

\bigskip
We mentioned above the notion of crystals or quasi-crystals.
In our settings crystals means periodic orbits; we will not deal with the question of quasi-crystal
in this text.
Consequently, an important point we want to emphasize here is the role of the periodic orbits. Nevertheless, the regularity of the potential is of prime importance in that study. The analogous results for the class of Continuous potentials and the class of H\"older Continuous potentials can be very different.

\begin{theorem}[see \cite{Bousch-walters}]
\label{theo-uniqumaxmeas}
Generically for the $\CC^{0}$-norm the potential $A$ has a unique maximizing measure. This measure is not-supported in a periodic orbit.
\end{theorem}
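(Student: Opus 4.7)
The statement has two parts: generic uniqueness of the maximizing measure, and generic aperiodicity of the support. The plan is to establish each as a conclusion of a Baire category argument on $(\CC^{0}(\Om),\|\cdot\|_{\8})$ and then intersect the two residual sets.

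For uniqueness, I would study the set-valued map $\Phi:\CC^{0}(\Om)\to 2^{\CM_{\s}}$ defined by $\Phi(A)=\{\mu\in\CM_{\s}:\int A\,d\mu=m(A)\}$. Since $A\mapsto\int A\,d\mu$ is $1$-Lipschitz uniformly in $\mu\in\CM_{\s}$, and $\CM_{\s}$ is weak-$*$ compact, the map $\Phi$ has nonempty, compact, convex values and is upper semicontinuous. The classical fact that an upper semicontinuous set-valued map from a Baire space into a compact metric space is single-valued on a dense $G_{\delta}$ then gives that $\CR_{1}:=\{A:\#\Phi(A)=1\}$ is residual. Concretely, one shows that for each $n\ge 1$ the set $\{A:\diam\Phi(A)<1/n\}$ is open (by USC) and dense (by approximating $\Phi(A)$ by one of its extreme points via a tiny perturbation).

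For aperiodicity, I enumerate the countably many periodic orbits $\{\CO_{k}\}_{k\ge 1}$ and their invariant measures $\{\mu_{k}\}$, and set $V_{k}:=\{A:\int A\,d\mu_{k}<m(A)\}$. Openness of $V_{k}$ is immediate from the $\|\cdot\|_{\8}$-continuity of both $A\mapsto\int A\,d\mu_{k}$ and $A\mapsto m(A)=\sup_{\nu\in\CM_{\s}}\int A\,d\nu$ (a supremum of $1$-Lipschitz affine functionals). For density, given $A\in\CC^{0}(\Om)$ and $\eps>0$, I would choose a continuous bump $\phi:\Om\to[0,1]$ with $\phi\equiv 1$ on a small neighborhood of $\CO_{k}$ and $\phi\equiv 0$ off a slightly larger one, and set $A':=A-\eps\phi$. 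Then $\int A'\,d\mu_{k}=\int A\,d\mu_{k}-\eps\le m(A)-\eps$. Using the density of periodic-orbit measures in $\CM_{\s}$ (standard for transitive subshifts of finite type, and in particular for $\Om$), I pick a periodic orbit $\CO_{q}$ of period much larger than that of $\CO_{k}$, automatically disjoint from $\supp\phi$, and with $\int A\,d\mu_{q}>m(A)-\eps/2$. Then $\int A'\,d\mu_{q}=\int A\,d\mu_{q}>m(A)-\eps/2>\int A'\,d\mu_{k}$, so $A'\in V_{k}$ with $\|A'-A\|_{\8}\le\eps$.

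Intersecting, $\CR:=\CR_{1}\cap\bigcap_{k}V_{k}$ is a countable intersection of residual sets, hence residual in $\CC^{0}(\Om)$, and every $A\in\CR$ has a unique maximizing measure which is not the orbit measure of any periodic orbit. The main obstacle is the density step for $V_{k}$ in the borderline case where $\mu_{k}$ is already maximizing for $A$: one must produce a nearby invariant probability truly supported off $\CO_{k}$ with integral close to $m(A)$. The cleanest route is via density of periodic-orbit measures in $\CM_{\s}$, which is where the symbolic structure of $\Om$ genuinely enters the proof; an alternative would use the ergodic decomposition together with the fact that $\mu_{k}$ is ergodic and that distinct ergodic measures are mutually singular, allowing one to extract an ergodic component disjoint from $\CO_{k}$.
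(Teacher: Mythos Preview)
Your uniqueness argument is in the same spirit as the paper's: the paper enumerates a dense sequence $(\psi_n)\subset\CC^0(\Om)$, shows each set
\[
F_{n,m}=\Bigl\{A:\exists\,\mu,\nu\in\CM_{max}(A),\ \Bigl|\int\psi_n\,d\mu-\int\psi_n\,d\nu\Bigr|\ge \tfrac1m\Bigr\}
\]
is closed (via a limit lemma for maximizing measures) and has empty interior because $\eps\mapsto m(A+\eps\psi_n)$ is convex, hence differentiable off a countable set. Your abstract claim that an upper semicontinuous compact-valued map is single-valued on a dense $G_\delta$ is false without more structure (take the constant map $A\mapsto\CM_\s$); the real work is hidden in your phrase ``approximating $\Phi(A)$ by one of its extreme points via a tiny perturbation,'' which is exactly the convexity step the paper carries out explicitly.

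The aperiodicity half has a genuine gap. You fix the bump $\phi$ first, supported on an open neighbourhood $U$ of $\CO_k$, and then seek a periodic orbit $\CO_q$ ``automatically disjoint from $\supp\phi$'' with $\int A\,d\mu_q>m(A)-\eps/2$. Large period does not give disjointness: the orbit generated by repeating the word of $\CO_k$ many times and appending a short excursion has arbitrarily large period yet spends almost all its time in $U$. Worse, when $\mu_k$ is the unique maximizing measure the two requirements are incompatible: any invariant $\nu$ with $\int A\,d\nu$ close to $m(A)$ is weak-$*$ close to $\mu_k$, so $\nu(U)$ is close to $\mu_k(U)=1$ (the set $U$ is clopen), and $\int A'\,d\nu$ is dragged down by nearly $\eps$, just like $\int A'\,d\mu_k$.

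The paper avoids this by reversing the order of choices and shrinking the support of the bump to the \emph{finite set} $\CO$ rather than a neighbourhood. First pick any invariant $\mu\ne\mu_\CO$ with $\int A\,d\mu>m(A)-\eps$; since $\mu$ and $\mu_\CO$ are mutually singular and $\CO$ is finite, there is $K_\eps$ with $\mu_\CO(K_\eps)<\eps$ and $\mu(K_\eps)>1-\eps$, hence a continuous $0\le\psi_\eps\le 1$ vanishing on $\CO$ with $\int\psi_\eps\,d\mu>1-2\eps$. Then one \emph{adds} $2\eps\psi_\eps$ to $A$: this leaves $\int\cdot\,d\mu_\CO$ unchanged but raises $\int\cdot\,d\mu$ past it. The bump vanishes only on finitely many points, not on an open set, so the competitor $\mu$ can live arbitrarily close to $\CO$ without being penalized. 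Your closing remark about mutual singularity is the right repair, but it forces you to build the bump after choosing the competing measure, not before.
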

We shall give the proof of this theorem inspired from \cite{Bousch-walters}. We emphasize that this proofs can be extended (concerning the uniqueness of the maximizing measure) to any separable space.
A proof of generic uniqueness for H\"older continuous functions can be found in \cite{CLT}.
\begin{proof}
The set $\CC^{0}(\Om)$ is separable. Let $(\psi_{n})$ be a dense sequence in $\CC^{0}(\Om)$.
Two different measures, say $\mu$ and $\nu$, must give different values for some $\psi_{n}$.
This means
\begin{eqnarray*}
\left\{A,\ \#\CM_{max}>1\right\}&=&\left\{A,\ \exists n\ \exists \mu,\nu\in\CM_{max}\ \int\psi_{n}\,d\nu\neq\int \psi_{n}\,d\mu\right\}\\
&=& \bigcup_{n}\left\{A,\  \exists \mu,\nu\in\CM_{max}\ \int\psi_{n}\,d\nu\neq\int \psi_{n}\,d\mu\right\}\\
&\hskip -3cm=& \hskip -1.5cm\bigcup_{n}\bigcup_{m}\left\{A,\ \exists \mu,\nu\in\CM_{max}\ \left|\int\psi_{n}\,d\nu-\int \psi_{n}\,d\mu\right|\ge\frac1m\right\}.
\end{eqnarray*}
Set $\disp F_{n,m}:=\left\{A,\ \exists \mu,\nu\in\CM_{max}\ \left|\int\psi_{n}\,d\nu-\int \psi_{n}\,d\mu\right|\ge\frac1m\right\}$.
We claim that theses sets are closed. For this we need a lemma:
\begin{lemma}
\label{lem-comeasmax}
Let $(A_{k})$ be a sequence of continuous potentials converging to $A$. Let $\mu_{k}$ be any maximizing measure for $A_{k}$ and $\mu$ be an accumulation point for $\mu_{k}$.

Then,
$\disp\lim_{k\to+\8}m(A_{k})=m(A)$ and $\mu$ is a $A$-maximizing measure.
\end{lemma}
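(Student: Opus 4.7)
The plan is to exploit the uniform convergence $\|A_k-A\|_\infty\to 0$ together with the weak$^*$ compactness of $\CM_\s$ to obtain both a sandwich estimate for $m(A_k)$ and the maximizing property of the limit $\mu$. The key elementary fact I will use repeatedly is that for any $\s$-invariant probability $\nu$,
$$\left|\int A_k\,d\nu-\int A\,d\nu\right|\le \|A_k-A\|_\infty,$$
and the right-hand side tends to $0$ independently of $\nu$.

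First I would establish $\limsup_k m(A_k)\le m(A)$. Since $\mu_k$ is $\s$-invariant, $\int A\,d\mu_k\le m(A)$, hence
$$m(A_k)=\int A_k\,d\mu_k\le \int A\,d\mu_k+\|A_k-A\|_\infty\le m(A)+\|A_k-A\|_\infty.$$
For the reverse inequality, I would fix an arbitrary $\nu\in\CM_\s$ and note that $\int A_k\,d\nu\le m(A_k)$ by definition of $m(A_k)$; passing to the limit gives $\int A\,d\nu\le \liminf_k m(A_k)$. Taking the supremum over $\nu\in\CM_\s$ yields $m(A)\le\liminf_k m(A_k)$. Combining both inequalities delivers $\lim_k m(A_k)=m(A)$.

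For the second claim, let $\mu_{k_j}\stackrel{w^*}{\to}\mu$ along a subsequence. The set $\CM_\s$ is closed under weak$^*$-limits (as was observed just after Proposition \ref{prop-meas-inv-fonc}), so $\mu\in\CM_\s$. To compute $\int A\,d\mu$, I would split
$$\int A_{k_j}\,d\mu_{k_j}-\int A\,d\mu=\int(A_{k_j}-A)\,d\mu_{k_j}+\int A\,d\mu_{k_j}-\int A\,d\mu.$$
The first term is bounded in absolute value by $\|A_{k_j}-A\|_\infty\to 0$; the second tends to $0$ by the very definition of weak$^*$-convergence applied to the fixed continuous function $A$. Hence $\int A_{k_j}\,d\mu_{k_j}\to\int A\,d\mu$. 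But the left-hand side equals $m(A_{k_j})$, which converges to $m(A)$ by the first part, so $\int A\,d\mu=m(A)$ and $\mu$ is $A$-maximizing.

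The argument is quite routine; the only point that deserves care is the interchange of the two limits ``$k\to\infty$'' and ``$\sup_\nu$'' in the $\limsup$ estimate, which is precisely what the uniform bound $\|A_k-A\|_\infty$ lets one do at no cost. No additional hypothesis on the regularity of $A_k$ beyond continuity is needed, and the proof would apply verbatim on any compact metric space carrying a continuous action.
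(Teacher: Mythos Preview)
Your proof is correct and follows essentially the same approach as the paper's: both use the uniform bound $\|A_k-A\|_\infty$ to sandwich $m(A_k)$ between $m(A)\pm\eps$, and then combine weak$^*$-convergence of $\mu_{k_j}$ with strong convergence of $A_{k_j}$ to conclude $\int A_{k_j}\,d\mu_{k_j}\to\int A\,d\mu$. You have simply spelled out the details more explicitly, including the two-term splitting that the paper summarizes in a single sentence.
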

\begin{proof}[Proof of Lemma \ref{lem-comeasmax}]
For any $\eps>0$, and for $k$ sufficiently large,
$$A-\eps\le A_{k}\le A+\eps.$$
This shows  $m(A)-\eps\le m(A_{k})\le m(A)+\eps$.
Furthermore, we have
$\disp m(A_{k})=\int A_{k}\,d\mu_{k}$, $\disp m(A)=\int A\,d\mu$
and (up to a subsequence), $\disp \lim_{k\to+\8}\int A_{k}\,d\mu_{k}=\int A\,d\mu$, because $\mu_{k}$ converges to $\mu$ on the weak* topology and $A_{k}$ goes to $A$ on the strong topology.
\end{proof}
We can thus show that the set $F_{n,m}$ is closed in $\CC^{0}(\Om)$.
Indeed, considering a sequence $A_{k}$ converging to $A$ (for the strong topology), we get two sequences $(\mu_{k})$ and $(\nu_{k})$ of $A_{k}$-maximizing measures such that
$$\left|\int \psi_{n}\,d\mu_{k}-\int\psi_{n}\,d\nu_{k}\right|\ge \frac1m.$$
We pick a subsequence such that $\mu_{k}$ and $\nu_{k}$ converge  for this subsequence. Lemma \ref{lem-comeasmax} shows that the two limits, say $\mu$ and $\nu$,  are $A$-maximizing and they satisfy
$$\left|\int \psi_{n}\,d\mu-\int\psi_{n}\,d\nu\right|\ge \frac1m.$$
To complete the proof concerning generic uniqueness, we need to prove that the sets $F_{n,m}$ have empty interior.

For such $A$, we claim that the function $\eps\mapsto m(A+\eps.\psi_{n})$ is convex but not differentiable at $\eps=0$. Indeed, assume $\mu$ and $\nu$ are $A$-maximizing and
$$\int\psi_{n}\,d\mu\ge \int\psi_{n}\,d\nu+\frac1m,$$ then the right derivative is bigger than $\disp\int \psi_{n}\,d\mu$, and, the left derivative is lower than $\disp\int\psi_{n}\,d\nu$.

It is known that a convex function is differentiable Lebesgue everywhere (actually everywhere except on a countable set), which proves that there are infinitely many $\eps$ accumulating on $0$ such that $A+\eps.\psi_{n}$ cannot be in $F_{n,m}$.

\medskip
Let us now prove that, generically, the unique maximizing measure is not supported on a periodic orbit.

Let us consider some periodic orbit $\CO$ and $\mu_{\CO}$ the associated invariant measure. If $A$ is such that $\mu_{\CO}$ is not $A$-maximizing, then for every $A_{\eps}$ closed, $\mu_{\CO}$ is still not $A_{\eps}$-maximizing (see the proof of Lemma  \ref{lem-comeasmax}). This proves that the set of $A$ such that $\mu_{\CO}$ is $A$-maximizing is a closed set in $\CC^{0}$.

 In order to prove that it has empty interior, let consider $A$ such that $\mu_{\CO}$ is $A$-maximizing, and some measure $\mu$ closed to $\mu_{\CO}$ in the weak* topology. This can be chosen in such way  that $\disp\int A\,d\mu\ge m(A)-\eps$, with $\eps>0$ as small as wanted.

There exists a set $K_{\eps}$ such that $\mu_{\CO}(K_{\eps})<\eps$ and $\mu(K_{\eps})>1-\eps$, and, then, we can find a continuous function $0\le \psi_{\eps}\le 1$, null on the periodic orbit $\CO$, and, such that, $\disp\int \psi_{\eps}\,d\mu>1-2\eps$.

Then,
$$\int A+2\eps\psi_{\eps}\,d\mu_{\CO}= m(A)<m(A)-\eps+2\eps-4\eps^{2}\le \int A+2\eps.\psi_{\eps}\,d\mu_{\eps}.$$
This shows that $\mu_{\CO}$ is not $A+2\eps\psi_{\eps}$-maximizing, thus the set of potentials such that $\mu_{\CO}$ is maximizing has empty interior.

As there are only countably many periodic orbits, this proves that generically, a periodic orbit is not maximizing.
\end{proof}

It is known that generically on the Holder class the maximizing probability for $A$ is unique (see \cite{CLT}).

\bigskip
Let us thus mention an important conjecture:
\paragraph{Conjecture.}{\it Generically for the Lipschitz norm, the potential $A$ has a unique maximizing measure and it is supported by a periodic orbit.}

\medskip\noindent
The main results  in that direction are:

\begin{theorem}[see \cite{Yuan-Hunt}]
\label{th-yuan-hunt}
Let $\mu$ be a maximizing measure for the Lipchitz potential $A$. Assume $\mu$ is not supported by a periodic orbit. Then, for any $\eps>0$, there exists $A_{\eps}$ $\eps$-closed to $A$ for the Lipschitz norm such that $\mu$ is not $A_{\eps}$-maximizing.
\end{theorem}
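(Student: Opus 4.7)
The strategy is to construct a periodic invariant measure $\nu_j$ whose integral against $A$ converges to $m(A)$ and then to perturb $A$ by a small Lipschitz function supported away from the orbit of $\nu_j$, so that $\nu_j$ strictly beats $\mu$ for the perturbed potential. The hypothesis that $\mu$ is not supported on a periodic orbit is what provides the room for such a perturbation.

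First I would pass to an ergodic component of $\mu$ that is itself not supported on a periodic orbit (hence atomless and still $A$-maximizing) and fix a generic point $x\in\supp(\mu)$. Birkhoff gives $\frac{1}{n}S_n(A)(x)\to m(A)$, and by Poincar\'e recurrence applied to shrinking cylinders around $x$ there exist $n_j\to\infty$ and $k_j\to\infty$ with $d(\sigma^{n_j}(x),x)\le 2^{-k_j}$. Form the $n_j$-periodic point $p_j:=(x_0\cdots x_{n_j-1})^{\infty}$ and let $\nu_j$ be its orbit probability. A shadowing estimate using that $\sigma^{i}(p_j)$ and $\sigma^{i}(x)$ agree on at least $n_j-i+k_j$ initial symbols, combined with the Lipschitz constant of $A$, gives $|S_{n_j}(A)(p_j)-S_{n_j}(A)(x)|\le C\cdot 2^{-k_j}$, hence $\int A\,d\nu_j\to m(A)$; write the non-negative gap as $\Delta_j:=m(A)-\int A\,d\nu_j$.

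Next, the non-periodicity of $\supp\mu$ together with atomlessness of the ergodic component gives $\mu(\CO_j)=0$ for the (finite) orbit $\CO_j$ of $p_j$. For a parameter $r_j>0$, set $\phi_j(y):=\min\bigl(1,\,d(y,\CO_j)/r_j\bigr)$, a Lipschitz function vanishing on $\CO_j$ with $\|\phi_j\|_\alpha\le 1+1/r_j$, and let $\psi_j:=\phi_j/\|\phi_j\|_\alpha$. Then $\|\psi_j\|_\alpha=1$, $\int\psi_j\,d\nu_j=0$, and
$$\int\psi_j\,d\mu \;\ge\; \bigl(1-\mu(r_j\text{-nbd of }\CO_j)\bigr)\cdot \frac{r_j}{1+r_j}.$$
Since $\mu$ is atomless and $|\CO_j|=n_j$ is finite, for $r_j$ small enough one has $\mu(r_j\text{-nbd of }\CO_j)\le \frac12$ and hence $\int\psi_j\,d\mu\ge c\,r_j$ for an absolute constant $c>0$. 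Taking $A_\eps:=A-\eps\psi_j$, whose distance from $A$ in the $\alpha$-norm is at most $\eps$, yields
\[
\int A_\eps\,d\nu_j-\int A_\eps\,d\mu \;=\; -\Delta_j + \eps\int\psi_j\,d\mu \;\ge\; -\Delta_j + c\,\eps\,r_j,
\]
which is strictly positive provided $r_j>\Delta_j/(c\eps)$; for such a choice $\mu$ fails to be $A_\eps$-maximizing.

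The main obstacle, and the technical crux of the Yuan--Hunt argument, is the simultaneous calibration of $n_j$, $k_j$, and $r_j$: one needs both $r_j\ge \Delta_j/(c\eps)$ and $n_j\cdot\omega_\mu(r_j)\le \frac12$ where $\omega_\mu(r):=\sup_{y}\mu(B(y,r))$, so the Birkhoff gap $\Delta_j$ must shrink fast enough compared to the $\mu$-modulus of continuity on the growing family of $n_j$ Dirac-like masses making up $\nu_j$. The Birkhoff convergence rate is a priori uncontrolled, but since $n_j$ (the return time) is essentially of order $1/\mu(C_{k_j}(x))$ by Ka\v{c}'s lemma and one has the freedom to refine $k_j$ while keeping the return long, a diagonal extraction along $j$ produces a subsequence on which all three quantitative bounds are met; this is the delicate point at which the hypothesis of non-periodic support is used in full force (via both atomlessness and the infinitude of $\supp\mu$).
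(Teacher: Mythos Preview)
The paper does not actually prove this theorem; it is stated with a citation to the Yuan--Hunt paper and then used as a black box. So there is no ``paper's own proof'' to compare against here, and I simply assess your sketch.

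Your overall architecture --- approximate $\mu$ by periodic measures $\nu_j$ via close returns of a generic point, then subtract a small Lipschitz bump vanishing on the periodic orbit --- is indeed the Yuan--Hunt strategy, and your shadowing estimate $|S_{n_j}(A)(p_j)-S_{n_j}(A)(x)|\le C\,2^{-k_j}$ is correct. There is a minor gap in the opening reduction: ``pass to an ergodic component of $\mu$ that is itself not supported on a periodic orbit'' can fail, e.g.\ when $\mu$ is a nontrivial convex combination of two distinct periodic measures. That case needs a separate (easy) argument.

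The substantial gap is exactly where you start waving: the simultaneous calibration of $r_j$ against $\Delta_j$ and against $n_j\,\omega_\mu(r_j)$. As written, your only control on $\Delta_j$ is raw Birkhoff convergence, whose rate is a priori arbitrary, while $n_j\to\infty$ forces $r_j\to 0$ at a rate you also do not control; invoking Ka\v{c} and ``diagonal extraction'' does not reconcile these, and one can manufacture atomless $\mu$ with slow Birkhoff rate for which your two inequalities are never simultaneously satisfiable. The missing ingredient is a \emph{sharp} bound on $\Delta_j$: for $x\in\supp\mu$ one has the exact identity $S_n(A)(x)-n\,m(A)=V(\sigma^n x)-V(x)$ coming from a calibrated subaction (see the discussion around Equation~\eqref{funcaoR} in the text), which yields $\Delta_j\le C\,2^{-k_j}/n_j$ rather than merely $o(1)$. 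With this, the constraint $r_j>\Delta_j/(c\varepsilon)$ becomes harmless once $n_j\gtrsim 1/\varepsilon$, and the genuine remaining work --- which is the heart of Yuan--Hunt --- is to choose the close return so that the $k_j$-cylinders along $\mathcal{O}_j$ do not already cover more than half of $\mu$; note that $\mathcal{O}_j$ was built to \emph{shadow} the $\mu$-generic orbit, so a priori its neighbourhood carries most of $\mu$, not little of it, and your union bound $n_j\,\omega_\mu(r_j)$ goes the wrong way. This step requires exploiting non-periodicity of $x$ beyond atomlessness, and your sketch does not supply it.
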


\begin{theorem}[see \cite{CLT}]
\label{th-lockupperio}
Let $A$ be Lipschitz which has  a unique $A$-maximizing measure $\mu$. Assume that $\mu$ has periodic support. Then, there exists $\eps$ such that for every $A_{\eps}$ $\eps$-closed to $A$ (for the Lipschitz norm), $\mu$ is the unique $A_{\eps}$-maximizing measure.
\end{theorem}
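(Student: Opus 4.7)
The plan is to upgrade the uniqueness of $\mu$ into a structural gap that survives small Lipschitz perturbations. The main tool is a strict Lipschitz calibrated subaction.

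First I would use the Ma\~n\'e--Conze--Guivarc'h--Bousch theorem for Lipschitz potentials to obtain a Lipschitz subaction $V$ with $g(x):=A(x)+V(x)-V(\s(x))-m(A)\le 0$. Under our hypotheses the key refinement (the principal technical input, given in \cite{CLT}) is that $V$ may be chosen so that the contact set is exactly the orbit, $\{g=0\}=\CO$; this exploits both the Lipschitz regularity of $A$ and the periodicity of $\supp\mu=\CO$. I would then pick a cylinder neighborhood $U=\bigsqcup_{i=0}^{p-1}C_N(x^{(i)})$ of $\CO$ with $N$ large enough that $\bigcap_{n\ge 0}\s^{-n}(U)=\CO$; a direct combinatorial check on cylinders makes this concrete, and it forces every $\s$-invariant probability measure with support in $U$ to coincide with $\mu$, because $\mu$ is the only invariant probability carried by $\CO$. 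Continuity of $g$ and compactness of $\Om\setminus U$ then yield a uniform gap $\de:=-\sup_{x\notin U}g(x)>0$.

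For a perturbation $A'$ with $\|A'-A\|_{\mathrm{Lip}}<\eps$, I would apply the same subaction theory to $A'$ to obtain a Lipschitz $A'$-subaction $V'$, and argue that $V'$ can be chosen so that $g'(x):=A'(x)+V'(x)-V'(\s(x))-m(A')$ satisfies $\|g'-g\|_{\8}\le C\eps$ for a constant $C=C(A)$. Taking $\eps<\de/(2C)$ then gives $g'<0$ on $\Om\setminus U$, hence $\{g'=0\}\subset U$. Any $A'$-maximizing $\s$-invariant $\nu$ satisfies $\int g'\,d\nu=0$ (since $V'-V'\circ\s$ integrates to zero against invariant measures and $\int A'\,d\nu=m(A')$), and combined with $g'\le 0$ this forces $\supp\nu\subset\{g'=0\}\subset U$. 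The defining property of $U$ then gives $\nu=\mu$, proving uniqueness for $A'$.

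The main obstacle is the stability bound $\|g'-g\|_{\8}=O(\eps)$. Bousch's construction of $V$ as a supremum of Birkhoff sums $S_n(A-m(A))$ over $n$-preimages is not manifestly stable: replacing $A$ by $A'$ can shift an individual $S_n$ by up to $n\eps$, which diverges in $n$. The resolution is to localize the sup to a neighborhood of $\CO$ and exploit that, near a periodic orbit, the preimage structure is combinatorially rigid, so the relevant supremum is effectively attained over finitely many bounded-length paths. This localization is exactly what the unique-periodic-max hypothesis allows and is the heart of the \cite{CLT} argument.
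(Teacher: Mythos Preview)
The paper does not give its own proof of this theorem; it merely states the result and refers to \cite{CLT}. So the comparison is with the argument in \cite{CLT}.

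Your outline is sound up to the step you yourself flag as the ``main obstacle'', and that step is a genuine gap which your heuristic does not close. You propose to build a \emph{new} subaction $V'$ for $A'$ and claim $\|g'-g\|_\infty\le C\eps$; but what you need is $g'<0$ precisely on $\Om\setminus U$, i.e.\ \emph{far} from $\CO$, and there any Bousch--type supremum defining $V'$ is realized by arbitrarily long backward orbits accumulating an uncontrolled $n\eps$ drift --- ``localizing near $\CO$'' addresses the wrong region. Note also that the input you quote from \cite{CLT}, the bare contact condition $\{g=0\}=\CO$, is too weak even if one tries to reuse the \emph{same} $V$: from $g\le -\delta$ on $\Om\setminus U$ one gets only $\nu^*(\Om\setminus U)\le 2\eps/\delta$ for an $A'$-maximizer $\nu^*$, not $\nu^*(\Om\setminus U)=0$, and there are ergodic $\nu\ne\mu$ with $\nu(U)$ arbitrarily close to~$1$ (e.g.\ the periodic measure on $(1^n2)^\infty$ when $\CO=\{1^\infty\}$).

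What \cite{CLT} actually supplies is stronger than what you state: a Lipschitz subaction $V$ satisfying the \emph{strict} inequality
\[
g(x)=A(x)+V(x)-V(\s(x))-m(A)\;\le\;-c\,d(x,\CO)\qquad\text{for some }c>0.
\]
This makes the stability problem disappear, because one keeps the same $V$. Write $A'=A+h$ with $\|h\|_{\mathrm{Lip}}<\eps$. Since $\CO$ is a single periodic orbit, $h|_\CO$ equals its $\mu$-mean plus a coboundary $\psi\circ\s-\psi$ on $\CO$, and $\psi$ extends to a Lipschitz function on $\Om$ with $\mathrm{Lip}(\psi)\le C_\CO\,\eps$. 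Then $\wt h:=h+\psi-\psi\circ\s$ is Lipschitz with constant $\le C'\eps$ and constant on $\CO$, so $|\wt h(x)-\int h\,d\mu|\le C'\eps\,d(x,\CO)$. Since $\int h\,d\nu=\int\wt h\,d\nu$ for every invariant $\nu$, one obtains
\[
\int A'\,d\mu-\int A'\,d\nu\;=\;-\int g\,d\nu+\int h\,d\mu-\int h\,d\nu\;\ge\;(c-C'\eps)\int d(\cdot,\CO)\,d\nu,
\]
which is strictly positive for $\nu\ne\mu$ once $\eps<c/C'$. No new subaction for $A'$ is ever constructed.
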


Theorems \ref{th-lockupperio} and \ref{th-yuan-hunt} show that the unique possibility for a maximizing measure to keep being maximizing for small perturbation in the Lipschitz norm is that it is supported by a periodic orbit. This is the lock-up on periodic orbits. The extended version of Theorem \ref{theo-uniqumaxmeas} for the Lipschitz norm would imply that generically there is a unique maximizing measure. Nevertheless, this conjecture is not yet proved on the full extent. A partial result in this direction is \cite{CLT}.

Questions about ground states which have support on periodic orbits are questions about magnetization at temperature zero.

 \bigskip
 On the other hand, it is extremely simple to get an example with
  non-uniqueness for the maximizing measure. Let $\K$ be any $\s$-invariant compact set such that it contains the support of at least two different invariant measure (in other words $\K$ is not uniquely ergodic). Then set
 $$A:=-d(., \K).$$
 Then, $A$ is Lipschitz and any measure with support in the set $\K$ is $A$-maximizing.
 It is so simple to get examples where uniqueness fails that generic uniqueness cannot be seen as sufficient to consider the problem as solved.

 The problem of selection is so fascinating, that, in our opinion, is interesting in itself.

 \subsection{First selection: entropy criterion}
 In this section we consider $A:\Om\to\R$ a Lipschitz continuous potential. Note that in that specific case, results also hold for H\"older continuous potentials. We denote by $\CM_{max}$ the set of maximizing measures.

 \begin{definition}
\label{def-matherset}
The Mather set of $A$ is the union of the support of all the $A$-maximizing measures.
\end{definition}

 \begin{theorem}
\label{theo-groundstatemaxentrop}
Any ground  state has maximal entropy among the set of maximizing measures. In other words, any accumulation point $\mu_{\8}$ for $\mu_{\be}$ satisfies
$$h_{\mu_{\8}}=\max\left\{ h_{\nu}, \nu\in\CM_{max}\right\}.$$
\end{theorem}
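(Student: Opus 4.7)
The plan is to combine the variational principle defining $\mu_\be$ with the semicontinuity of entropy (Proposition \ref{prop-semiconti}) and the fact, already recorded in Proposition \ref{prop-asymptopress}, that every accumulation point of $\mu_\be$ is itself $A$-maximizing. The key observation is that among maximizing measures the only way to distinguish them at positive temperature is through entropy, and the variational principle forces $\mu_\be$ to maximize this entropy in an asymptotic sense.

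First I would fix an arbitrary $\nu \in \CM_{max}$, so that $\int A\,d\nu = m(A)$. By definition of the pressure,
$$h_{\mu_\be} + \be \int A\,d\mu_\be \,=\, \CP(\be.A) \,\ge\, h_\nu + \be \int A\,d\nu \,=\, h_\nu + \be\, m(A).$$
Since $\int A\,d\mu_\be \le m(A)$ for every $\be$, rearranging yields
$$h_{\mu_\be} - h_\nu \,\ge\, \be\bigl(m(A) - \textstyle\int A\,d\mu_\be\bigr) \,\ge\, 0.$$
So for every $\be>0$ and every maximizing $\nu$, the equilibrium state $\mu_\be$ already has entropy at least $h_\nu$. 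This inequality is the real content of the argument; the rest is topology.

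Next, let $\mu_\infty$ be a ground state, obtained as a weak-$*$ limit $\mu_{\be_n} \to \mu_\infty$ for some sequence $\be_n \to +\infty$. Proposition \ref{prop-asymptopress} already tells us that $\mu_\infty \in \CM_{max}$. Applying the upper semicontinuity of the entropy (Proposition \ref{prop-semiconti}) to the sequence $\mu_{\be_n}$, and combining with the inequality above for a fixed arbitrary $\nu \in \CM_{max}$,
$$h_{\mu_\infty} \,\ge\, \limsup_{n\to\infty} h_{\mu_{\be_n}} \,\ge\, h_\nu.$$
As $\nu$ was arbitrary in $\CM_{max}$ and $\mu_\infty$ itself belongs to $\CM_{max}$, this forces
$$h_{\mu_\infty} \,=\, \max\bigl\{h_\nu : \nu\in\CM_{max}\bigr\},$$
which is the desired conclusion.

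I do not expect a genuine obstacle here: both tools (the variational characterization of $\mu_\be$ and the upper semicontinuity of $h_\cdot$) are stated explicitly earlier in the chapter, and the fact that $\mu_\infty$ is itself $A$-maximizing was already proved in Proposition \ref{prop-asymptopress}. The only mild care needed is to observe that the maximum on the right-hand side is actually attained, which follows because $\CM_{max}$ is a nonempty, weak-$*$ closed subset of the compact set $\CM_\s$ and $h_\cdot$ is upper semicontinuous on it.
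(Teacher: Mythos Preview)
Your proof is correct and uses the same two ingredients as the paper: the variational characterization of $\mu_\be$ and the upper semicontinuity of entropy. The paper's argument is marginally less direct in that it routes through the intercept $h$ of the asymptote of $\be\mapsto\CP(\be)$, showing $h_{\max}\le h$ and then $h\le h_{\mu_\infty}$, whereas you bypass this intermediate quantity by directly establishing $h_{\mu_\be}\ge h_\nu$ for every $\nu\in\CM_{max}$ and every $\be$; the paper's detour has the side benefit of identifying the asymptote intercept as $h_{\max}$, which is used just after Remark~\ref{rem-entropyresidual}.
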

\begin{proof}
First, note that $\CM_{max}$ is closed, thus compact in $\CM_{\s}$. The entropy is upper-semi-continuous, then, there exists measures in $\CM_{max}$ with maximal entropy.

Let $\mu_{\8}$ be such a measure, and set $h_{max}=h_{\mu_{\8}}$.
Then, we get
\begin{equation}\label{equ1-selecentropymax}
h_{\max}+\be.m(A)=h_{\mu_{\8}}+\be.\int A\,d\mu_{\8}\le \CP(\be).
\end{equation}

We remind that $\be\mapsto\CP(\be)$ admits an asymptote as $\be\to+\8$, which means that there exists some $h$ such that
$$\CP(\be)=h+\be.m(A)+o(\be),$$
with $\disp\lim_{\be\to+\8}o(\be)=0$.
Then, Inequality \eqref{equ1-selecentropymax}   shows that $h_{max}\le h$.
Now consider $\mu_{\8}$ any accumulation point for $\mu_{\be}$.
Theorem \ref{prop-asymptopress} says that $\mu_{\8}$  is $A$-maximizing. On the other hand,
$$h+\be.m(A)+o(\be)=\CP(\be)=h_{\mu_{\be}}+\be.\int A\,d\mu_{\be}\le h_{\mu_{\be}}+\be.m(A)$$ yields
$\disp h_{max}\ge h_{\mu_{\8}}\ge \limsup_{\be\to+\8}h_{\mu_{\be}}\ge h\ge h_{max}$.
\end{proof}

\begin{remark}
\label{rem-entropyresidual}
In Statistical Mechanics, $h_{max}$ is called the residual entropy: it is the entropy of the system at temperature zero, when it reaches its ground state.
$\blacksquare$\end{remark}
 Consequently, if the Mather set admits a unique measure of maximal entropy, $\mu_{\be}$ converges to this measure, as $\be\to+\8$.

 We have seen in Proposition \ref{prop-asymptopress} that the pressure function $\be\mapsto \CP(\be)$ admits an asymptote as $\be\to+\8$. Actually this asymptote is given by
 $$h_{max}+\be.m(A).$$

Theorem \ref{theo-groundstatemaxentrop} justifies the study of the set of maximizing orbits. This is the goal of the next section.

\section{The Aubry set and the subcohomological inequality}

\subsection{The Aubry set and the Ma\~n\'e potential}

\begin{theorem}
\label{theo-aubryset}
There exists an invariant set $\CA$, called the {\em Aubry set}, which satisfies the following properties:
\begin{enumerate}
\item $\CA$ contains the Mather set, or, equivalently, any $A$-maximizing measure has its support in $\CA$.
\item Restricted to $\CA$, $A-m(A)$ is equal to a  Lipschitz coboundary.
\end{enumerate}
\end{theorem}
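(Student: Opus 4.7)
The plan is to construct $\CA$ from a calibrated Lipschitz subaction for $A$. By Proposition~\ref{prop-regulholde} (applied with $\al=1$, since we are in the Lipschitz setting of this chapter), the family $\{\tfrac{1}{\beta}\log H_{\beta}\}_{\beta>0}$ is uniformly Lipschitz, so Arzel\`a--Ascoli produces a subsequential uniform limit $u$ that is itself Lipschitz. The argument recalled just after equation~\eqref{equ1-subcohomo} identifies $u$ as a calibrated subaction for $A$; equivalently, the function
$$g(x):=m(A)+u(\s(x))-u(x)-A(x)$$
is Lipschitz, non-negative, and vanishes at some preimage of every point. I would then set
$$\CA:=\bigcap_{n\ge 0}\s^{-n}\bigl(\{g=0\}\bigr),$$
which is closed by continuity of $g$, and forward-invariant: if $x\in\CA$ then $g(\s^{m}(x))=0$ for every $m\ge 0$, hence in particular for every $m\ge 1$, so $\s(x)\in\CA$.

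For property~(1), fix any $A$-maximizing measure $\mu$. Since $\mu$ is $\s$-invariant and $u$ is continuous, $\int (u\circ\s-u)\,d\mu=0$; combined with $\int A\,d\mu=m(A)$ this yields $\int g\,d\mu=0$. Because $g$ is continuous and non-negative, the open set $\{g>0\}$ has zero $\mu$-measure, so $g\equiv 0$ on $\supp(\mu)$. The support of an invariant measure being itself invariant, one gets $\s^{n}(\supp(\mu))\subset\{g=0\}$ for every $n\ge 0$, i.e.\ $\supp(\mu)\subset\CA$. Non-emptiness of $\CA$ comes for free, since $\CM_{max}\neq\emptyset$ by compactness of $\CM_\s$ and $w^*$-continuity of $\nu\mapsto\int A\,d\nu$. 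Property~(2) is then almost tautological: on $\CA$ one has $g\equiv 0$, so
$$A(x)-m(A)=u(\s(x))-u(x)\qquad\text{for every }x\in\CA,$$
which exhibits $A-m(A)$ restricted to $\CA$ as the restriction of the globally defined Lipschitz coboundary $u\circ\s-u$.

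The main obstacle is conceptual rather than computational: it lies in producing a Lipschitz calibrated subaction $u$. In the plan above I lean on Proposition~\ref{prop-regulholde}; a self-contained route would apply a Schauder-type fixed-point argument to the nonlinear max-plus operator $(Tu)(x):=\max_{\s(y)=x}[A(y)+u(y)]-m(A)$ on the equicontinuous family of Lipschitz functions modulo additive constants, using that $m(A)$ is precisely the constant making the zero-temperature analogue of the Perron eigenvalue problem solvable. A secondary subtlety---not required by the statement but worth flagging---is that the set $\CA$ defined this way is \emph{a priori} dependent on the particular $u$ chosen; the deeper fact that $\CA$ is intrinsic, being characterized by the vanishing of the Ma\~n\'e action potential $S(x,x)$, would require an additional layer of argument in the spirit of weak KAM theory.
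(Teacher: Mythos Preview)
Your proof is correct but proceeds along a different line than the paper's. The paper defines the Aubry set intrinsically as
$\CA:=\{x:\ S_A(x,x)=0\}$
via the Ma\~n\'e potential, and then has to work to verify both properties: it uses the recurrence argument of Proposition~\ref{prop-return} to place the support of each ergodic maximizing measure inside $\CA$, squeezes $0\le S_A(x,x)\le g(x)\le 0$ to get $g\equiv 0$ on $\CA$, and checks $\s$-invariance by a direct manipulation of $S_A$. Your route---taking $\CA=\bigcap_{n\ge 0}\s^{-n}(\{g=0\})$---makes the theorem essentially a two-line consequence of $\int g\,d\mu=0$ and the invariance of supports, which is cleaner for what is literally asked. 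The price, which you already flag, is that your $\CA$ depends \emph{a priori} on the chosen subaction $u$; the paper's Ma\~n\'e-potential definition is canonical and is the one the subsequent chapter on the Peierl's barrier actually builds on (Theorem~\ref{theo-peierlsuba} characterizes $\CA$ by $h(x,x)=0$ and uses this to define irreducible components). So your argument establishes the theorem as stated, but the paper's longer route is laying groundwork for what comes next.
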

This theorem helps to answer the question: how can one detect maximizing orbits ?
The terminology is borrowed from the Aubry-Mather Theory (see \cite{Mat} \cite{CI} \cite{Fathi} \cite{Mane} \cite{Gomes1}).

\begin{definition}\label{def-manepot}
Given $A$, the  Ma\~n\'e potential is:
$$
S_A(x,y):=$$
$$\lim_{\epsilon \to 0}
\Big[\sup\Big\{\sum_{i=0}^{n-1}\big[A(\sigma^i(z))-m_A\big]\;\Big|\;n\in\mathbb{N},\;\sigma^n(z)=y,\;d(z,x)<\epsilon\Big\} \Big].
$$
\end{definition}

The Ma\~n\'e potential $S_A(x,y)$ describes in some sense the "optimal way" to go from  $x$ to $y$, following the dynamics, and also maximizing the ``cost'' given by $A$. For a fixed $\epsilon$, the supremum may be not attained by a finite piece of orbit.

We point out that for a fixed $x$ the function $y \mapsto S(x,y)$ is Holder. Nevertheless, for a fixed $y$ the function $x \mapsto S(x,y)$ is not  necessarily continuous.

\begin{proof}[Proof of Theorem \ref{theo-aubryset} ]
The proof has several steps.
We set
$\CA:=\{\,x\in \Om\,|\, S_{A}(x,x)=0\,\}$.
Let us check that $\CA$ satisfies the required conditions.

$\bullet$ some useful equations.

\noindent
Let us pick any calibrated subaction $V$. We remind that this means
$$V(\s(x)) \ge A(x)-m(A)+V(x),$$
which can be written on the form
\begin{equation}
\label{equ-calib-gneg}
A(x)=m(A)+V\circ \s(x)-V(x)+g(x),
\end{equation}
where $g$ is a non-positive Lipschitz-continuous function.

Let $z$ be in $\Om$. Equality \eqref{equ-calib-gneg} yields
\begin{equation}
\label{equ-cobor1}
S_{n}(A-m(A))(z)=S_{n}(g)(z)+V\circ \s^{n}(z)-V(z).
\end{equation}
Now, pick $y$ and consider $z$ such that $\s^{n}(z)=y$. This yields
$$  S_{n}(A-m(A))(z)\!=\!S_{n}(g)(z)+V(y)-V(z)\! \leq$$
\begin{equation}
\label{equ3-cobor}
\hskip -.5cm g(z)+V(y)-V(z)\!\leq V(y)-V(z).
\end{equation}
Therefore, for every $x$ and $y$, the continuity of $V$ shows
\begin{equation}
\label{equ-cobor2-mane}
S_{A}(x,y)\le g(x)+V(y)-V(x)\le V(y)-V(x).
\end{equation}

$\bullet$ The Aubry set contains the Mather set.

\noindent
We show that if $\mu$ is a $A$-maximizing ergodic measure, then, $\supp \mu\subset \CA$. This will imply, in particular,  that $\CA$ is not empty. Consider $x$ a generic point for $\mu$ (and, also in the support of $\mu$). As
$V$ is Lipschitz,  then, Equality \eqref{equ-cobor1} used for $z:=x$ yields $\disp\lim_{\ninf}\frac1nS_{n}(A-m(A))(x)=0$.
Consequently, $g$ is a non-positive function satisfying $\disp\int g\,d\mu=0$. Since $\mu$ is ergodic and $g$ is continuous, we have  that $g_{|\supp\mu}\equiv 0$. This implies, as we have shown before,
$$A(x)=m(A)+V\circ\s(x)-V(x),$$
for every $x\in \supp\mu$.

Now, $x$ returns infinitely many times as closed as wanted to itself (see Proposition \ref{prop-return}). This yields that for a given $\eps=\frac1{2^{N}}$, there are infinitely many $n_{i}$ such that
$$x=x_{0}x_{1}\ldots x_{n_{i}-1}x_{0}x_{1}\ldots x_{N-1}\ldots.$$
Equivalently, this means that the word $x_{0}\ldots x_{N-1}$ appears infinitely many times into $x=x_{0}x_{1}\ldots$.

This yields that for such $n_{i}$, the word
$$z=x_{0}x_{1}\ldots x_{n_{i}-1}x,$$
coincides with $x$ for at least $n_{i}+N$ digits. Lipschitz regularity for $A$ and $g$ yields
$$\left|S_{n_{i}}(A)(z)-S_{n_{i}}(A)(x)\right|\le C.\frac1{2^{N}}, \text{ and }\left|S_{n_{i}}(g)(z)-S_{n_{i}}(g)(x)\right|\le C.\frac1{2^{N}}.$$
Remind that $g\circ \s^{k}(x)=0$ for every $k$ because $x$ belongs to $\supp\mu$. Remember that  $V$ is  Lipschitz continuous.
Therefore, we get
$$\left|S_{n_{i}}(A-m(A))(z)\right|\le C\frac1{2^{N}}, \s^{n_{i}}(z)=x\text{ and }d(z,x)\le \frac1{2^{n_{i}+N}}.$$
This yields that $S_{A}(x,x)\ge 0$ and Inequality \eqref{equ-cobor2-mane} yields $S_{A}(x,x)\le 0$. Therefore, $x$ belongs to $\CA$.

$\bullet$  We prove that $A-m(A)$ restricted to $\CA$  is a coboundary.

\noindent
Let $x$ be in $\CA$. Note that Inequality \eqref{equ-cobor2-mane} yields
$$0\le S_{A}(x,x)\le g(x)\le 0,$$
which shows that $g$ is equal to 0 on $\CA$. Therefore, $A-m(A)$ is a coboundary on $\CA$.

The last point to check is that $\CA$ is $\s$-invariant.
Let $x$ be in $\CA$. Take a fixed $\eps_{0}>0$ and suppose that  $z$ satisfies $\s^{n}(z)=x$ and $d(x,z)<\eps$.
Then,
\begin{eqnarray*}
S_{n}(A)(z)&= & A(z)+A\circ \s(z)+\ldots +A\circ \s^{n-1}(z)\\
&=& A(\s(z))+\ldots +A\circ \s^{n-2}(\s(z))+A(z)\\
&=& A(\s(z))+\ldots +A\circ \s^{n-2}(\s(z))+A(x)+A(z)-A(x)\\
&=& S_{n}(A)(\s(z))+ A(z)-A(x).
\end{eqnarray*}
Note that $d(\s(z),\s(x))=2d(z,x)<2\eps$ and $|A(z)-A(x)|\le C.d(z,x)\le 2C\eps$. Taking the supremum over all the possible $n$, for fixed $\eps$, and then letting $\eps\to0$ we get
$$S_{A}(x,x)=S_{A}(\s(x),\s(x)).$$
\end{proof}

\begin{proposition}
\label{prop-aubry-compact}
The Aubry set is compact.
\end{proposition}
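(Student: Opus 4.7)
The plan is as follows. Since $\Omega$ is already compact, it suffices to prove that $\CA$ is closed. So take a sequence $(x_n)$ in $\CA$ converging to some $x\in\Omega$, and aim to show $S_A(x,x)=0$. Because the inequality $S_A(y,y)\le g(y)\le 0$ was established in the proof of Theorem \ref{theo-aubryset} for every $y$, only the reverse bound $S_A(x,x)\ge 0$ requires work.

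Fix $\epsilon,\delta>0$. For each $n$ I would use $x_n\in\CA$ to produce a near-orbit $z_n$ with $\sigma^{k_n}(z_n)=x_n$, $d(z_n,x_n)<\epsilon$, and $S_{k_n}(A-m(A))(z_n)>-\delta/2$. Writing $z_n=a_{n,0}\ldots a_{n,k_n-1}x_n$ in concatenated form, the central construction is to \emph{splice}: replace the tail $x_n$ by $x$ to obtain $z:=a_{n,0}\ldots a_{n,k_n-1}x$. Then $\sigma^{k_n}(z)=x$ automatically. Admissibility of the new concatenation uses that $x_n$ and $x$ share their first digit, which holds once $n$ is large enough that $x_n$ and $x$ agree in the first $M$ coordinates, for $M$ arbitrarily large. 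The estimate $d(z,x)<\epsilon$ follows provided $M$ exceeds the depth $N'\approx -\log_2\epsilon$ determined by $\epsilon$.

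The heart of the argument is to control the change in the Birkhoff sum under the splice. For each $0\le i\le k_n-1$, the iterates $\sigma^i z$ and $\sigma^i z_n$ coincide up to depth $k_n-1-i+M$, so Lipschitz continuity of $A$ gives $|A(\sigma^i z)-A(\sigma^i z_n)|\le C\cdot 2^{-(k_n-1-i+M)}$. Summing the resulting geometric series yields a total error bounded by $2C\cdot 2^{-M}$, which is $<\delta/2$ once $M$ is large. Hence $S_{k_n}(A-m(A))(z)>-\delta$, producing the required pseudo-closing orbit at $x$ and forcing $S_A(x,x)\ge 0$.

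The main obstacle is precisely this splicing step: one must transfer a near-closing orbit at $x_n$ into one at $x$ without destroying the Birkhoff-sum estimate. This is where Lipschitz (or H\"older) regularity of $A$ is essential, since for a merely continuous potential the modulus of continuity need not be summable and the accumulated error across $k_n$ iterates could blow up as $k_n\to\infty$, preventing the argument from closing.
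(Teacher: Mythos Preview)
Your proof is correct and takes a genuinely different route from the paper's.

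The paper does not argue sequentially. Instead, it rewrites $S_A(x,x)$ in terms of the non-positive function $g$ (from $A=m(A)+V\circ\sigma-V+g$) and establishes the characterization: $S_A(x,x)=0$ if and only if for \emph{every} $n$ there exists an $n$-preimage $z$ of $x$ with $S_n(g)(z)=0$. It then shows that the failure of this condition is open (if some $n_0$ witnesses $S_{n_0}(g)(z)<0$ for all $n_0$-preimages of $x$, the same $n_0$ works for all nearby $x'$ by Lipschitz continuity of $g$ and finiteness of the preimage set). Thus $\CA$ is closed.

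Your argument is the more direct sequential-closure proof: take $x_n\to x$ in $\CA$, pull back a near-closing orbit at $x_n$, and splice the tail $x_n\mapsto x$ using a geometric-series Lipschitz bound on the Birkhoff sum. This is perfectly valid and arguably more elementary; it also makes transparent exactly where the H\"older/Lipschitz regularity enters. The paper's approach, on the other hand, yields as a by-product a clean intrinsic description of $\CA$ via the vanishing locus of $S_n(g)$ along preimages, which is useful later when analyzing the structure of $\CA$. Two small cosmetic remarks: in the full shift every concatenation is admissible, so your ``admissibility'' comment is only needed if one works in a subshift of finite type; and your depth count $k_n-1-i+M$ is off by one (it is $k_n-i+M$), which of course does not affect the geometric-series bound.
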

\begin{proof}
The definition of $S_{A}$ yields
$$S_{A}(x,x)=\lim_{\eps\to0}\sup_{n}\{S_{n}(g)(z),\ \s^{n}(z)=x\ d(x,z)<\eps\}.$$
The function $g$ is non-positive, thus $S_{n}(g)(z)$ is always non-positive.
We claim that
 $S_{A}(x,x)=0$ holds, if and only if, for every $n$, there exists $z$, such that $\s^{n}(z)=x$ and $S_{n}(g)(z)=0$.
Now, Lipschitz continuity of $g$ shows that this condition is closed: if $x$ does not satisfies this condition, there exists $n$ such that for every $z$, satisfying $\s^{n}(z)=x$,  we get $S_{n}(g)(z)<0$. This is obviously true for every $x'$ sufficiently close to $x$.
Therefore, the set of $x$ such that $S(x,x)=0$ is a closed set.

We now prove the claim. If the later condition is not satisfied, then pick $n_{0}$ such that for every $n_{0}$-preimage $z$ of $x$, $S_{n_{0}}(g)(z)<0$. The set of $n_{0}$-preimages is finite, thus, there exists $\eps$ such that for every $z$
 $$\s^{n_{0}}(z)=x\ \Longrightarrow S_{n_{0}}(g)(z)<-\eps.$$
Now, for $n>n_{0}$ and $z$ such that $\s^{n}(z)=x$,
$$S_{n}(g)(z)=S_{n-n_{0}}(g)(z)+ S_{n_{0}}(g)(\s^{n-n_{0}}(z))\le S_{n_{0}}(g)(\s^{n-n_{0}}(z))<-\eps.$$
This shows $S_{A}(x,x)\le -\eps<0$.

Conversely, assume that $S_{A}(x,x)=0$. Pick $n_{0}$ and then consider a sequence of $n$-preimages $z_{n}$ of $x$ converging to $x$ such that
$$S_{n}(g)(z_{n})\to_{\ninf}0.$$
Consider the new sequence of points $\xi_{n}:=\s^{n-n_{0}}(z_{n})$. Each $\xi_{n}$ is a $n_{0}$-preimage of $x$, then,  there exists $z$, such that, $\s^{n_{0}}(z)=x$, and for infinitely many $n$, we have $\xi_{n}=z$.
Therefore, considering only these $n$'s we get
$$0\ge S_{n_{0}}(g)(z)\ge S_{n}(g)(z_{n}),$$
and the right  hand side term goes to $0$, if $n\to+\8$. This yields $S_{n_{0}}(g)(z)=0$, and this holds for every $n$.

\end{proof}

\subsection{The Aubry set for locally constant potentials}\label{subsec-aubrylocconst}
Here we consider a potential $A$ depending on 2 coordinates, that is: $A(x_{0}x_{1}x_{2}\ldots)=A(x_{0},x_{1})$. We set $A(i,j)$ for $A(x)$ with $x=ij\ldots$.

Note that in that case $H_{\be}$ is constant in each 1-cylinder set $[i]$. This also holds for $\disp\frac1\be\log H_{\be}$, and, then, for any accumulation point $V$.
Indeed, recall that $L H_{\be} = \la_\be H_{\be}$.
We can get the function $H_{\be}$ by the expression
$$
    H_{\be}(x)  = \lim_{n \to +\infty}{\frac{1}{n} \sum_{k=0}^{n-1} \frac{L^k 1(x)}{\la_{\be}^k} }.
$$
Note that
$$
   L 1(x_0 x_1 \ldots ) = \sum_{i \in \{1, \ldots, d\}} e^{\be A(i, x_0)} 1(x_0 x_1 \ldots) =
    \sum_{i \in \{1, \ldots, d\}} e^{\be A(i, x_0)}.
$$
Iterating $k$ times the action of the operator $L$, we get
$$
  L^k 1(x_0 x_1 \ldots) = \sum_{i_k, i_{k-1}, \ldots,  i_1 \in \{1, \ldots, d\}}
   e^{\be(A(i_k, i_{k-1})+ \cdots+ A(i_1, x_0))}
$$
Then, the function $H_{\be}(x_0 x_1 \ldots)$ depends only on $x_0$ and so it is
constant in each cylinder of size one $[i]$, as claimed.

If $x$ is in $\Om$ and $z$ is such that $\s^{n}(z)=x$ and, moreover, $d(x,z)<1$, then, $V(x)=V(z)$. Hence,
$$S_{n}(A-m(A))(z)=S_{n}(g)(z)\le 0.$$

If $y$ is in $\CA$, $g_{|\CA}\equiv0$, then $S_{n}(A-m(A))(y)=V\circ\s^{n}(y)-V(y)$. If $\s^{n}(y)$ and $y$ are in the same 1-cylinder, then $S_{n}(A-m(A))(y)=0$.
Furthermore, if $z$ is the periodic orbit given by the concatenation of $y_{0}y_{1}\ldots y_{n-1}$, then, we get $S_{n}(A-m(A))(z)=S_{n}(A-m(A))(y)$, because all the transitions $z_{i}\to z_{i+1}$ are the same as for $y$ (for $i\le n-1$).

This shows that $m(A)$ is reached by periodic orbits.

\begin{definition}
\label{def-simpleloop}
A periodic orbit obtained as the concatenation of $z_{0}\ldots z_{n-1}$ is said to be {\em simple} if all the  digits $z_{i}$ are different.
\end{definition}

\Eg
$123123123\ldots $ is a simple periodic orbit of length 3. By the other hand $121412141214\ldots$ is not a simple periodic orbit.

One simple periodic orbit of length $n$ furnishes $n$ {\em bricks}, which are the words producing the $n$ points of the orbit :

\Eg
The bricks of $123123123\ldots$   are 123, 231 and 312.

Then, the Aubry set $\CA$ is constructed as follows :

\begin{enumerate}
\item List all the simple periodic orbits. This is a finite set.
\item Pick the ones such that their Birkhoff means are maximal.  This maximal value is $m(A)$. Such a simple periodic orbit is also called maximizing.
\item Consider the associated bricks for all these simple maximizing periodic orbits.
\item The set $\CA$ is the subshift of finite type constructed from these bricks:
\begin{enumerate}
\item Two bricks can be combined if they have a common digit. On one of the simple loops one glues the other simple loop. The bricks $x_{0}x_{1}\ldots x_{n}$ and $x_{n}y_{1}\ldots y_{k}$ produce the periodic orbit $x_{0}x_{1}\ldots x_{n}y_{1}y_{2}\ldots y_{k}x_{n}x_{0}x_{1}\ldots x_{n}y_{1}y_{2}\ldots y_{k}x_{n}\ldots$
\Eg It is easy to see that 123  and 345  produce the new orbit $123453123453\ldots$.
\item  $\CA$ is the closure of the set of all the periodic orbits obtained by this process.
\end{enumerate}
\end{enumerate}

We can also define $\CA$ from its transition matrix.
If $i$ does not appear in any of the bricks, we set $T_{ij}=0$, for every $j$.
If $i$ appears in a brick, set $T_{ij}=1$, if $ij$ appears in a brick (for $j\neq i$). If $i$ is also a brick (that means that the fixed point $iiiiii\ldots$ is a maximizing orbit) set also $T_{ii}=1$. Set $T_{ij}=0$ otherwise.
Then, $\CA=\S_{T}$.

\Eg
If the bricks are (up to permutations) $abc$,  $cde$, $fgh$, $gi$ and $fj$, the transition matrix  restricted to these letters is
$$T=\left(\begin{array}{cccccccccc}0 & 1 & 0 & 0 & 0 & 0 & 0 & 0 & 0 & 0 \\0 & 0 & 1 & 0 & 0 & 0 & 0 & 0 & 0 & 0 \\1 & 0 & 0 & 1 & 0 & 0 & 0 & 0 & 0 & 0 \\0 & 0 & 0 & 0 & 1 & 0 & 0 & 0 & 0 & 0 \\0 & 0 & 1 & 0 & 0 & 0 & 0 & 0 & 0 & 0 \\0 & 0 & 0 & 0 & 0 & 0 & 1 & 0 & 0 & 1 \\0 & 0 & 0 & 0 & 0 & 0 & 0 & 1 & 1 & 0 \\0 & 0 & 0 & 0 & 0 & 1 & 0 & 0 & 0 & 0 \\0 & 0 & 0 & 0 & 0 & 0 & 1 & 0 & 0 & 0 \\0 & 0 & 0 & 0 & 0 & 1 & 0 & 0 & 0 & 0\end{array}\right).$$

\begin{figure}[htbp]
\begin{center}
\includegraphics[scale=0.5]{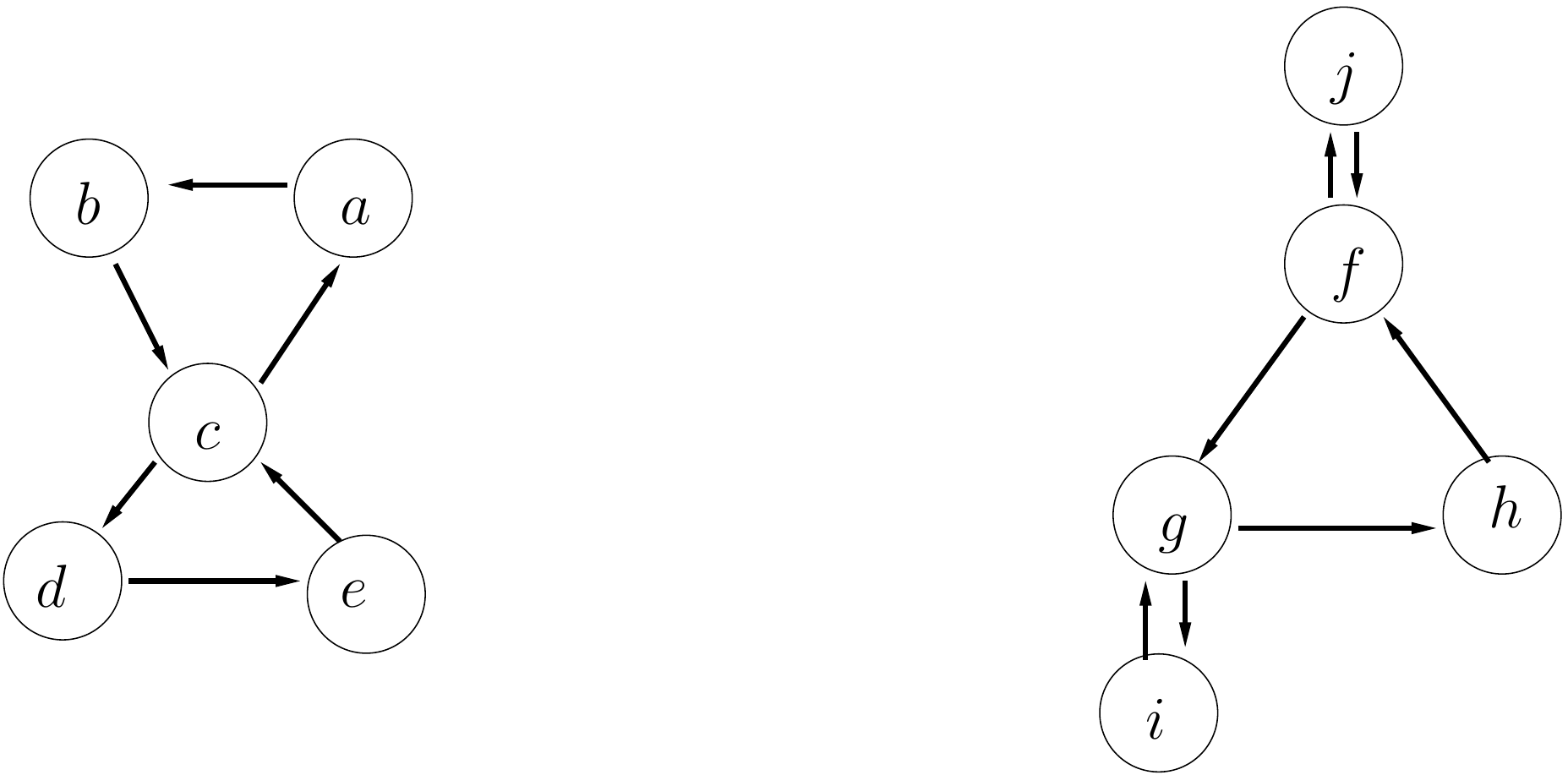}
\caption{The graph for $T$}
\label{fig-graphT}
\end{center}
\end{figure}

The set $\CA$ is a subshift of finite type. It can thus be decomposed in irreducible components, say $\CA_{1}\ldots ,\CA_{r}$. Each component admits a unique measure of maximal entropy, say $\mu_{1},\ldots, \mu_{r}$. Let $h_{i}$ be the associated entropies. We assume that the order has been chosen in such way that
$$h_{1}\ge h_{2}\ge \ldots \ge h_{r}.$$
In that case, the topological entropy for $\CA$ is $h_{1}$. More precisely, assume that $j_{0}$ is such that
$$h_{1}=h_{2}=\ldots = h_{j_{0}}>h_{j_{0}+1}\ge h_{j_{0}+2}\ldots.$$
Then, $\CA$ admits exactly $j_{0}$ ergodic measures of maximal entropy $h_{1}$. Any ground state is a convex combination of these $j_{0}$ ergodic measures.

\bigskip

In that special case, it is proved that there is only one ground state:
\begin{theorem}[see \cite{Bremont,Lep1,CGU}]
\label{th-cv-potloc}
If $A$ depends only on two coordinates, then $\mu_{\be}$ converges, as $\be\to+\8$.
\end{theorem}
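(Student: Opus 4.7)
My plan is to exploit the explicit matrix representation available when $A$ depends on two coordinates. In that case the transfer operator reduces to the $d\times d$ matrix $M^{\be}=(e^{\be A(i,j)})$, and the equilibrium measure $\mu_{\be}$ is the Markov measure built from the Perron data $(\lambda_{\be},\mathbf{l}^{\be},\mathbf{r}^{\be})$ via the formula boxed in Section~2.3.2. Convergence of $\mu_{\be}$ will therefore follow from convergence of these three objects, once a suitable cohomological normalization has been performed.

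First I would fix a calibrated subaction $V$ (also locally constant in this setting) and replace $A$ by the cohomologous potential $\tilde{A}(i,j):=A(i,j)-m(A)+V(i)-V(j)$. By \eqref{equ-calib-gneg}, $\tilde{A}\le 0$, with equality exactly on the transitions $(i,j)$ belonging to the Aubry set $\CA$ described in Section~\ref{subsec-aubrylocconst}. The change $A\leadsto\tilde{A}$ is a coboundary up to the constant $m(A)$, and its only effect on the Perron data is to rescale the eigenvalue by $e^{-\be m(A)}$ and the left/right eigenvectors by $e^{\mp\be V}$; in particular it does not change the Markov measure $\mu_{\be}$. Hence, writing $\tilde{M}^{\be}:=(e^{\be\tilde{A}(i,j)})$, one has the entrywise convergence $\tilde{M}^{\be}\to M^{\8}$ as $\be\to+\8$, where $M^{\8}$ is the $0$--$1$ adjacency matrix of the Aubry subshift.

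Next I would invoke standard Perron--Frobenius perturbation theory. The spectral radius of $M^{\8}$ equals $e^{h_{1}}$, where $h_{1}=\max_{i}h_{i}$ is the top topological entropy among the irreducible components $\CA_{1},\ldots,\CA_{r}$ of $\CA$, so the rescaled eigenvalue $\tilde{\lambda}_{\be}:=\lambda_{\be}e^{-\be m(A)}$ converges to $e^{h_{1}}$. In the case $j_{0}=1$ (a single maximal-entropy component $\CA_{i_{0}}$), $e^{h_{1}}$ is a simple isolated eigenvalue of $M^{\8}$; the normalized Perron eigenvectors of $\tilde{M}^{\be}$ converge to the unique ones of $M^{\8}$, and the explicit formula for $\mu_{\be}$ immediately gives convergence to the (Parry) measure of maximal entropy supported on $\CA_{i_{0}}$.

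The main obstacle, and the real content of the result, is the case $j_{0}>1$, where $e^{h_{1}}$ is a semisimple eigenvalue of $M^{\8}$ of multiplicity $j_{0}$ and its eigenspace is spanned by the Parry vectors of the individual top-entropy components. Here a second-order perturbative analysis is needed to identify the limiting directions of $\mathbf{l}^{\be}$ and $\mathbf{r}^{\be}$ inside this $j_{0}$-dimensional eigenspace. Writing $\tilde{M}^{\be}=M^{\8}+R(\be)$, every entry of the perturbation $R(\be)$ decays at its own exponential rate $-\tilde{A}(i,j)>0$; projecting the eigenvalue equation onto the top eigenspace and applying a Laplace-type argument, only those paths achieving the slowest total decay (i.e.\ the cheapest non-Aubry excursions connecting distinct maximal components) survive in the limit. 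They determine an explicit stochastic $j_{0}\times j_{0}$ effective matrix whose unique invariant probability yields the weights of the convex combination of the Parry measures $\mu_{i_{1}},\ldots,\mu_{i_{j_{0}}}$ that is the limit of $\mu_{\be}$. The max-plus algebra alluded to in the title of the book is the natural language for organizing this asymptotic calculation, since the interaction of exponentially small weights through non-Aubry paths is precisely what the $(\max,+)$ operations are designed to track.
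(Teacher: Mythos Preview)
Your reduction to the Perron data of the matrix $\tilde M^{\be}$ and the treatment of the non-degenerate case $j_{0}=1$ are correct and close in spirit to the cited proofs. The gap is in the last paragraph, where you handle $j_{0}>1$.

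The assertion that the ``effective stochastic $j_{0}\times j_{0}$ matrix'' has a \emph{unique} invariant probability is exactly the hard part, and it is not true in general after a single reduction. Nothing prevents the cheapest non-Aubry excursions from again producing ties: for instance, with four maximal-entropy components $\CA_{1},\ldots,\CA_{4}$, the slowest-decay paths may connect $\CA_{1}\leftrightarrow\CA_{2}$ and $\CA_{3}\leftrightarrow\CA_{4}$ with the same cost, while the connections between $\{\CA_{1},\CA_{2}\}$ and $\{\CA_{3},\CA_{4}\}$ are strictly more expensive. Your effective matrix is then block-diagonal and the limiting direction of $\mathbf r^{\be}$ is still undetermined. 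One must iterate the reduction, and prove that the procedure terminates with a unique limit; this is the content of the renormalization in \cite{CGU} and of the ``isolation rate'' hierarchy in \cite{Lep1}, and is essentially what the max-plus formalism organizes. A Laplace-type argument on exponential rates alone cannot resolve such ties: as the explicit computation in Chapter~\ref{chap-explexam} shows (Proposition~\ref{prop-funcg}), once the leading exponential scale $-\rho$ is fixed one still has to control the sub-exponential prefactor $g(\be)$, and its limit is obtained there only through an algebraic equation coming from the full characteristic polynomial, not from a first-order projection. Br\'emont's original argument \cite{Bremont} bypasses the iterative picture by working directly with the Puiseux expansion of $\lambda_{\be}$, which encodes all the successive scales at once. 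Your sketch needs one of these mechanisms to close.
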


\Eg
In the previous example, $\CA$ has two irreducible components. The first one has entropy $\frac13\log 2\sim 0.23$. The second one has entropy $\sim 0.398$. In that case the ground state is the unique measure of maximal entropy, which has support in the second irreducible component.

\subsection{Some consequences of Theorem \ref{th-cv-potloc}}
A dual viewpoint for the selection problem is the following: for every $\be$ the probability $\mu_{\be}$ has full support. One can say that a measure, in our case the probability $\mu_{\be}$, can be represented by  its set of generic points (see Remark \ref{rem-genericset}). This set of points is dense in $\Omega$.

For each $\beta>0$, this set remains dense in $\Omega$, but in the limit, when $\be\to+\8$, this set  accumulates  on the Aubry set $\CA$. More precisely, it is going to accumulate on the irreducible components which have positive weight for some ground state.

The selection problem is thus to determine what are the components of $\CA$ where this set of generic points for $\mu_\beta$ accumulates.

\medskip
It may happen that an irreducible component of $\CA$ has maximal entropy but has no weight at temperature zero. In \cite{Lep1}, the author introduced the notion of isolation rate between the irreducible components and showed that  only the most isolated component have weight at temperature zero.

\newpage

\chapter{The Peierl's barrier}\label{chap-peierl}

\section{Irreducible components of the Aubry set}

\subsection{Definition of the Peierl's barrier}
We have seen above that if $A$ depends on two coordinates the Aubry set $\CA$ is a  subshift of finite type. It thus has well-defined irreducible components, each one being the support of a unique  measure of (relative) maximal entropy. In the case of a more  general potential $A$, there are no reasons why $\CA$ should be a subshift of finite type. Actually it can be any invariant subset as it was shown above : pick any compact set $\CA$ and consider $A:=-d(.,\CA)$.

Given that, it is not obvious how define the irreducible components of $\CA$ and how to determine the measures of maximal entropy.

\begin{definition}
\label{def-peierl}
The Peierl's barrier  between $x$ and $y$ is defined by
$$h(x,y):=\lim_{\eps\to0}\limsup_{\ninf}\left\{S_{n}(A-m(A))(z),\ \s^{n}(z)=y\ d(x,z)<\eps\right\}.$$
\end{definition}

We remind that we got $A=m(A)+V\circ \s-V+g$, where $V$ is a calibrated subaction (obtained via a converging subsequence for $\disp\frac1\be\log H_{\be}$) and $g$ is a non-positive Lipschitz function.
Replacing this expression for $A$ into the definition of the Peierl's barrier we get
\begin{eqnarray*}
h(x,y)\hskip-0.1cm=\hskip-0.1cm\lim_{\eps\to0}\limsup_{\ninf}\left\{S_{n}(g)(z)+V(y)-V(z),\ \s^{n}(z)=y\ d(x,z)<\eps\right\}\\
=\lim_{\eps\to0}\limsup_{\ninf}\left\{S_{n}(g)(z),\ \s^{n}(z)=y\ d(x,z)<\eps\right\}+V(y)-V(x).
\end{eqnarray*}

This shows that to compute $h(x,y)$, we morally have to find a sequence of pre-images for $y$ which converges as fast as possible to $x$.

\noindent We shall see later that it is of prime importance the equality
$h(x,y)+h(y,x)=h(x,x)$.

\begin{theorem}
\label{theo-peierlsuba}
For any $x$, the Peierl's barrier $y\mapsto h(x,y)$ is a Lipschitz calibrated subaction. Moreover, $h(x,x)=0$, if and only if, $x$ belongs to $\CA$.
\end{theorem}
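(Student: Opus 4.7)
The plan is to verify the three properties separately, starting from the decomposition $A = m(A) + V\circ\sigma - V + g$ with $V$ a fixed calibrated subaction and $g\le 0$ Lipschitz. Substituting into the definition gives $h(x,y) = V(y) - V(x) + \lim_{\epsilon\to 0}\limsup_{n\to\infty}\sup\{S_n(g)(z):\sigma^n(z)=y,\ d(z,x)<\epsilon\}$, which isolates the non-positive $g$-term and makes the $V$-contribution manifestly Lipschitz.

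For Lipschitz regularity of $y\mapsto h(x,y)$, I would use that in the full shift every $n$-preimage $z = z_0\cdots z_{n-1} y$ of $y$ admits a canonical $n$-preimage $z' = z_0\cdots z_{n-1} y'$ of $y'$, satisfying $d(\sigma^k(z),\sigma^k(z')) = 2^{k-n} d(y,y')$ for $k<n$. Lipschitz continuity of $A$ then yields $|S_n(A)(z) - S_n(A)(z')| \le 2\,\mathrm{Lip}(A)\, d(y,y')$, a bound \emph{independent of $n$}; swapping the roles of $y,y'$ and passing to the limsup and to $\epsilon\to 0$ gives the Lipschitz estimate for $h(x,\cdot)$.

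For the calibrated-subaction equation $h(x,y) = \max_{\sigma(w)=y}\bigl[A(w) + h(x,w) - m(A)\bigr]$, I would use the decomposition $S_n(A-m(A))(z) = S_{n-1}(A-m(A))(z) + (A-m(A))(w)$ with $w := \sigma^{n-1}(z)$ a preimage of $y$. The inequality $\ge$ follows by lifting near-optimal $(n-1)$-preimages of $w$ (at distance $<\epsilon$ from $x$) to $n$-preimages of $y$. For $\le$, one takes a near-optimal $n$-preimage of $y$ and projects it to the finite set $\{w:\sigma(w)=y\}$ of cardinality $d$; pigeonhole extracts a single $w$ realizing the limsup along a subsequence, bounding $h(x,y)$ above by $A(w) + h(x,w) - m(A)$. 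Finiteness of the preimage set also ensures the supremum is attained.

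For the equivalence $h(x,x)=0 \Leftrightarrow x\in\CA$, the direction $(\Rightarrow)$ is immediate from the chain $h(x,x) \le S_A(x,x) \le V(x)-V(x) = 0$ (using $\limsup \le \sup$ and the bound \eqref{equ-cobor2-mane}), so $h(x,x)=0$ forces $S_A(x,x)=0$, i.e.\ $x\in\CA$. For $(\Leftarrow)$, I would combine the characterization of $\CA$ extracted from the proof of Proposition~\ref{prop-aubry-compact} (for every $n$ there exists an $n$-preimage $z_n$ of $x$ with $S_n(g)(z_n)=0$) with a concatenation argument: composing such preimages produces arbitrarily long pre-orbits of $x$ along which $S_\cdot(g)$ stays close to $0$ thanks to a bounded-distortion estimate coming from the Lipschitz regularity of $g$ and the exponential expansion of $\sigma$. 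The main obstacle is precisely this last step: upgrading the $\sup_n$ condition appearing in $S_A$ to the localized $\limsup_n$ appearing in $h$ requires delicate bookkeeping of how concatenation affects both the distance to $x$ and the Birkhoff sum of $g$, and a separate (easy) treatment of the case where $x$ is periodic, for which iterates of the period provide the required sequence directly.
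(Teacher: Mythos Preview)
Your arguments for Lipschitz continuity, for the calibrated--subaction equation, and for the implication $h(x,x)=0\Rightarrow x\in\CA$ are essentially the paper's, only spelled out a little more (your pigeonhole on the $d$ preimages is exactly what the paper is using when it writes ``taking the $z$'s and the $n$'s realizing the $\limsup$'').

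The gap is in your proof of $x\in\CA\Rightarrow h(x,x)=0$. The characterization you borrow from Proposition~\ref{prop-aubry-compact} says: for every $n$ there is \emph{some} $n$-preimage $z_{n}$ of $x$ with $S_{n}(g)(z_{n})=0$. It says nothing about $d(z_{n},x)$. Your concatenation scheme then tries to manufacture long preimages that are \emph{close to $x$} by composing the words $w_{n}$ (where $z_{n}=w_{n}x$), but concatenating $w_{n}w_{m}x$ is still not $\epsilon$-close to $x$ unless $w_{n}$ already begins with many digits of $x$, and nothing in the characterization forces that. Bounded distortion controls how $S_{\cdot}(g)$ changes under concatenation, but it cannot create proximity to $x$ out of nothing. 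So the ``main obstacle'' you flag is not just bookkeeping: as written, the ingredients do not combine to give the required sequence.

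The paper avoids this detour entirely. It works straight from the definition $S_{A}(x,x)=0$, which already contains the proximity information: for every $\rho>0$ there is $\epsilon_{0}$ such that for every $\epsilon<\epsilon_{0}$,
\[
\sup_{n}\bigl\{\,S_{n}(A-m(A))(z):\ \sigma^{n}(z)=x,\ d(z,x)<\epsilon\,\bigr\}\ge -\rho.
\]
The only issue is to prevent the near-optimal $n$'s from staying bounded. The paper's device is to shrink $\epsilon$ inductively: having found $n_{0}$ and $z_{0}$, choose
\[
\epsilon_{1}<\min\bigl\{\,d(x,z):\ \sigma^{n}(z)=x,\ n\le n_{0}\,\bigr\},
\]
which is positive when $x$ is not periodic (finitely many preimages of order $\le n_{0}$, none equal to $x$). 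Any $z_{1}$ realizing the supremum at scale $\epsilon_{1}$ must then have $n_{1}>n_{0}$. Iterating produces $n_{k}\to\infty$ with $d(z_{k},x)<\epsilon$ and $S_{n_{k}}(A-m(A))(z_{k})>-2\rho$, so the $\limsup$ defining $h(x,x)$ is $\ge -2\rho$ for every $\rho$, hence $h(x,x)\ge 0$. The periodic case is handled separately and trivially. This is short and needs neither Proposition~\ref{prop-aubry-compact} nor any concatenation.
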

\begin{proof}
Pick $x$ and $y$. Consider $z$ such that $\s^{n}(z)=y$, $d(x,z)<\eps$. Note that in $\Om$, $z$ is just the concatenation $z_{0}\ldots z_{n-1}y$. For $y'$ close to $y$ (namely $y_{0}=y'_{0}$),  we consider $z':=z_{0}\ldots z_{n-1}y'$ ; the Lipschitz regularity for $g$ yields
$$|S_{n}(g)(z)-S_{n}(g)(z')|\le C. d(y,y'),$$
for some constant $C$. If we consider a sequence of $z$ realizing the $\limsup$ and then take $\eps\to0$, we get
$$h(x,y)\le h(x,y')+C. d(y,y').$$
The same argument shows $h(x,y')\le h(x,y)+C.d(y,y')$, and, then $y\mapsto h(x,y)$ is Lipschitz continuous.

Let us show that this function is a calibrated subaction. In this way, consider $y$, $n$ and $\eps$, such that,
$\s^{n}(z)=y$ and $d(x,z)<\eps$. Then, $d(x,z)<\eps$ and $\s^{n+1}(z)=\s(y)$. Moreover,
\begin{equation}
\label{equ-suba-peierl}
S_{n+1}(A-m(A))(z)=S_{n}(A-m(A))(z)+A(y)-m(A).
\end{equation}
Consider a sequence of $z$ realizing the $\limsup$ for $h(x,y)$, now taking the limit along the subsequence, and, considering $\eps\to0$, we get
$$h(x,y)+A(y)-m(A)\le h(x,\s(y)).$$
This shows that $y\mapsto h(x,y)$ is a subaction. It remains to show that for a fixed value for $y'=\s(y)$, the equality is achieved by one of the preimages of $y'$.
This  follows from taking the $z$'s and the $n$'s in Equality \eqref{equ-suba-peierl} which realize the $\limsup$ for the left hand-side of the equality. Then, we get
$$h(x,y')\le h(x,y)+A(y)-m(A),$$
with $\s(y)=y'$. As the reverse inequality holds, the global equality follows.

\medskip
Both definitions of the Ma\~n\'e potential and the Peierl's barrier are very similar, except that in the first case we consider the supremum, and in the other case we consider the $\limsup$. This immediately shows that
$$h(x,y)\le S_{A}(x,y),$$
and then, $h(x,x)=0$ yields $S_{A}(x,x)=0$ (due to Inequality \ref{equ-cobor2-mane}), thus $x$ belongs to $\CA$.

Let us prove the converse. Take $x$ in $\CA$. Consider $\rho>0$ small and $\eps_{0}$, such that, for every $\eps<\eps_{0}$,
 \begin{equation}
\label{equ-def-manesioux}
\sup_{n}\left\{S_{n}(A-m(A))(z),\ \s^{n}(z)=x\ d(z,x)<\eps\right\}\ge -\rho.
\end{equation}
In the following, we assume for simplicity that $x$ is not periodic, but the proof can be easily extended to that case.

Inequality\eqref{equ-def-manesioux} holds for every $\eps$. We will construct a subsequence  $(n_{k})$ by induction. We pick any $\eps$ and consider $n_{0}$ realizing the supremum up to $-\rho$:
$$S_{n_{0}}(A-m(A))(z_{0})> -2\rho\text{ with }\s^{n_{0}}(z_{0})=x\text{ and }d(x,z_{0})<\eps.$$
Now we use Inequality\eqref{equ-def-manesioux} but with\footnote{Here we use that this distance is positive, that is that $x$ is not periodic.}

$$\disp\eps_{1}<\min\left\{d(x,z), \s^{n}(z)=y,\ n\le n_{0}\right\}.$$

We get $n_{1}>n_{0}$  and $z_{1}$, such that,
$$S_{n_{1}}(A-m(A))(z_{1})> -2\rho\text{ with }\s^{n_{1}}(z_{1})=x\text{ and }d(x,z_{1})<\eps_{1}.$$
We then proceed by induction with
$$\eps_{k+1}<\disp\min\left\{d(x,z), \s^{n}(z)=y\ n\le n_{k}\right\}.$$

Then, we have
\begin{eqnarray*}
-2\rho&< & S_{n_{k}}(A-m(A))(z_{k}),\text{ with }\s^{n_{k}}(z_{k})=y\text{ and }d(x,z_{k})<\eps\\
&\text{then}&\\
-2\rho&\le & \limsup_{\ninf}\left\{S_{n}(A-m(A))(z),\ \s^{n}(z)=x\ d(x,z)<\eps\right\}.
\end{eqnarray*}
This holds for every $0<\eps<\eps_{0},$ and then $h(x,x)\ge -2\rho$. Now, we take $\rho\to 0$, and we get $h(x,x)\ge 0$. The reverse equality is always true, so $h(x,x)=0$.
\end{proof}

\subsection{The irreducible components of the Aubry set}

Now, we show that the Peierl's barrier allows to define ``irreducible'' components of the Aubry set.
\begin{lemma}
\label{lem-inetrianpeierl}
For any $x$, $y$ and $z$
\begin{equation}
\label{equ-peierltriang}
h(x,y)\ge h(x,z)+h(z,y).
\end{equation}
\end{lemma}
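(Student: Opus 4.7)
The strategy is to realize a near-optimal path from a neighborhood of $x$ to $y$ by concatenating a near-optimal path from a neighborhood of $x$ to $z$ with a near-optimal path from a neighborhood of $z$ to $y$. Since $\Omega=\{1,\ldots,d\}^{\N}$ and preimages of a point are obtained by prepending an arbitrary finite word, such a concatenation always lives in $\Omega$.

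Fix $\varepsilon>0$ and $\delta>0$. By the definition of $h(x,z)$, there exist arbitrarily large $n_{1}$ and points $w_{1}$ with $\s^{n_{1}}(w_{1})=z$, $d(x,w_{1})<\varepsilon/2$, and
$$S_{n_{1}}(A-m(A))(w_{1})\ \ge\ h(x,z)-\delta/2.$$
Similarly, for any $\varepsilon_{2}>0$ there exist arbitrarily large $n_{2}$ and points $w_{2}$ with $\s^{n_{2}}(w_{2})=y$, $d(z,w_{2})<\varepsilon_{2}$, and
$$S_{n_{2}}(A-m(A))(w_{2})\ \ge\ h(z,y)-\delta/2.$$
Define $w$ as the concatenation of the first $n_{1}$ digits of $w_{1}$ with $w_{2}$, namely $w:=(w_{1})_{0}\ldots(w_{1})_{n_{1}-1}w_{2}$. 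Then $\s^{n_{1}}(w)=w_{2}$ and $\s^{n_{1}+n_{2}}(w)=y$, so $w$ is a valid $(n_{1}+n_{2})$-preimage of $y$.

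Because $w$ and $w_{1}$ share their first $n_{1}$ coordinates, $d(w,w_{1})\le d(z,w_{2})/2^{n_{1}}<\varepsilon_{2}/2^{n_{1}}$; hence $d(x,w)<\varepsilon/2+\varepsilon_{2}/2^{n_{1}}<\varepsilon$ as soon as $\varepsilon_{2}$ is small (e.g.\ $\varepsilon_{2}\le\varepsilon/2$). Splitting the Birkhoff sum,
$$S_{n_{1}+n_{2}}(A-m(A))(w)=S_{n_{1}}(A-m(A))(w)+S_{n_{2}}(A-m(A))(w_{2}),$$
and using the Lipschitz constant $C$ of $A$ together with $d(\s^{j}(w),\s^{j}(w_{1}))=2^{j}d(w,w_{1})$ for $0\le j\le n_{1}-1$, I can control the first term:
$$\left|S_{n_{1}}(A-m(A))(w)-S_{n_{1}}(A-m(A))(w_{1})\right|\ \le\ C\sum_{j=0}^{n_{1}-1}\frac{\varepsilon_{2}}{2^{n_{1}-j}}\ <\ C\varepsilon_{2}.$$
Choosing $\varepsilon_{2}\le\min(\varepsilon/2,\delta/(2C))$ at the outset and combining the three estimates gives
$$S_{n_{1}+n_{2}}(A-m(A))(w)\ \ge\ h(x,z)+h(z,y)-2\delta,$$
with $n_{1}+n_{2}$ as large as desired and $d(x,w)<\varepsilon$.

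Since this is achievable for arbitrarily large $n_{1}+n_{2}$, it follows that
$$\limsup_{n\to\infty}\sup\{S_{n}(A-m(A))(w):\s^{n}(w)=y,\ d(x,w)<\varepsilon\}\ \ge\ h(x,z)+h(z,y)-2\delta.$$
Letting $\varepsilon\to 0$ (the left-hand side is monotone in $\varepsilon$) yields $h(x,y)\ge h(x,z)+h(z,y)-2\delta$, and then $\delta\to 0$ finishes the proof. The main technical point is the interplay of limits: $\varepsilon_{2}$ must be chosen small enough to absorb both the Lipschitz error in the Birkhoff sum and the proximity requirement $d(x,w)<\varepsilon$, independently of how large $n_{1},n_{2}$ must be taken to realize the respective Peierl values.
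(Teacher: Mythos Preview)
Your proof is correct and follows essentially the same approach as the paper's: concatenate a near-optimal preimage of $z$ close to $x$ with a near-optimal preimage of $y$ close to $z$, use the cocycle relation to split the Birkhoff sum, and control the error in the first piece via the Lipschitz regularity of $A$. Your version is more explicit than the paper's about the interplay between the proximity parameter $\varepsilon_{2}$ (which must absorb both the Lipschitz error and the distance constraint) and the tolerances $\varepsilon,\delta$, but the underlying argument is identical.
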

\begin{proof}
Let $\eps>0$ be fixed.
Consider a preimage $y'$ of $y$ close to $z$ and a preimage $z'$ of $z$ close to $x$. For small $\eps$, $z'$ satisfies $z'=z'_{0}\ldots z'_{n-1}z$ and then $y'':=z'_{0}\ldots z'_{n-1}y'$ is also a preimage of $y$. The cocycle relation yields, if $\s^{m}(y')=y$,
$$S_{n+m}(A-m(A))(y'')=S_{n}(A-m(A))(y'')+S_{m}(A-m(A))(y').$$
The Lipschitz regularity shows that $S_{n}(A-m(A))(y'')$ differs from $S_{n}(A-m(A))(z')$ by the term $\pm C.\eps$.
If we assume that the $n$'s and the $m's$ are chosen to realize the respective $\limsup$, the term on the left-hand side is lower than the $\limsup$. Then, the Lemma is proved.
\end{proof}

\begin{lemma}
\label{lem-peierlinvariant}
For any $x$ in $\CA$, $h(x,\s(x))+h(\s(x),x)=0$.
\end{lemma}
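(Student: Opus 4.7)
The plan is to establish the equality through two opposite inequalities: the upper bound is immediate from the triangle inequality of Lemma \ref{lem-inetrianpeierl}, while the lower bound requires exploiting the $\limsup$ structure of the Peierl's barrier through a careful preimage construction.

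For the upper bound $h(x, \sigma(x)) + h(\sigma(x), x) \le 0$, I would simply invoke Lemma \ref{lem-inetrianpeierl} with intermediate point $z = \sigma(x)$, yielding
\[
h(x, \sigma(x)) + h(\sigma(x), x) \le h(x, x).
\]
Since $x \in \CA$, Theorem \ref{theo-peierlsuba} gives $h(x, x) = 0$, and this direction is complete.

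For the lower bound $h(x, \sigma(x)) + h(\sigma(x), x) \ge 0$, I would reuse the equality $h(x,x) = 0$ to produce approximating sequences. From Definition \ref{def-peierl} together with the uniform upper bound $S_n(A-m(A))(z) \le V(x) - V(z) + S_n(g)(z) \le C\,d(z,x)$ on admissible preimages, a diagonal extraction yields a sequence $(z_k, n_k)$ with $n_k \to \infty$, $\sigma^{n_k}(z_k) = x$, $d(z_k, x) \to 0$, and $S_{n_k}(A-m(A))(z_k) \to 0$. The key observation is that this same sequence can be shifted to test the two other barriers. On one hand, $z_k$ is itself a preimage of $\sigma(x)$ at level $n_k+1$ satisfying $d(z_k, x) \to 0$, and the cocycle identity gives
\[
S_{n_k+1}(A-m(A))(z_k) = S_{n_k}(A-m(A))(z_k) + A(x) - m(A) \;\longrightarrow\; A(x) - m(A),
\]
so that $h(x, \sigma(x)) \ge A(x) - m(A)$. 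On the other hand, the shifted sequence $\sigma(z_k)$ is a preimage of $x$ at level $n_k - 1$ with $d(\sigma(z_k), \sigma(x)) = 2d(z_k, x) \to 0$, and the cocycle identity in the reverse direction, combined with continuity of $A$ at $x$, gives
\[
S_{n_k-1}(A-m(A))(\sigma(z_k)) = S_{n_k}(A-m(A))(z_k) - (A(z_k) - m(A)) \;\longrightarrow\; m(A) - A(x),
\]
so that $h(\sigma(x), x) \ge m(A) - A(x)$. Summing the two estimates gives $h(x, \sigma(x)) + h(\sigma(x), x) \ge 0$, completing the proof.

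The main delicate point I expect is the legitimacy of the diagonal extraction against the nested $\lim_{\eps\to 0}\limsup_{n\to\infty}$ defining the Peierl's barrier: for each fixed $\eps$ only infinitely many $n$ admit admissible preimages with near-zero Birkhoff sums, so care is needed to verify that the shifted data $(z_k, n_k + 1)$ and $(\sigma(z_k), n_k - 1)$ indeed enter the $\limsup$'s defining $h(x,\sigma(x))$ and $h(\sigma(x),x)$. Once this bookkeeping is set up, however, the computation of each shifted Birkhoff sum is a one-line application of the cocycle identity and continuity of $A$.
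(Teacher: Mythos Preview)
Your proof is correct and follows essentially the same route as the paper's. The only cosmetic differences are that the paper invokes the already-established subaction property of $h(x,\cdot)$ (Theorem \ref{theo-peierlsuba}) to get $h(x,\sigma(x))\ge A(x)-m(A)$ directly rather than rederiving it from the sequence, and that for the bound on $h(\sigma(x),x)$ the paper phrases the construction ``backward'' (start with a preimage $y$ of $x$ near $\sigma(x)$, prepend $x_0$ to obtain $x'=x_0y$ near $x$, then choose $x'$ to realize the $\limsup$ for $h(x,x)$) whereas you phrase it ``forward'' (take $z_k$ realizing $h(x,x)=0$, then shift to $\sigma(z_k)$); these are the same computation. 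Your concern about the diagonal extraction is not a real obstacle: since each inner $\limsup_{n\to\infty}$ is taken over arbitrarily large $n$, you can always arrange $n_k\to\infty$ along with $d(z_k,x)\to 0$.
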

\begin{proof}
Lemma \ref{lem-inetrianpeierl} and Theorem \ref{theo-peierlsuba} show that
$$h(x,\s(x))+h(\s(x),x)\le h(x,x)=0.$$
It remains to prove it is non-negative.
The Peierl's barrier is a subaction thus
$$h(x,\s(x))\ge A-m(A)+h(x,x)=A-m(A).$$
Pick $\eps>0$ and  consider $y$, a preimage of $x$ $\eps$-close to $\s(x)$. Suppose $y=y_{0}\ldots y_{n-1}x$. Then, $x':=x_{0}y_{0}\ldots y_{n-1}x$  is a preimage of $x$ $\eps$-close\footnote{Actually $\frac\eps2$-close to $x$.} to $x$. In particular $A(x')=A(x)\pm C.\eps$. We emphasize that it is equivalent to have $x'\to x$ or $y\to\s(x)$.

We assume that these quantities are chosen in such way that $S_{n+1}(A-m(A))(x')$ converges to the $\limsup$, if $n$ goes to $+\8$.
Now,
$$S_{n+1}(A-m(A))(x')=A(x')-m(A)+S_{n}(A-m(A))(y).$$
Taking $n\to+\8$ and then $\eps\to0$, the right-hand side term is lower than  $h(\s(x),x)+A(x)-m(A)$ and the left-hand side term goes to $h(x,x)=0$. This yields
$$0\le h(\s(x),x)+A(x)-m(A)\le h(\s(x),x)+h(x,\s(x)).$$
\end{proof}

\begin{lemma}
\label{lem-peierltriangequal}
Let $x$ $y$ and $z$ be in $\CA$. If $h(x,y)+h(y,x)=0$ and $h(y,z)+h(z,y)=0$, then $h(x,z)+h(z,x)=0$.
\end{lemma}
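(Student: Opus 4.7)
The plan is to extract both inequalities $h(x,z)+h(z,x) \le 0$ and $h(x,z)+h(z,x) \ge 0$ directly from the triangle-type inequality in Lemma \ref{lem-inetrianpeierl}, combined with Theorem \ref{theo-peierlsuba} (which tells us $h(w,w)=0$ whenever $w \in \CA$). No new dynamical content beyond these two facts should be needed — the statement is essentially an algebraic consequence of the fact that $h$ is superadditive and vanishes on the diagonal over $\CA$.

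First I would establish the upper bound. Applying Lemma \ref{lem-inetrianpeierl} with the middle point $z$ and endpoints $x$, $x$, we get
\[
0 \;=\; h(x,x) \;\ge\; h(x,z) + h(z,x),
\]
using $x \in \CA$ and Theorem \ref{theo-peierlsuba} for the equality. This inequality already holds for any $z$, regardless of the hypotheses on $y$.

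Next I would establish the matching lower bound, and this is the step that actually uses the hypotheses on $y$. Applying Lemma \ref{lem-inetrianpeierl} twice, first to the pair $(x,z)$ with middle point $y$, and then to the pair $(z,x)$ with middle point $y$, gives
\[
h(x,z) \;\ge\; h(x,y) + h(y,z), \qquad h(z,x) \;\ge\; h(z,y) + h(y,x).
\]
Adding these and regrouping the right-hand side as $[h(x,y)+h(y,x)] + [h(y,z)+h(z,y)]$, both bracketed sums vanish by hypothesis, so $h(x,z)+h(z,x) \ge 0$.

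Combining the two bounds yields $h(x,z)+h(z,x)=0$. The main conceptual point — and the only step I expect to require care — is the observation that the superadditivity inequality in Lemma \ref{lem-inetrianpeierl} is precisely what is needed to turn the vanishing of symmetric sums into a transitive relation on $\CA$; no compactness or approximation arguments appear to be necessary.
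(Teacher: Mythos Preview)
Your proof is correct and essentially identical to the paper's: both obtain the upper bound from $h(x,x)\ge h(x,z)+h(z,x)$ with $h(x,x)=0$, and the lower bound by applying the triangle inequality twice through $y$ and summing. The only difference is that you present the upper bound first while the paper presents the lower bound first.
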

\begin{proof}
Inequality \eqref{equ-peierltriang} shows
$$h(x,z)+h(z,x)\ge h(x,y)+h(y,z)+h(z,y)+h(y,x)=0+0=0.$$
It also yields $h(x,z)+h(z,x)\le h(x,x)=0$.
\end{proof}
Lemma \ref{lem-peierltriangequal} proves that $h(x,y)+h(y,x)=0$ is a transitive relation. Since it is obviously symmetric and reflexive, then it is an equivalence relation on $\CA$.

\begin{definition}
\label{def-compo-aubry}
The  equivalence classes  for the relation
$$h(x,y)+h(y,x)=0,$$
are called irreducible components of $\CA$.
\end{definition}

Note that $x$ and $\s(x)$ belong to the same class, which shows that the classes are invariant.
The continuity for the Peierl's barrier has been proved with respect to the second variable for a fixed first variable. It is thus not clear that an irreducible component is closed.

\subsection{The locally constant case}
 If the potential is locally constant, we have seen in Subsection \ref{subsec-aubrylocconst} that the Aubry set $\CA$ is a subshift of finite type, for which the notion of irreducible component has already been defined (see \ref{def-irreducshift}). We have to check that the two notions coincide.

 We have seen that the irreducible components of a subshift of finite type are exactly the transitive components. We shall use this description to show that irreducible components of the Aubry (in the sense of the Peierl's barrier) set are the irreducible components (with respect to subshifts).

\begin{lemma}
\label{lem-manepeierllocal}
Assume $A$ depends only on two coordinates. Let $x$ and $y$ be in $\Omega$. Assume that $d(z,x)\le \frac14$, $\s^{n}(z)=y$, and, moreover, that there exists $1\le k<n,$ such that, $d(\s^{k}(z),x)\le \frac14$. Then,
$$S_{n}(A-m_{A})(z)\le S_{n-k}(A-m_{A})(\s^{k}(z)).$$
\end{lemma}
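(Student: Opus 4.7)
The plan is to reduce the inequality to the statement $S_k(A-m_A)(z) \le 0$ via the cocycle decomposition
\[
S_n(A-m_A)(z) \;=\; S_k(A-m_A)(z) + S_{n-k}(A-m_A)(\s^k(z)),
\]
and then to exploit the fact that $A$ depends on only two coordinates in order to close the finite piece of orbit $z_0 z_1 \ldots z_{k-1}$ into a genuine periodic orbit whose Birkhoff average is bounded above by $m_A$.

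First, I would unpack the hypothesis $d(z,x)\le \frac14$ and $d(\s^k(z),x)\le \frac14$. By the definition of the distance on $\Om$, these inequalities are equivalent to saying that $z$ and $\s^k(z)$ agree with $x$ in their first two digits; in particular $z_0=x_0=z_k$ and $z_1=x_1=z_{k+1}$. This is the only place where the assumption on distances enters.

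Next, I would introduce the periodic point $w:=(z_0 z_1\ldots z_{k-1})^{\8}$ of period at most $k$. The crucial verification is that the ``seam'' transition $z_{k-1}\to z_0$ in $w$ matches the actual transition $z_{k-1}\to z_k$ in $z$, which is true precisely because $z_0=z_k$. Since $A$ depends only on two coordinates, $A(\s^i(w))$ is determined by the pair $(w_i,w_{i+1})$ for $0\le i\le k-1$, and a direct comparison using $z_0=z_k$ (and $z_1=z_{k+1}$, which is not strictly needed here but confirms compatibility) yields
\[
S_k(A)(w)=S_k(A)(z).
\]

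Finally, since $w$ is periodic of period dividing $k$, the probability $\mu_w:=\frac1k\sum_{j=0}^{k-1}\de_{\s^j(w)}$ is $\s$-invariant, and by the very definition of $m_A$,
\[
\frac1k S_k(A)(w)\;=\;\int A\, d\mu_w\;\le\; m_A.
\]
Combining these two displayed equalities gives $S_k(A-m_A)(z)=S_k(A-m_A)(w)\le 0$, and plugging this back into the cocycle decomposition proves the lemma. There is no real obstacle here: the only subtle point is confirming that the closed-up loop $w$ is a legitimate orbit of $\s$, which is exactly guaranteed by the distance hypothesis $d(\s^k(z),x)\le \frac14$ together with the two-coordinate dependence of $A$.
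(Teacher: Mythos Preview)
Your proof is correct and takes a genuinely different route from the paper. The paper invokes a calibrated subaction $V$ (shown earlier to depend on only one coordinate when $A$ depends on two) and writes $A=m_A+V\circ\s-V+g$ with $g\le 0$; since $z_0=z_k$ forces $V(z)=V(\s^k(z))$, the coboundary telescopes and $S_k(A-m_A)(z)=S_k(g)(z)\le 0$. You instead close up the word $z_0\ldots z_{k-1}$ into a periodic point $w$, check that the two-coordinate dependence of $A$ together with $z_0=z_k$ gives $S_k(A)(w)=S_k(A)(z)$, and then bound the periodic Birkhoff average by $m_A$. Your argument is more elementary and self-contained: it does not rely on the existence of a calibrated subaction (which in the paper comes from limits of Ruelle eigenfunctions), only on the definition of $m_A$ as a supremum over invariant measures. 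The paper's approach, by contrast, is shorter once the subaction machinery is in place and fits more naturally into the surrounding narrative, where the decomposition $A=m_A+V\circ\s-V+g$ is used repeatedly.
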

 \begin{proof}
We consider a calibrated subaction $V$. We have seen that it  depends only on  one coordinate. We remind that for every $\xi$,
\begin{equation}
\label{equ-pratiquecobord}
A(\xi)=m_{A}+V\circ \s(\xi)-V(\xi)+g(\xi)
\end{equation}
holds, where $g$ is a non-positive function (depending only on 2 coordinates).
Now we have
$$S_{k}(A-m_{A})(z)=S_{k}(g)(z)+V(\s^{k}(z))-V(z)=S_{k}(g)(z)\le 0.$$
\end{proof}
From Lemma \ref{lem-manepeierllocal} we claim that for every $x$ and $y$,
\begin{equation}\label{equa-manepeierllocal}
h(x,y)=S_{A}(x,y)=\max\left\{S_{n}(A-m_{A})(z)\, \s^{n}(z)=y\,\, d(z,x)\le\frac14\right\}.
\end{equation}

\begin{lemma}
\label{lem-contihprem}
Consider some fixed $y$ in $\Om$. The map $x\mapsto h(x,y)$ is continuous
\end{lemma}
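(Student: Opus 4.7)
The strategy is to read continuity---in fact local constancy---directly off the characterisation \eqref{equa-manepeierllocal}. First I would unpack the distance condition $d(z,x)\le\frac14$: since the distance $d$ only takes values of the form $2^{-n}$, having $d(z,x)\le\frac14$ is exactly the requirement $z_0=x_0$ and $z_1=x_1$. In particular this condition depends on $x$ only through the pair $(x_0,x_1)$ and is satisfied uniformly on the cylinder $[x_0 x_1]$.

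Next I would observe that in the locally constant setting the quantity being maximised in \eqref{equa-manepeierllocal}, namely $S_n(A-m_A)(z)$, depends on $z$ alone (via $z_0,z_1,\ldots,z_n$) and not on $x$, because $A$ depends only on two coordinates. Combined with the previous step, both the set of admissible $z$'s (those with $\sigma^n(z)=y$ and $z_0z_1=x_0x_1$) and the values attained on this set are completely determined by the pair $(x_0,x_1)$. Hence $h(x,y)$ depends on $x$ only through $x_0$ and $x_1$.

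From this I would conclude that $h(\cdot,y)$ is constant on the cylinder $[x_0 x_1]$. Since this cylinder is clopen and contains $x$, it is a neighbourhood on which the function is constant, so $x\mapsto h(x,y)$ is locally constant at every point, and therefore continuous on $\Omega$.

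There is no serious obstacle here: once the characterisation \eqref{equa-manepeierllocal} is available, continuity follows from a one-line observation on the dependence on $x$. The only subtlety worth flagging is that one should check the supremum in \eqref{equa-manepeierllocal} is a finite real number (not $-\infty$ or $+\infty$); this is already implicit in the preceding discussion, since on the full shift the set of admissible $z$'s is non-empty and, using $A=m_A+V\circ\sigma-V+g$ with $g\le 0$ and $V$ depending on one coordinate, one has $S_n(A-m_A)(z)=S_n(g)(z)+V(y)-V(x)\le V(y)-V(x)$, giving a uniform upper bound.
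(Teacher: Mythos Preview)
Your proposal is correct and follows essentially the same approach as the paper: both arguments exploit that the condition $d(z,x)\le\frac14$ depends on $x$ only through its first two digits, so the set of admissible $z$'s in \eqref{equa-manepeierllocal} (and hence the value of the maximum) is constant on each 2-cylinder. The paper phrases this via a sequence $x_n\to x$ with $x_n$ and $x$ agreeing on at least two digits, while you state the local constancy directly; the content is the same.
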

\begin{proof}
Consider a sequence $(x_{n})$ converging to $x$. Assume that all these $x_{n}$ and $x$ coincide for at least 2 digits.
Then,
$$d(z,x_{n})\le \frac14\iff d(z,x)\le\frac14.$$
For $y$ and $n$, consider any $z$ realizing the maximum into the definition of $S_{A}(x_{n},y)$. It also realizes the maximum for $S_{A}(x,y)$ and more generally for every $S_{A}(x_{k},y)$.
\end{proof}

Lemma \ref{lem-contihprem} shows that any irreducible components of the Aubry in the sense  of definition \ref{def-compo-aubry} is closed. It is also invariant. We thus have to show it is transitive.

Let us consider some components and pick two open sets $U$ and $V$ (for the components). We still assume that Equality \eqref{equ-pratiquecobord} holds.  Consider $x\in U\cap \CA$ and $y\in V\cap\CA$. By definition
$$h(x,y)+h(y,x)=0.$$
By Equality \eqref{equa-manepeierllocal}, $h(x,y)$ is realized by some $S_{n}(A-m_{A})(z)$ and $h(y,x)$ is realized by some $S_{m}(A-m_{A})(z')$. Moreover, we can also assume that $z$ belongs to $U$ and $z'$ belongs to $V$.

Indeed, if it is not the case, we can always follow preimages of $x$ in the component $\CA$ which are exactly on the set $g^{-1}(\{0\})$.

From the two pieces of orbits which are $z, \s(z),\ldots, \s^{n}(z)$ and $z', \s(z'),\ldots, \s^{m}(z')$ we can construct a periodic orbit. Denote by $\xi$ the point  of this periodic orbit in $U$.
Then, we claim that
$$h(y,\xi)=h(y,x)=S_{m}(A-m_{A})(z')$$
and,
$$h(\xi,y)=h(x,y)=S_{n}(A-m_{A})(z).$$

This shows that $\xi$ belongs to $\CA$ and to the same component than $y$. Therefore $\s^{-m}(U)\cap V\neq\emptyset$ and the component is transitive.

 \section{On the road to solve the subcohomological inequality}

 \subsection{Peierl's barrier and calibrated subactions}
 From properties of the Peierl's barrier we get the following result: a calibrated subaction is entirely determined by its values on the Aubry set.

 \begin{theorem}[see  \cite{GL1}{Th. 10}]
\label{theo-subcpeierl}
Any calibrated subaction $u$
satisfies for any $y$
\begin{equation} \label{equ-Peisubacdeter}
  u(y) = \sup_{\mathbf x \in \CA} [h(\mathbf x, y)+  u(\mathbf x) ],
\end{equation}
\end{theorem}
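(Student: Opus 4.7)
The plan is to prove the two inequalities in \eqref{equ-Peisubacdeter} separately, by backward iteration of the calibration identity \eqref{c}.

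For the direction $u(y) \geq \sup_{\mathbf x \in \CA}[h(\mathbf x, y) + u(\mathbf x)]$, I iterate the inequality $u(\sigma(w)) \geq A(w) + u(w) - m(A)$ along any finite orbit segment: for any $z$ with $\sigma^n(z) = y$, telescoping yields
$$u(y) \geq S_n(A - m(A))(z) + u(z).$$
Fix $\mathbf x \in \CA$, and choose a sequence of preimages $z$ of $y$ with $d(z, \mathbf x) \to 0$ and $n \to \infty$ realizing the $\limsup$ in Definition \ref{def-peierl}; continuity of $u$ then gives $u(y) \geq h(\mathbf x, y) + u(\mathbf x)$.

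For the reverse inequality I build a greedy backward trajectory from $y$. Because the maximum in \eqref{c} is attained, I can pick inductively $y_0 = y$ and, for each $n \geq 0$, a preimage $y_{n+1}$ of $y_n$ realizing $u(y_n) = A(y_{n+1}) + u(y_{n+1}) - m(A)$. Telescoping gives $u(y) = S_n(A - m(A))(y_n) + u(y_n)$ for every $n$. By compactness of $\Omega$, extract a subsequence $y_{n_k} \to \mathbf x^\star$. Continuity of $u$ forces $S_{n_k}(A - m(A))(y_{n_k}) \to u(y) - u(\mathbf x^\star)$, while the relations $\sigma^{n_k}(y_{n_k}) = y$, $y_{n_k} \to \mathbf x^\star$ and $n_k \to \infty$ together yield $h(\mathbf x^\star, y) \geq u(y) - u(\mathbf x^\star)$. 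Combined with the first inequality applied at $\mathbf x^\star$, this gives $u(y) = h(\mathbf x^\star, y) + u(\mathbf x^\star)$, hence equality in \eqref{equ-Peisubacdeter}, provided $\mathbf x^\star \in \CA$.

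The remaining step is to verify $\mathbf x^\star \in \CA$, namely $h(\mathbf x^\star, \mathbf x^\star) = 0$. The inequality $h(\mathbf x^\star, \mathbf x^\star) \leq 0$ is immediate from \eqref{equ-cobor2-mane}. For the reverse inequality, observe that the backward trajectory is self-consistent: $\sigma^{n_{k_2} - n_{k_1}}(y_{n_{k_2}}) = y_{n_{k_1}}$ for $k_1 < k_2$, and two applications of the telescoping identity give
$$S_{n_{k_2} - n_{k_1}}(A - m(A))(y_{n_{k_2}}) = u(y_{n_{k_1}}) - u(y_{n_{k_2}}) \longrightarrow 0.$$
Converting $y_{n_{k_2}}$ into a genuine preimage of $\mathbf x^\star$ by concatenating its first $n_{k_2} - n_{k_1}$ digits with $\mathbf x^\star$ produces a point $z$ with $\sigma^{n_{k_2} - n_{k_1}}(z) = \mathbf x^\star$ and $d(z, \mathbf x^\star)$ small. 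Lipschitz regularity of $A$, together with the fact that $y_{n_{k_1}}$ and $\mathbf x^\star$ already agree on many leading digits, collapses the Birkhoff-sum discrepancy between $z$ and $y_{n_{k_2}}$ into a geometric sum bounded by $C \cdot 2^{-M_1}$, where $M_1$ is the agreement length of $y_{n_{k_1}}$ with $\mathbf x^\star$ and tends to infinity as $k_1 \to \infty$. Letting $k_1, k_2 \to \infty$ yields $h(\mathbf x^\star, \mathbf x^\star) \geq 0$.

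The main technical hurdle is this last shadowing estimate. Without exploiting the precise alignment between the splice of $z$ with $\mathbf x^\star$ and the close agreement of $y_{n_{k_1}}$ with $\mathbf x^\star$, the Lipschitz bound on the Birkhoff-sum difference over a block of length $n_{k_2} - n_{k_1}$ would only be uniformly bounded. The decay arises precisely from the cancellation between the expansion factor $2^j$ of the shift at iterate $j$ and the factor $2^{-(n_{k_2} - n_{k_1} + M_1 - j)}$ coming from the shared tail, a geometric series collapsing to $O(2^{-M_1})$.
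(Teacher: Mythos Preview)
Your proof is correct and follows essentially the same strategy as the paper: build the calibrated backward orbit $(y_{n})$, extract a convergent subsequence to some $\mathbf x^{\star}$, show $\mathbf x^{\star}\in\CA$, and use the telescoped identity to get $u(y)=h(\mathbf x^{\star},y)+u(\mathbf x^{\star})$; the inequality $u(y)\ge h(\mathbf x,y)+u(\mathbf x)$ for all $\mathbf x$ comes from iterating the subaction inequality exactly as you do.

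The one substantive difference is in how you verify $\mathbf x^{\star}\in\CA$. You prove $h(\mathbf x^{\star},\mathbf x^{\star})\ge 0$ directly, which forces you to manufacture genuine preimages of $\mathbf x^{\star}$ by splicing and to control the Birkhoff-sum discrepancy with the geometric shadowing estimate you describe. The paper instead appeals to the original definition $\CA=\{x:\ S_{A}(x,x)=0\}$ via the Ma\~n\'e potential: from $S_{n_{k'}-n_{k}}(A-m(A))(y_{n_{k'}})=u(y_{n_{k}})-u(y_{n_{k'}})\to 0$ it concludes $S_{A}(\mathbf x^{\star},\mathbf x^{\star})\ge 0$. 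This route is nominally shorter because the Ma\~n\'e potential takes a $\sup$ over all $n$ rather than a $\limsup_{n\to\infty}$, so no particular sequence of lengths need be tracked. That said, the paper is silent about the fact that $y_{n_{k'}}$ is a preimage of $y_{n_{k}}$ rather than of $\mathbf x^{\star}$, and making that step honest requires exactly the splicing-plus-Lipschitz bound you spelled out. So your version is not really longer in content, only in what it makes explicit; the paper's shortcut hides the same estimate.
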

 \begin{proof}
We show that any calibrated subaction $u$ is entirely determined by its values on the Aubry set $\CA$.

Let us thus consider some calibrated subaction $u$. Let $y$ be in $\Om$. Let $y_{-1}$ be any preimage of $y$ such that
$$u(y)=A(y_{-1})-m(A)+u(y_{-1}).$$
More generally we consider a sequence $y_{-n}$ such that $\s(y_{-n})=y_{-n+1}$ and
$$u(y_{-n+1})=A(y_{-n})-m(A)+u(y_{-n}).$$

We claim that any accumulation point for $(y_{-n})$ belongs to $\CA$. Indeed, let us consider some converging sequence $y_{-n_{k}}$, converging to $x$.

For $k'>k$ we get $\s^{n_{k'}-n_{k}}(y_{-n_{k'}})=y_{-n_{k}}$. Therefore, we have
$$S_{n_{k'}-n_{k}}(A-m(A))(y_{-n_{k'}})=u(y_{-n_{k}})-u(y_{-n_{k'}}),$$
and, $y_{n_{k}}\to x$ yields
$$S_{A}(x,x)\ge 0. $$

As $S_{A}(x,x)\le 0$ always holds (see Inequality \eqref{equ-cobor2-mane}) we get  that the limit point $x$ belongs to $\CA$.

Now we have that $\disp S_{n_{k}}(A-m(A))(y_{n_{k}})=u(y)-u(y_{n_{k}})$, which yields
$$h(x,y)\ge u(y)-u(x).$$
In particular $u(y)\le \sup_{x'\in \CA}\{h(x',y)+u(x')\}$ holds.

Actually, the reasoning we have just done allows to get a stronger result. Consider $z$ in $\Om$, then we can write
$$A(z)=m(A)+u\circ \s(z)-u(z)+g(z),$$
where $g$ is a non-positive Lipschitz function.
Now, consider $z_{n}$ such that $\s^{n}(z)=y$. Therefore,
$$S_{n}(A-m(A))(z_{n})=u(y)-u(z)+S_{n}(g)(z)\le u(y)-u(z).$$
This shows that
$$h(x',y)\le u(y)-u(x')$$
always holds (consider any $x'$ and take a subsequence of $z_{n}$ converging to it).
Then, Equality \eqref{equ-Peisubacdeter} holds.
\end{proof}

Moreover, the irreducible component get a special importance :

\begin{theorem}
\label{theo-subac-peierlirreduc}
If $x$ and $z$ are in the same irreducible component of $\CA$, then for any $y$,
$$h(x,y)+u(x)=h(z,y)+u(z).$$
\end{theorem}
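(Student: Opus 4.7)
The plan is to exploit two facts established in the proof of Theorem~\ref{theo-subcpeierl}: for any calibrated subaction $u$ and \emph{any} points $a,b\in\Omega$ one has the one-sided bound
$$h(a,b)\le u(b)-u(a),$$
together with the triangle-type inequality from Lemma~\ref{lem-inetrianpeierl}, $h(a,b)\ge h(a,c)+h(c,b)$ for all $a,b,c$. The defining property of an irreducible component of $\CA$ is $h(x,z)+h(z,x)=0$, and I expect both conclusions to fall out by ``squeezing'' these inequalities against that equality.

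First I would pin down the relation between $u(x)$ and $u(z)$. The one-sided bound applied to $(x,z)$ and $(z,x)$ gives
$$u(z)-u(x)\ge h(x,z)\quad\text{and}\quad u(x)-u(z)\ge h(z,x).$$
Adding these yields $0\ge h(x,z)+h(z,x)$, but by hypothesis equality holds, so both inequalities above must actually be equalities. Hence
$$u(z)-u(x)=h(x,z)=-h(z,x).$$

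Next I would show the parallel equality $h(x,y)-h(z,y)=h(x,z)$ for every $y$. The triangle inequality applied first with the middle point $z$ and then with the middle point $x$ gives
$$h(x,y)\ge h(x,z)+h(z,y)\quad\text{and}\quad h(z,y)\ge h(z,x)+h(x,y),$$
i.e.
$$h(x,y)-h(z,y)\ge h(x,z)\quad\text{and}\quad h(x,y)-h(z,y)\le -h(z,x)=h(x,z),$$
so equality holds. Combining the two equalities obtained,
$$h(x,y)+u(x)=\bigl(h(z,y)+h(x,z)\bigr)+u(x)=h(z,y)+\bigl(u(x)+h(x,z)\bigr)=h(z,y)+u(z),$$
which is the desired conclusion.

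The argument is essentially a two-step sandwich, so there is no serious obstacle beyond being careful with signs; the only conceptual point is that the irreducibility hypothesis $h(x,z)+h(z,x)=0$ is exactly what is needed to upgrade the two one-sided bounds (one coming from Theorem~\ref{theo-subcpeierl}, the other from Lemma~\ref{lem-inetrianpeierl}) into equalities simultaneously.
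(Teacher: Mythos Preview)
Your proof is correct and uses exactly the same ingredients as the paper: the triangle inequality $h(a,b)\ge h(a,c)+h(c,b)$, the bound $h(a,b)\le u(b)-u(a)$ coming from the proof of Theorem~\ref{theo-subcpeierl}, and the defining relation $h(x,z)+h(z,x)=0$. The only difference is organizational: the paper chains the inequalities directly to obtain $u(x)+h(x,y)\ge u(z)+h(z,y)$ and then invokes the symmetry $x\leftrightarrow z$, whereas you first isolate the two separate equalities $u(z)-u(x)=h(x,z)$ and $h(x,y)-h(z,y)=h(x,z)$ and then combine them. The substance is identical.
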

 \begin{proof}
We remind two inequalities  and an important equality:
\begin{eqnarray*}
h(x,y)&\ge& h(x,z)+h(z,y),\\
u(x)&\ge& h(z,x)+u(z),\\
0&=&h(x,z)+h(z,x).
\end{eqnarray*}
Then,
\begin{eqnarray*}
u(x)+h(x,y)&\ge & u(x)+h(x,z)+h(z,y)\\
&\ge & u(x)-h(z,x)+h(z,y)+u(z)-u(z)\\
&\ge & u(x)-u(z)-h(z,x)+h(z,y)+u(z)\\
&\ge& h(z,y)+u(z).
\end{eqnarray*}
Exchanging the roles of $x$ and $z$ we get that the reverse inequality also holds.
\end{proof}

 \subsection{Selection of calibrated subactions}
 We remind that for a given $\be>0$, the equilibrium state $\mu_{\be}$ is also a Gibbs measure obtained by the product of the {\em eigenfunction} $H_{\be}$ and the {\em eigenprobability}\footnote{which is also the conformal measure.} $\nu_{\be}$.
 We also remind that any accumulation point for the family $\disp\frac1\be\log H_{\be}$ is a calibrated subaction.

Uniqueness of the maximizing measure gives a partial answer to these questions:

\begin{theorem}
\label{theo-uniquecalibra}
Assume that there is a unique $A$-maximizing measure, then all the calibrated subactions are equal up to an additive constant.
\end{theorem}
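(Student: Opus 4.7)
The plan is to show that if $u_1, u_2$ are two calibrated subactions, then $v := u_1 - u_2$ is constant, proceeding in three steps: first, show $v$ is $\s$-invariant on the Aubry set $\CA$; second, use the uniqueness of the maximizing measure to conclude $v$ is actually constant on $\CA$; third, extend constancy to all of $\Om$ via the representation formula from Theorem \ref{theo-subcpeierl}.

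For the first step, I observe that for any calibrated subaction $u$, writing $A = m(A) + u\circ\s - u + g_u$ with $g_u \le 0$, the same reasoning that led to Inequality \eqref{equ-cobor2-mane} yields $S_A(x,x) \le g_u(x)$ for every $x$; since $S_A(x,x) = 0$ on $\CA$ and $g_u \le 0$, this forces $g_u \equiv 0$ on $\CA$. Applying this identity to both $u_1$ and $u_2$ gives $u_1\circ\s - u_1 = A - m(A) = u_2\circ\s - u_2$ on $\CA$, so $v$ is $\s$-invariant on $\CA$.

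For the second step, let $\mu$ be the unique $A$-maximizing measure, necessarily ergodic as an extreme point of the compact convex face $\CM_{max}$. On $\supp\mu \subset \CA$, the continuous $\s$-invariant function $v$ is $\mu$-a.e.\ constant by ergodicity, and hence constant on $\supp\mu$ by continuity; call this constant $c$. Given any $p \in \CA$, its $\om$-limit set $\om(p)$ is a nonempty compact $\s$-invariant subset of $\CA$ (using Proposition \ref{prop-aubry-compact} and the forward invariance proved in Theorem \ref{theo-aubryset}), so by Krylov-Bogolyubov it supports an invariant probability; because $A - m(A)$ is a coboundary on $\CA$, that probability is $A$-maximizing and uniqueness forces it to coincide with $\mu$, whence $\supp\mu \subset \om(p)$. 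Pick $q \in \supp\mu$ and $n_k \to \infty$ with $\s^{n_k}(p) \to q$: invariance of $v$ on $\CA$ gives $v(p) = v(\s^{n_k}(p))$, and continuity of $v$ yields $v(p) = v(q) = c$. Hence $v \equiv c$ on $\CA$.

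For the third step, Theorem \ref{theo-subcpeierl} provides the representation $u_i(y) = \sup_{x\in\CA}[h(x,y) + u_i(x)]$ for $i = 1, 2$. Substituting $u_1(x) = u_2(x) + c$ on $\CA$ immediately yields $u_1(y) = c + u_2(y)$ for every $y \in \Om$, completing the proof. The main obstacle is the second step, namely propagating the constant $c$ from $\supp\mu$ to the entire Aubry set. This is precisely where the uniqueness of the maximizing measure is indispensable: without it, distinct irreducible components of $\CA$ typically carry distinct constants (as one sees already in the locally constant case treated in Subsection \ref{subsec-aubrylocconst}), and the $\om$-limit argument used to bridge them would collapse.
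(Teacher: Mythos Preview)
Your proof is correct, but it follows a somewhat different route from the paper's. The paper argues that unique maximizing measure forces $\CA$ to be uniquely ergodic, hence to consist of a single irreducible component in the sense of Definition~\ref{def-compo-aubry}; then Theorem~\ref{theo-subac-peierlirreduc} says $x\mapsto h(x,y)+u(x)$ is constant on that component, so by Theorem~\ref{theo-subcpeierl} every calibrated subaction satisfies $u(y)=h(x_0,y)+u(x_0)$ for a fixed $x_0\in\CA$, and uniqueness up to a constant follows.

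Your approach bypasses the Peierl's-barrier equivalence relation and Theorem~\ref{theo-subac-peierlirreduc} entirely. Instead you work directly with the difference $v=u_1-u_2$: showing $g_{u_i}\equiv0$ on $\CA$ (for \emph{any} calibrated subaction, not just the limit of $\frac1\be\log H_\be$) gives $\s$-invariance of $v$ on $\CA$, and then your $\om$-limit argument combined with Krylov--Bogolyubov replaces the structural statement ``$\CA$ has one irreducible component'' by the dynamical fact that every orbit in $\CA$ accumulates on $\supp\mu$. This is more elementary and more self-contained, since the paper's claim that unique ergodicity implies a single irreducible component is stated without proof (and indeed the paper itself notes after Definition~\ref{def-compo-aubry} that closedness of irreducible components is not clear in general). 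The paper's route, on the other hand, is more structural and makes transparent exactly which piece of the Peierl's machinery drives the result.
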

 \begin{proof}
In that case $\CA$ is uniquely ergodic and has thus a single irreducible component. If $x_{0}$ is any point of $\CA$, Theorems \ref{theo-subac-peierlirreduc} and \ref{theo-subcpeierl} show that any calibrated subaction is entirely determined by its value on $x_{0}$.
\end{proof}

 We point out that even in that simple case, the convergence for $\disp\frac1\be\log H_{\be}$ is not clear.

 In that direction we mention one of the results in \cite{Lep3}. For simplicity we state it using the setting of \cite{BLL}.

\begin{theorem}[see \cite{Lep3}]
\label{theo-selecsubacreno}
Assume that $\Om=\{1,2,3\}^{\N}$ and $A$ satisfies
 $$A(x):=
\begin{cases}
-d(x,1^{\8})\text{ if }x=1\ldots,\\
-3d(x,2^{\8})\text{ if }x=2\ldots,\\
-\al<0 \text{ if }x=3\ldots.\\
\end{cases}
$$
Then, $\nu_{\be}\to\delta_{1^{\8}}$, as $\be\to+\8$, and, moreover, $\disp\frac1\be\log H_{\be}$ converges.
\end{theorem}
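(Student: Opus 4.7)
The hypotheses give $A\le 0$ with equality exactly at the fixed points $1^\infty$ and $2^\infty$, so $m(A)=0$, $\CA=\{1^\infty,2^\infty\}$ ($3^\infty$ is excluded because $A\equiv-\alpha$ on $[3]$ forces $S_A(3^\infty,3^\infty)=-\infty$), $h_{\max}=0$, and $\lambda_\beta\to 1^+$. My plan is to derive a closed system of fixed-point equations for the cylinder masses of $\nu_\beta$ from the dual eigenequation, solve it asymptotically, and then infer the convergence of $\tfrac{1}{\beta}\log H_\beta$. The iterated form $\lambda_\beta^{\,n}\nu_\beta([a_0\cdots a_{n-1}])=\int_\Omega e^{\beta S_n(A)(a_0\cdots a_{n-1}y)}d\nu_\beta(y)$, applied to the escape cylinders $[i^k j]$ with $j\neq i$, together with the identity $\lambda_\beta\nu_\beta([j])=\int e^{\beta A(jy)}d\nu_\beta(y)$ used to absorb the tail, yields the explicit recursions
$$\nu_\beta([1^k j])=\lambda_\beta^{-k}e^{-\beta(1-2^{-k})}\nu_\beta([j]),\quad \nu_\beta([2^k j])=\lambda_\beta^{-k}e^{-3\beta(1-2^{-k})}\nu_\beta([j]),$$
and $\nu_\beta([3^k j])=\lambda_\beta^{-k}e^{-k\beta\alpha}\nu_\beta([j])$, where I used the direct computations $S_k(A)(1^k jy)=-(1-2^{-k})$, $S_k(A)(2^k jy)=-3(1-2^{-k})$, $S_k(A)(3^k jy)=-k\alpha$ for $j\neq i$.

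Since $\lambda_\beta>1$ strictly, $\nu_\beta$ is non-atomic ($\nu_\beta(\{i^\infty\})\le\nu_\beta([i^n])\le\lambda_\beta^{-n}\to 0$), and partitioning $[i]$ by the position of the first digit $\neq i$ gives $\nu_\beta([i])=T_i(\beta)(1-\nu_\beta([i]))$, hence $\nu_\beta([i])=T_i/(1+T_i)$, where $T_1,T_2,T_3$ are the infinite sums read off from the three recursions. Writing $\lambda_\beta-1\sim e^{-c\beta}$, the geometric tail of each $T_i$ has effective length $\sim 1/(\lambda_\beta-1)\sim e^{c\beta}$, giving the estimates $T_1\sim e^{(c-1)\beta}$, $T_2\sim e^{(c-3)\beta}$, $T_3\sim e^{-\alpha\beta}$. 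The normalization $\sum_i \nu_\beta([i])=1$, which reduces asymptotically to $1/T_1\approx T_2+T_3$, then forces $c-1=\min(3-c,\alpha)$, \ie $c=1+\min(1,\alpha)>1$, whence $T_1\to\infty$ while $T_2,T_3\to 0$. Thus $\nu_\beta([1])\to 1$. Applying the same recursion to the longer cylinders, $\nu_\beta([1^n])=T_1^{(n)}/(1+T_1)$ with $T_1^{(n)}=\sum_{k\ge n}\lambda_\beta^{-k}e^{-\beta(1-2^{-k})}\sim T_1$ (since $T_1-T_1^{(n)}$ is a vanishing finite head), gives $\nu_\beta([1^n])\to 1$ for every fixed $n$. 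Because the $[1^n]$ are clopen and shrink to $\{1^\infty\}$, this proves $\nu_\beta\to\delta_{1^\infty}$ in the weak* topology.

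For the convergence of $\tfrac{1}{\beta}\log H_\beta$, Proposition \ref{prop-regulholde} makes this family equicontinuous, hence relatively compact in $\CC^{0}(\Omega)$, and any accumulation point $V$ is a Lipschitz calibrated subaction (via the argument around \eqref{equ1-subcohomo}). By Theorem \ref{theo-subcpeierl}, $V$ is determined modulo an additive constant by the pair $(V(1^\infty),V(2^\infty))$. Normalizing so that $\int H_\beta d\nu_\beta=1$, the weak* selection $\nu_\beta\to\delta_{1^\infty}$ just proved forces $H_\beta(1^\infty)\to 1$, \ie $V(1^\infty)=0$. Evaluating the eigenvalue equation $\CL(H_\beta)=\lambda_\beta H_\beta$ at $2^\infty$, namely $(\lambda_\beta-1)H_\beta(2^\infty)=e^{-\beta/2}H_\beta(12^\infty)+e^{-\alpha\beta}H_\beta(32^\infty)$, and substituting $H_\beta\approx e^{\beta V}$ together with $\lambda_\beta-1\sim e^{-c\beta}$ from the previous paragraph, yields one transcendental equation that pins down $V(2^\infty)$. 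Hence $\tfrac{1}{\beta}\log H_\beta$ admits a unique accumulation point and therefore converges.

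\textbf{Main obstacle.} The technically decisive step is the asymptotic analysis of the coupled system in the second paragraph, specifically the proof that $T_1\to\infty$ rather than $T_1\to 0$. The mechanism is transparent---the slower decay of $A$ near $1^\infty$ (coefficient $1$) versus $2^\infty$ (coefficient $3$) lets the $\sim e^{c\beta}$ nearly-costless excursions near $1^\infty$ accumulate to a diverging total, while the same number of excursions near $2^\infty$ each cost $e^{-3\beta}$ and vanish---but bootstrapping the normalization equation simultaneously in the three unknowns $\nu_\beta([i])$ and in $\lambda_\beta$ to extract the correct exponent $c>1$ rigorously is the content of the renormalization argument of \cite{Lep3}.
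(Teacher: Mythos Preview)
The paper does not prove this theorem; it is stated with a citation to \cite{Lep3} and followed only by an informal paragraph explaining the flatness-selection heuristic. There is therefore no proof in the text to compare your attempt against.

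Your conformal-measure recursions are correct, your identification of the exponent $c=1+\min(1,\alpha)$ is right, and the overall strategy for $\nu_\beta\to\delta_{1^\infty}$ is the natural one. You are also honest that the coupled bootstrap in the normalization equation is where the work lies.

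The second half, however, has a real gap. Your step ``$\int H_\beta\,d\nu_\beta=1$ and $\nu_\beta\to\delta_{1^\infty}$ force $H_\beta(1^\infty)\to 1$'' is not justified: weak* convergence of $\nu_\beta$ gives no pointwise control on the integrand $H_\beta$, which varies at exponential scale in $\beta$. More importantly, the sentence ``one transcendental equation pins down $V(2^\infty)$'' hides the actual difficulty. Here $h(1^\infty,2^\infty)=-1$ and $h(2^\infty,1^\infty)=-3$, so by Theorem~\ref{theo-subcpeierl} the calibrated subactions modulo constants form a genuine one-parameter family indexed by $V(2^\infty)-V(1^\infty)\in[-1,3]$; knowing only that every accumulation point of $\tfrac1\beta\log H_\beta$ is calibrated does not single one out. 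The equation you write at $2^\infty$ involves $V(12^\infty)$ and $V(32^\infty)$, which by Theorem~\ref{theo-subcpeierl} depend on \emph{both} $V(1^\infty)$ and $V(2^\infty)$, and it must be coupled with the analogous equation at $1^\infty$; proving this system has a unique solution consistent with the value of $c$ you obtained is exactly the selection argument carried out in \cite{Lep3}, not a triviality. As written, your paragraph asserts the conclusion rather than proving it.
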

 This theorem shows that flatness is a criterion for selection: the Aubry set in that case is reduced to $\{1^{\8}\}\cup \{2^{\8}\}$ and the two unique ergodic maximizing measures are the Dirac measures $\delta_{1^{\8}}$ and $\delta_{2^{\8}}$. The potential is ``more flat'' in $1^{\8}$ than in $2^{\8}$. Therefore, this Theorem says that the locus where the potential is flatter gets  all the mass in the limit of the eigenmeasure, when $\beta \to \infty.$ In that case this is sufficient to determine all the calibrated subactions.

More generally if the Aubry set $\CA$ is not a subshift of finite type, the problem concerning selection is that
\begin{enumerate}
\item there is no satisfactory  theory for the analysis of the measure of maximal entropy for general subshifts.
\item we do not know about the existence or uniqueness of conformal measures (one of the key points in Theorem \ref{theo-selecsubacreno} in the procedure of selection of calibrated subactions).
\end{enumerate}
We shall also see that the problem of selection of subaction is related to the multiplicity of an eigenvector in the Max-Plus formalism.

\newpage

\chapter{The Involution Kernel \label{inv}}

\section{Introduction and main definitions}

In this section we will present the involution kernel which is a concept that is sometimes useful for understanding problems
of different areas: large deviations (\cite{BLT}), issues of differentiability of the main eigenfunction and piecewise differentiability of the subaction \cite{LOS} \cite{LMMS}, optimal transport (see \cite{CLO} \cite{LO},\cite{LM2},\cite{LOT}), etc... It is also related to the Gromov distance  on hyperbolic spaces (see \cite{LT1}).

The main issue here is that is sometimes helpful for understanding a problem to look for the dual problem.

We recall that the   Bernoulli  space is the set
$\{1,2,...,d\}^\mathbb{N}=\Omega$. A general element $x$ in the   Bernoulli  space
$\Omega$ will be denoted in this section by $ x=(x_0,x_1,..,x_n,..)$. The function $\sigma$ denotes the shift acting on $\Omega$.

We will consider another copy of $\Omega$ which will be denoted by $\tilde{\Omega}$. Points in this set are denoted by $w= (w_0,w_1,w_2,..,w_n,..).$

We denote
$\{1,2,...,d\}^\mathbb{Z}=\hat{\Omega}=\tilde{\Omega}\times \Omega   = \{1,2,...,d\}^\mathbb{N}\times  \{1,2,...,d\}^\mathbb{N}$. Points in this set
are denoted by
$$(w\,|\, x)=(w,x)= (...w_n...w_3,w_2,w_1,w_0\,|\,x_0,x_1,..,x_n,..).$$

For a fixed $i\in\{1,2,..,d\}$, the function $\psi_i:\Omega \to \Omega$
indicates the $i$-th inverse branch of $\sigma$, $i\in \{1,2,..,d\}$. This means $\psi_{i} (x)= (i, x_0,x_1,..,x_n,..)$. We can also use the notation
$$\psi_{w} (x)= (w_0, x_0,x_1,..,x_n,..),$$
$w\in \Omega$,  in the case, $ w=(w_0,w_1,..,w_n,..)$. In other words $\psi_{w} =\psi_{w_0}.$
\bigskip

We also denote by
$\tilde{\sigma}$ the shift on $\tilde{\Omega}$. Finally,  $\mathbb{ T}^{-1}$ is the backward
shift on $\hat{\Omega}$ given by
$$\mathbb{ T}^{-1}(w,x) = ( \tilde{\sigma} (w),
\psi_{w_0} (x)).$$

\bigskip
We will present some general results for a H\"older function $A$ which does not necessarily depends on two coordinates. Later, in this section, we will assume that $A$ depends on two coordinates.

\bigskip

It is known \cite{BLT} \cite{LMMS} that given $A:\Omega \to \mathbb{R}$ H\"older, in the variable $A(x)$, there exists
a dual function $A^* :\tilde{\Omega} \to \mathbb{R}$, in the variable $A^*(w)$, and $W:\tilde{\Omega}\times  \Omega\to \mathbb{R}$, such that

$$A^*(w)= A\circ \mathbb{ T}^{-1}(w,x)+ W \circ \mathbb{ T}^{-1}(w,x) -
W(w,x).$$

The functions $A^* :\tilde{\Omega} \to \mathbb{R} $ and $W:\tilde{\Omega}\times  \Omega\to \mathbb{R}$ are both H\"older. We say that $W$ is {\bf the involution kernel} and $A^*$ is  {\bf the dual potential} for $A$.

The $A^*$and $W$ are not uniquely defined.

The expression for $A^* $ can be also written as

$$A^*(w)= A (w_0,x_0,x_1,..)+$$
$$ W (...w_2,w_1\,|\,w_0,x_0,x_1,..) -
W(..,w_1,w_0\,|\,x_0,x_1,..).$$

We say that $A$ is symmetric if $A=A^*$.

Suppose $A$ is fixed, and $W$ and $A^*$ are also fixed.
For a given real parameter $\beta$ we have

$$\beta\, A^*(w)= \beta A\circ \mathbb{ T}^{-1}(w,x)+ \beta W \circ \mathbb{ T}^{-1}(w,x) - \beta
W(w,x).$$

It follows that for any real $\beta$ we have that $\beta W$ is the involution kernel and $\beta A^*$ is the dual potential for $\beta A$

\bigskip

Given $A$, we denote
$$\Delta(x,x',w)=\sum_{n\geq1}A\circ\tau_{w,n}(x)
-A\circ\tau_{w,n}(x').$$

The involution kernel $W$ can be explicitly computed in the following way: for any $(w,x)$ we define $W$ by
$W(w,x)=\Delta_A(x,x',w)$,  where we choose a point $x'$ for good \cite{BLT}.

It is known the following relation: for any $x,x',w\in \Omega$, we
have that $W(w,x)-W(w,x')=\Delta(x,x',w)$.

We denote $\phi_{\beta A}=\phi_\beta: \Omega \to \mathbb{R}$ the main eigenfunction of the Ruelle operator for $\beta A$ and $\lambda(\beta A)=\lambda(\beta )$ the main eigenvalue.
In the same way we  denote  $\phi_{\beta A^*}= \phi_{\beta}^*$, where, $\phi_{\beta A^*}: \tilde{\Omega} \to \mathbb{R}$, the main eigenfunction of the Ruelle operator for $\beta A^*$ and $\lambda(\beta A)^*$, which is the  corresponding main eigenvalue.

One can show (see \cite{LMMS}) that $\lambda(\beta A)= \lambda(\beta A)^*$.

$\nu_{\beta }=\nu_{\beta A}$ denotes the normalized  eigenprobability for the Ruelle operator for  $\beta A$, and,
$\nu_{\beta}^*=\nu_{\beta A^*}$ denotes the normalized  eigenprobability for the Ruelle operator for $ \beta A^*$.

Finally, the probabilities $\mu_{\beta}=\mu_{\beta A}=\phi_{\beta A}\nu_{\beta A}$ and  $\mu_{\beta}^*=\mu_{\beta A^*}=\phi_A^* \nu_A^*$, are, respectively, the equilibrium states for $\beta A$ and $\beta A^*$.

\bigskip

In the case $A$ is symmetric we have $\nu_{\beta}=\nu^*_{\beta}$, $\mu_{\beta}=\mu^*_{\beta}$, etc...
\bigskip

We denote the $\beta$ normalizing constant by
$$c(\beta) =  \log \int \int e^{\beta\, W(w,x)} \, d \nu_{\beta A^*}(w) \, d \nu_{\beta A} (x)  ,$$
and
$$ \gamma = \lim_{\beta \to \infty} \frac{1}{\beta} c(\beta).$$

Therefore,
$$ \gamma= \lim_{\beta \to \infty}\, \frac{1}{\beta} \,c(\beta) = sup_{w,x}\,\{ W(w,x)-V(x) -V^*(w) -I(x)- I^* (w)\}.$$

The probability $e^{\beta\, W(w,x)} \, d \nu_{\beta A^*}(w) \, d \nu_{\beta A} (x)$ (after normalization) is invariant for the shift
$\hat{\sigma}$ acting on $\{1,2,..,d\}^\mathbb{Z}.$

It is known (see \cite{BLT}) that
\begin{equation}\label{back1}
 \phi_{\beta A}(x) = \int e^{\beta W(w,x)-c(\beta)} \,d
\nu_{\beta A^*}(w),
\end{equation}
is the normalized eigenfunction for the Ruelle operator of $\beta A$.

Moreover,
\begin{equation}\label{back2}
 \phi_{\beta A^*}(w) = \int e^{\beta W(w,x)-c(\beta)} \,d
\nu_{\beta A}(x),
\end{equation}

Note from above that we also get
\begin{equation}\label{back3}
 \phi_{\beta A}(x) = \int e^{\beta W(w,x)-c(\beta)}\,  (\phi_{\beta A^*}(w))^{-1} \,d
\mu_{\beta A^*}(w),
\end{equation}

and this is a relation between   $\phi_{\beta A^*}$ and  $\phi_{\beta A}.$

The main point above is that, via an integral Kernel $e^{W}$, one can get information of the eigenfunction via the eigenmeasure of the dual problem.

We denote respectively by $V$ and $V^*$ any calibrated subaction for $A$ and $A^*$.

If the maximizing probability for $A$ is unique, the maximizing probability for $A^*$ is also unique (see \cite{LMMS}). We can also get a large deviation principle for $\mu_{\beta A^*}.$

We denote by $I^*$ the deviation function for $A^*$ (see \cite{BLT}). We presented on the previous section \ref{ldev} some basic properties of the deviation function $I$ for $A$. The only difference now is that we consider the same for $A^*$.

\bigskip

Suppose $V$ is the limit of  a subsequence $\frac{1}{\beta_n}\, \log \phi_{\beta_n}$, where $\phi_{\beta_n}$ is an eigenfunction
of the Ruelle operator for $\beta_n A$. Suppose $V^*$ is obtained in an analogous way for $A^*$ (using a common subsequence).  Then, there exists $\gamma$ such that
\begin{equation}\label{exp}
\gamma+ V(x ) =
\sup_{ w \in \Omega}\, [\,W(w, x) - V^* (w)- I^*(w)\,].
\end{equation}

This follows from Varadhan's Integral Lemma (see \cite{DZ}) and the fact that $I$ is the Large deviation function for the family $\mu_{\beta\, A}$ (see \cite{LOT} \cite{LOS} \cite{CLT}).

Then, $V$ is the $W(w, x)  - I^*(w)- \gamma$ transform of $V^*$ (see \cite{Vil1} for definitons)

There is a dual expression
\begin{equation}\label{exp}
\gamma + V^* (w ) =
\sup_{ x \in \Omega}\, [\,W(w, x) - V (x)- I(x)\,].
\end{equation}

\medskip

Note that the above equation is not for a general pair of calibrated subactions $V$ and $V^*$, it is an equation for selected subactions.
\medskip

We point out that when $A=A^*$ any calibrated subaction $V$ obtained via the limit of eigenfunction of the Ruelle operator when $\beta$ to $\infty$ will satisfy the equation

\begin{equation}\label{exp}
\gamma + V (w ) =
\sup_{ x \in \Omega}\, [\,W(w, x) - V (x)- I(x)\,]
\end{equation}
for some constant $\gamma$.

\bigskip

In the general case ($A\neq A^*$)  we have that the limit calibrated subaction $V$ satisfies the equation

$$
  V(x) = \max_{y \in \Omega}{ \left\{ W(y, x) - \max_{z \in \tilde{\Omega} }{ \{ W(y, z) - V(z)  \} }     \right\} }.
$$

Note that there is no $\gamma$ in the above equation.
\medskip

\medskip

Under the hypothesis of twist condition for $A$ (to be defined bellow), and in the case the maximizing probability has support in a periodic orbit, the above equation can help to
obtain explicit expression for subactions (see \cite{LOS} \cite{CLO}).

We consider now on $\Omega=\{1, ...,d\}^\mathbb{N}$ the lexicographic order. This order is obtained from the order $1\leq 2\leq 3\leq ...\leq d$.

Following \cite{LOT} we define:

\begin{definition} \label{rom} We say a  continuous  $G: \hat{\Omega}=\tilde{\Omega}\times \Omega
 \to \mathbb{R}$ satisfies the  {\bf twist} condition on
$ \hat{\Omega}$, if  for any $(a,b)\in
\hat{\Omega}=\tilde{\Omega}\times \Omega $ and $(a',b')\in\tilde{\Omega}\times \Omega
$, with $a'> a$, $b'>b$, we have
\begin{equation} \label{ab}
G(a,b) + G(a',b')  <  G(a,b') + G(a',b).
\end{equation}
\end{definition}

\begin{definition} We say a continuous $A: \Omega \to \mathbb{R}$   satisfies the twist condition, if some  of its involution kernels $W$
satisfies the twist  condition.

\end{definition}

\bigskip

The twist condition plays in Ergodic Optimization, in some sense,  the same role as the "convexity hypothesis of the Lagrangian" which is of fundamental importance in Aubry-Mather theory (see \cite{Mat}, \cite{CI}, \cite{Fathi} or \cite{GT1}).
\bigskip

An alternative definition for the twist condition would be to claim that for any $a'> a$, $b'>b$ we have
\begin{equation} \label{ba}
G(a,b) + G(a',b')  >  G(a,b') + G(a',b).
\end{equation}

This will make no difference in the nature of the results we will get.
\bigskip

Several examples of potentials satisfying the twist condition appear in \cite{LOT}.

Linear combination with positive coefficients of  potentials $A$  which  satisfy the  twist condition also satisfies the  twist condition.

Given $x$, we denote $w_x\in  \tilde{\Omega}$ any point such that

$$
\gamma+ V(x ) =
 \, [\,W(w_x, x) - V^* (w_x)- I^*(w_x)\,].
$$

The proof of the above appears in \cite{LOS}.

\vskip 0.5cm
\begin{proposition} If $A$ is twist, then
$x \to w_x$  is
monotonous non-increasing, where  $w_x$ was chosen to be optimal.
\end{proposition}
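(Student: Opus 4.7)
The plan is to argue by contradiction using the twist inequality directly on the variational characterization of $w_x$. Fix $x<x'$ in $\Omega$ and suppose, for contradiction, that one can choose optimizers with $w_x<w_{x'}$ in $\tilde\Omega$ (in the lexicographic order). By the definition of $w_x$ and $w_{x'}$ as maximizers of the functional $w\mapsto W(w,\cdot)-V^*(w)-I^*(w)$ at the respective points $x,x'$, I will write down two equalities and two inequalities:
\begin{align*}
\gamma+V(x)&=W(w_x,x)-V^*(w_x)-I^*(w_x),\\
\gamma+V(x')&=W(w_{x'},x')-V^*(w_{x'})-I^*(w_{x'}),\\
\gamma+V(x)&\ge W(w_{x'},x)-V^*(w_{x'})-I^*(w_{x'}),\\
\gamma+V(x')&\ge W(w_x,x')-V^*(w_x)-I^*(w_x).
\end{align*}

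The next step is to add the two equalities and, separately, the two inequalities; the terms $V^*(w_x)+V^*(w_{x'})+I^*(w_x)+I^*(w_{x'})$ on the right cancel identically when one subtracts, so the net content is
$$W(w_x,x)+W(w_{x'},x')\;\ge\;W(w_{x'},x)+W(w_x,x').$$
This is exactly the reverse of the strict twist inequality \eqref{ab} applied to the pair $(w_x,x)<(w_{x'},x')$, which forces
$$W(w_x,x)+W(w_{x'},x')\;<\;W(w_x,x')+W(w_{x'},x).$$
The two displays are incompatible, yielding the desired contradiction; hence $x<x'$ must imply $w_x\ge w_{x'}$, which is the claimed monotonicity.

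The only subtle point I anticipate is bookkeeping of the order convention: the twist condition \eqref{ab} is stated for $a'>a$ in $\tilde\Omega$ and $b'>b$ in $\Omega$, and one needs to verify that the four points $(w_x,x),(w_{x'},x'),(w_{x'},x),(w_x,x')$ are in the right configuration so that \eqref{ab} applies with the strict inequality going in the direction that contradicts the additive-optimality inequality. Once the order conventions are fixed this is a purely formal manipulation, and no regularity hypothesis beyond continuity of $W$, $V^*$ and $I^*$ (which are already in place) is needed. The argument also shows that the non-increasing property is independent of the particular selection of $w_x$ among optimizers, provided the selection is made pointwise maximizing the same functional.
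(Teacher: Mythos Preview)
Your argument is correct and is precisely the standard proof of this monotonicity result: swap optimizers, add, and contradict the strict twist inequality. The paper itself does not give a proof of this proposition---it simply refers the reader to \cite{LOS}---so there is nothing to compare at the level of strategy; your write-up is exactly the kind of argument one finds in that reference. The only cosmetic point is that your closing remark about ``independence of the particular selection'' is in fact a slight understatement: your argument shows more, namely that for $x<x'$ \emph{every} optimizer at $x'$ lies weakly below \emph{every} optimizer at $x$, so the optimizer sets themselves are ordered.
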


For a proof see \cite{LOS}.

The results above were used to show how to get  subactions in an explicit form for potentials $A$ which satisfy some restrictions (see \cite{LOS} \cite{LOT}.

\section{Examples}

We point out that, for some kind of questions,  it is  more easy to manipulate expressions with the involution kernel than with the Peierl's barrier.

For example, note that if
$A_1$ and $A_2$ are two potentials and

$$A_1^*(w)= A_1\circ \mathbb{ T}^{-1}(w,x)+ W_1 \circ \mathbb{ T}^{-1}(w,x) -
W_1(w,x),$$

$$A_2^*(w)= A_2\circ \mathbb{ T}^{-1}(w,x)+ W_2 \circ \mathbb{ T}^{-1}(w,x) -
W_2(w,x),$$
then the involution kernel for $(A_1 + A_2)$ is $(W_1+W_2)$ and its dual potential is $(A_1^* + A_2^*).$

We use the notation $W(w,x)=W(...w_2, w_1,w_0\,|\,x_0,x_1,x_2,..)$ and
$$\hat{\chi}_{[z_k,.. z_2,z_1,z_0\,| \,x_0,x_1,...x_n]}(w,x),$$
denotes the indicator function in $\hat{\Omega}$ of the cylinder set
$$[z_k,.. z_2,z_1,z_0\,|\, x_0,x_1,...x_n].$$

\bigskip
All the above was for case  of general potentials. Now, we analyze briefly the case where the potential depends on two coordinates.

\bigskip

The real function on $\tilde{\Omega}$ denoted by $\tilde{\chi}_{[a_1, a_0]}$ is such that is $1$, if $w=(...w_2, w_1,w_0)=( ...w_2, a_0,a_1)$, and, zero otherwise.

\Eg

Given $A=\chi_{[a_0, a_1]}$, the indicator function of the cylinder $[a_0,a_1]$,  defined on $\Omega$, then, for any
$$(w,x)= (...w_2, w_1,w_0\,|\,x_0,x_1,x_2,...),$$ we have

$$ \tilde{\chi}_{[a_1, a_0]}(...w_2, w_1,w_0)=  A(w_0,x_0,...) +  $$
$$\hat{\chi}_{[a_0\,| \,a_1]} (   ...w_2, w_1\,|\,w_0,x_0,x_1,...) -\, \hat{\chi}_{[a_0\,| \,a_1]}(...w_2, w_1,w_0\,|\,x_0,x_1..).$$

Therefore the dual potential of $A=\chi_{[a_0, a_1]}$ is $A^*=\tilde{\chi}_{[a_1, a_0]}$ and its involution kernel is $W=\hat{\chi}_{[a_0\,| \,a_1]}$.

\medskip

We point out that in the case the potential depends on infinite coordinates there is no simple expression for the dual potential (and the involution kernel)

\bigskip

Any potential $A$ which depends on two coordinates on the full shift $\Omega=\{1, ...,d\}^\mathbb{N}$ can be written in the form
$$A = \sum_{i,j\in \{1, ...,d\}} \alpha_{i,j} \, \chi_{[i,j]}$$
and therefore, has involution kernel
$$W = \sum_{i,j\in \{1, ...,d-\}} \alpha_{i,j} \, \hat{\chi}_{[i,j]},$$
and dual potential
$$A^* = \sum_{i,j\in \{1, ...,d\}} \alpha_{i,j} \, \chi_{[j,i]}.$$

In this way if we consider $A$ as a matrix, {\bf the dual $A^*$ is the transpose of $A$.}

\bigskip

\Eg If $A:\{1,2\}^\mathbb{N}\to \mathbb{R}$, which depends on two coordinates, satisfies: $A(1,1)=2,$ $A(2,2)=5,\,$ $ A(1,2)=7$, and $A(2,1)=6$, then, the involution kernel $W$ is
$$W=\, 2\, \hat{\chi}_{[1\,|\,1]}+ 5 \,\hat{\chi}_{[2\,|\,2]} + 7\, \hat{\chi}_{[1\,|\,2]}+ 6\, \hat{\chi}_{[2,1]}.$$

This involution kernel $W$ satisfies the twist condition.

\bigskip

The dual $A^*$ of $A$ (via the involution kernel $W^*$) increase the scope of the concept of transpose for the case $A$ depends on an infinite number of coordinates.
\bigskip

\Eg

When $\Omega=\{0,1\}^\mathbb{N}$ we
we denote $M_k$ the cylinder $[\underbrace{1111...111}_{k}\,0]$, $k \in \mathbb{N}$, $k\geq 1$.

A well known class of potentials (see \cite{Hof} \cite{LZ} \cite{LF} \cite{BLL1}) which are not H\"older is the following: suppose $\gamma$ is a real positive parameter, then take $A=A_\gamma$ such that

\begin{equation*}
\begin{split}&A_\gamma(\underline{x})=-\gamma \log \bigg(\frac{k+1}{k}\bigg)
\quad \text{ if } \underline{x} \in M_k, \,\, k\neq 0\\
&A_\gamma(\underline{x})=-\log(\zeta(\gamma))\quad \text{ if } \underline{x} \in [0]\\
&A_\gamma(11...))=0,
\end{split}
\end{equation*}
where $\zeta(\gamma)=1^{-\gamma}+2^{-\gamma}+\cdots.$

We denote bellow  by $W(k,j)$ the value of the involution kernel in the set
of points $ (w|x) =(... w_1 w_0\,|\,x_0 x_1 x_2...)$ such that $w\in M_k$ and $x\in M_j$, $k,j \in \mathbb{N}$.

The involution  kernel $W$ for these potentials  satisfy
$$e^{W(k,j)}=\frac{(j+1+k)^{-\gamma}}{(j+1)^{-\gamma}(k+1)^{-\gamma}}\zeta(\gamma),$$
$k,j \in \mathbb{N}$.
Note that $$W(k,j)=W(j,k).$$

The dual potential $A^*$ satisfies $A^*=A$. This involution kernel satisfies the twist condition.

These class of potentials $A_\gamma$ present the phenomena of phase transition on the one dimensional lattice $\Omega=\{0,1\}^\mathbb{N}$.

\medskip

The above example came from discussions with A. Araujo.
\medskip

\Eg Suppose the maximizing probability is unique for $A$.

Suppose $V$ is the limit of $\frac{1}{\beta}\, \log \phi_{\beta_n}$, where $\phi_{\beta}$ is the eigenfunction
of the Ruelle operator for $\beta A$. Suppose $V^*$ is the subaction for $A^*$.

If $\hat{\mu}_{max}$ is the
natural extension of the maximizing probability $\mu_{\infty\, A}$, then
for all $(p^*,p)$ in the support of $\hat{\mu}_{max}$ we have the following expression taken from Proposition 10 in \cite{BLT}

$$ V(p)\, + \,V^* (p^*)\,= \,  W(p^*,p ) -  \gamma \,.$$

The above expression appears in a natural way in problems in transport theory (see \cite{Vil1}). It is called the complementary slackness condition. In this case the involution kernel is the natural dynamically defined cost to be considered in the problem (see \cite{LOT} \cite{LOS} \cite{CLO} \cite{LO1} \cite{GL3}). The subactions $V$ and $V^*$ define the dual Kantorovich pair associated to the problem.
\medskip

We denote by $I^*$ the deviation function for the family of Gibbs states $\mu_{\beta A^*}$, $\beta \to \infty$,  according to what was described in section \ref{ldev}. Remember that $I^*$ is zero over the support of the limit measure $\mu_{\infty\, A}^*$.

If  $(p^*,p)$ in the support of $\hat{\mu}_{max}$ (then,
$p\in [0,1]$  is in the support of $\mu_{\infty\, A}$  and  $p^*\in \Omega$  is in the support of $\mu_{\infty\, A}^*$), then
$$V(p) =  \sup_{ w \in \Omega}\, \, W(w,p) - V^*
( w)- I^*(w) -  \gamma\,=$$
$$ \, W( p^*,p)  - V^*
( p^*)- I^*(p^*)\,-  \gamma=          \, W( p^*,p) - V^*
( p^*)\,-  \gamma.
$$

Remember that $V^*$ is the $W- I^*-\gamma$ transform of $V$.

We point out that in problems on Ergodic Transport the following question is important.
It is known that if $V$ is calibrated  for $A$, then, it is true that, for any $z$ in the support of the maximizing probability $\mu_{\infty \,A}$, we have  $V(z)+A(z)-m(A)=V(T(z).$ Generically (on the Holder class) on $A$, one can show that  this equality it true just on the support of the maximizing probability.
An important issue
is to know  if  this last property is also true for the maximizing probability of $A^*$ (see \cite{CLO} for a generic result).


\chapter{Max Plus algebra}\label{chap-maxplus}

 In this chapter we will not consider dynamical systems and
 invariant measures
 for a while in order to introduce  the so called Max Plus algebra. The results of this section will be used in the next one.

\section{Motivation}

  The Max-plus algebra is essentially the algebra of the real
  numbers with two binary operations,
  $a \oplus b = \max(a, b)$ and $a \otimes b = a+b$;
  there are many distinct motivations to introduce this mathematical
  object, some of them being, for example, from problems in operational research.
  Let us imagine, for example, a factory where some worker $i$ needs
  to wait for some of his colleagues $j$ and $k$ to finish their tasks,
  which  takes the times, respectively,  $T_{ij}$ and $T_{ik}$, in order to
  do his job (of the worker $i$) which takes the time $a_i$. Hence, in the factory,  the total time of this work
  is given by $a_i + \max{\{T_{ij}, T_{ik}\}}$; this value can be rewritten
  in the max-plus notation as $a_i \otimes (T_{ij} \oplus T_{ik})$. This shows
  that in a larger system involving a larger number of workers and
  distinct steps, the  max-plus algebra is a convenient and elegant way to formulate the
  problem of distribution of tasks in order. One can use the techniques of this theory in order to make the whole process
  more efficient.

  From a pure mathematical point of view we can also see this algebra, for example,
  as the structure of the exponential growth of real functions.
  Given a function $h \colon \RR \to \RR$ we define the exponential
  growth of $h$ as the limit (if, of course, the limit exists)
  $$
    e(h) = \lim_{x \to +\infty}{ \frac{1}{x} \log{h(x)}  }.
  $$
  To fix ideas,
 consider the simple case
 $$
     f(x) = e^{ax}   \qquad \text{and} \qquad g(x) = e^{bx}.
 $$
 From the definition it is easy to see that $e(f)= a$ and $e(g) = b$.
Now, what is the  exponential growth of $f g$ or $f+g$?
For $f g = e^{ax} e^{bx} = e^{a+b}$ we have that the function
$fg$ grows with a rate of $a+b$, say $e(f)+e(g)$. Hence
$e(fg) = e(f) + e(g) = e(f) \otimes e(g)$. For $f+g$,
we have that $f+g=e^{ax}+e^{bx} = e^{\max{ \{a, b \}} x}(1+o(1))$. Hence,
$e(f+g) = \max{ \{e(f), e(g) \}} = e(f) \oplus e(g)$. For this reason,
it is not a surprise that this technique appears also in the setting
of zero temperature limits, where we are exactly talking about comparing
certain exponential growth rates.

General references on Max-Plus Algebra are \cite{ABG}, \cite{AGG}, \cite{CD}, \cite{CD2}, \cite{Baca}, \cite{Chung}, \cite{Far} and \cite{Conc}.

\section{Notation and basic properties}

 In this text we use
$$
    \bar{\RR} = \RR \cup {-\infty}
$$
with the convention that $x + (-\infty) = \infty$ for any $x \in \bar{\RR}$.

We endow this set with two operations:
$$
    a \oplus b = \max(a, b)
$$
$$
   a \otimes b = a+b .
$$
With this notation, the convention above is rewritten as $a \otimes -\infty = -\infty$ and
we also have $a \oplus -\infty = a$, showing that $-\infty$ is the neutral element for
the binary operation $\oplus$.

For the operation $\otimes$ we have that
$ a \otimes 0 = a+0 = a$ for any $a \in \bar{\RR}$, showing that
$0$ is its neutral element.

The so called max-plus algebra is then the semi-ring over $\bar{\RR}$ defined by
the operations $\oplus$ and $\otimes$.

Some of the main properties of this algebra are listed below:
\begin{lemma}
given $a, b$ and $c$ in $\bar{\RR}$ ,we have
\begin{itemize}
\item[1-] Associativity: $a \oplus ( b \oplus c ) = (a \oplus  b ) \oplus c $
            and $a \otimes ( b \otimes c ) = (a \otimes  b ) \otimes c $
\item[2-] Commutativity:  $a \oplus  b  = b \oplus  a $ and  $a \otimes  b  = b \otimes  a $
\item[3-] Distributivity: $a \otimes (b \oplus c) = (a \otimes b) \oplus (a \otimes c)$
\item[4-] Additive identity: $a \oplus (-\infty) = (-\infty) \oplus a = a$
\item[5-] Multiplicative identity: $a \otimes 0 = 0 \otimes a = a$
\item[6-] Multiplicative inverse: if $a \neq -\infty$ then there exists a unique
                   $b$ such that $a \otimes b = 0$
\item[7-] Absorbing element: $a \otimes -\infty = -\infty \otimes a = -\infty$
\item[8-] Idempotency of addition: $a \oplus a = a$.
\end{itemize}
\end{lemma}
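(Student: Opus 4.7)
The plan is that each of the eight properties reduces to a well-known property of $\max$ and ordinary addition on $\bar{\RR}$, together with the conventions stipulated before the lemma (namely $x+(-\infty)=-\infty$ for every $x\in\bar{\RR}$). So the proof will be a short, systematic verification rather than a substantive argument; the only item that requires any actual thought is distributivity.

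First I would handle properties 1, 2, 4, 5, 7, 8 in parallel. For associativity and commutativity of $\oplus$, I invoke the corresponding identities $\max(a,\max(b,c))=\max(a,b,c)=\max(\max(a,b),c)$ and $\max(a,b)=\max(b,a)$, which hold on $\RR$ and extend to $\bar{\RR}$ because $-\infty$ is the minimum. For associativity and commutativity of $\otimes$, I use the same properties of $+$, noting that the convention $x+(-\infty)=-\infty$ makes both $a\otimes(b\otimes c)$ and $(a\otimes b)\otimes c$ equal to $-\infty$ whenever any of $a,b,c$ equals $-\infty$, and equal to $a+b+c$ otherwise. Properties 4 and 8 ($a\oplus(-\infty)=a$ and $a\oplus a=a$) follow immediately from $\max$, and properties 5 and 7 ($a\otimes 0=a$ and $a\otimes(-\infty)=-\infty$) follow from the definition of $\otimes$ together with the stated convention.

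Next I would settle property 6. Given $a\in\bar{\RR}$ with $a\neq-\infty$, the real number $b:=-a$ satisfies $a\otimes b=a+(-a)=0$, and uniqueness follows because $a\otimes b=a\otimes b'$ forces $a+b=a+b'$ in $\RR$, hence $b=b'$; note that $a\neq-\infty$ is essential here, since $-\infty\otimes b=-\infty\neq 0$ for every $b\in\bar{\RR}$.

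The main (mild) obstacle is distributivity, property 3. I would argue as follows: if $b\ge c$ then $a+b\ge a+c$ (monotonicity of addition, valid on $\bar{\RR}$ under the convention $x+(-\infty)=-\infty$), so $\max(a+b,a+c)=a+\max(b,c)$; the case $b\le c$ is symmetric. The only subtlety is when some entry equals $-\infty$: if $a=-\infty$, both sides equal $-\infty$; if $b=-\infty$, the identity reduces to $a+c=\max(-\infty,a+c)=a+c$; and similarly if $c=-\infty$. Thus in every case $a\otimes(b\oplus c)=(a\otimes b)\oplus(a\otimes c)$, completing the proof.
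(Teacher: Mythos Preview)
Your proof is correct and follows the same approach as the paper: both reduce each property to the corresponding fact about $\max$ and ordinary addition on $\bar{\RR}$. The paper is in fact much terser than you are --- it only writes out distributivity and existence of the multiplicative inverse, leaving the remaining properties (and uniqueness in item 6) to the reader --- so your systematic verification, including the $-\infty$ edge cases, is more complete than what appears in the text.
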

\begin{proof}
  We will show just some of the above properties, leaving the others
  to the reader.

Distributivity, for example, follows from
$$
  a \otimes (b \oplus c ) = a + \max(b, c) = \max(a+b, a+c) = (a \otimes b) \oplus (a \otimes c) .
$$

For the multiplicative inverse, just notice that for any $a$ we can take $b=-a$
and so
$$
 a \otimes b =  a + (-a) = 0 .
$$
\end{proof}

\section{Linear algebra}

\subsection{Vectors}

A $d$-dimensional vector $v$ is an element of $\bar{\RR}^d$, which is  denoted by
$v=(v_1, v_2, \ldots, v_d)$, or, as is usual, represented as a column vector
$$
  v =
\left[
  \begin{array}{c}
    v_1  \\
    v_2  \\
    \vdots \\
    v_d
  \end{array}
 \right].
$$

Given two vectors $u$ and $v$ in $\bar{\RR}^d$ and $\la \in \bar{\RR}^d$,
we can define the sum of two vectors as
$$
  u \oplus v := (u_1 \oplus v_1, u_2 \oplus v_2, \ldots, u_d \oplus v_d),
$$
and, the product by an scalar as
$$
  \la \otimes u := ( \la \otimes u_1, \la \otimes u_2, \ldots, \la \otimes u_d).
$$

\subsection{Matrices}

An $m \times n$  matrix $A$ is defined as in the usual case.
Given two $m \times n$ matrices $A$ and $B$, we define
$A \oplus B$ as the matrix whose entries are
$$
   (A \oplus B)_{ij} := A_{ij} \oplus B_{ij} = \max{\{A_{ij}, B_{ij} \}}.
$$
Given $\la \in \bar{\RR}$, we also define the matrix
$\la \otimes A$ as
$$
  (\la \otimes A)_{ij} = \la \otimes A_{ij} = \la + A_{ij}.
$$

From the basic properties of the operations $\oplus$ and $\otimes$,
it is not hard to see that the matrix operations above satisfies
the following properties:
\begin{lemma}
Given $m \times n$ matrices $A, B$ and $C$, and some
$\la \in \bar{\RR}$ we have:

There exists a matrix $[-\infty]$ such that
$$
   A \oplus [-\infty] = A ,
$$
$$
   A \oplus B = B \oplus A ,
$$
$$
  A \oplus (B \oplus C) = (A \oplus B) \oplus C ,
$$
$$
 \la \otimes A = A \otimes \la ,
$$
$$
  \la \otimes (A \oplus B) = (\la \otimes A) \oplus (\la \otimes B) .
$$
\end{lemma}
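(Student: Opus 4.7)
The plan is to reduce every matrix identity to the corresponding scalar identity by checking entries, since all the matrix operations introduced are defined entry-wise. The scalar identities are already established in the previous lemma (associativity, commutativity, distributivity, and the role of $-\infty$ as additive identity for $\oplus$), so this lemma will be essentially a bookkeeping exercise.

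First I would define the matrix $[-\infty]$ as the $m \times n$ matrix with all entries equal to $-\infty$. Then the identity $A \oplus [-\infty] = A$ follows entry-wise from $A_{ij} \oplus (-\infty) = A_{ij}$, which is property 4 of the previous lemma. Commutativity $A \oplus B = B \oplus A$ is immediate from $A_{ij} \oplus B_{ij} = B_{ij} \oplus A_{ij}$, and associativity follows analogously by invoking the scalar associativity of $\oplus$ on each coordinate.

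For the interaction with the scalar $\lambda$, I would unfold the definition: $(\lambda \otimes A)_{ij} = \lambda \otimes A_{ij} = A_{ij} \otimes \lambda = (A \otimes \lambda)_{ij}$, using commutativity of $\otimes$ on scalars. Distributivity is the only one requiring a little attention: one computes
\[
(\lambda \otimes (A \oplus B))_{ij} = \lambda \otimes (A_{ij} \oplus B_{ij}) = (\lambda \otimes A_{ij}) \oplus (\lambda \otimes B_{ij}) = ((\lambda \otimes A) \oplus (\lambda \otimes B))_{ij},
\]
where the middle equality is the scalar distributivity from property 3 of the previous lemma.

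There is no real obstacle here; the only minor subtlety is to make the convention $A \otimes \lambda := \lambda \otimes A$ explicit (since the paper has not yet defined right multiplication of a matrix by a scalar), and to handle the case $\lambda = -\infty$ by noting that $-\infty$ is absorbing for $\otimes$, so both sides of any identity collapse to the all-$-\infty$ matrix. I would finish by remarking that all properties are in fact consequences of the semi-ring structure of $(\bar{\RR},\oplus,\otimes)$ applied coordinate-wise, which sets up the natural framework for discussing eigenvalues and eigenvectors of max-plus matrices in the next section.
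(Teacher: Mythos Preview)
Your proposal is correct and is precisely the natural entry-wise reduction to the scalar semi-ring properties established in the preceding lemma. The paper itself states this lemma without proof, so there is nothing to compare against; your argument is exactly the routine verification the authors implicitly leave to the reader.
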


Given an $m \times n$ matrix $A$ and an $n \times l$ matrix $B$ we can
define the matrix product $AB$ as
$$
  (AB)_{ij} = \bigoplus_{k}( A_{ik} \otimes B_{kj}) = \max_k\left( A_{ik}+B_{kj} \right) .
$$
\begin{lemma}
Moreover,
$$
 (AB)C = A(BC) ,
$$
$$
 \la \otimes AB = A (\la \otimes B) = AB \otimes \la .
$$
\end{lemma}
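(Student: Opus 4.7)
The plan is to verify both assertions by direct expansion of the definition of the matrix product and exploiting the two algebraic facts already established: commutativity and associativity of $\oplus = \max$, together with distributivity of $\otimes = +$ over $\oplus$. The whole argument is formally parallel to the classical proof that ordinary matrix multiplication is associative, with $\sum$ replaced by $\max$ and $\cdot$ replaced by $+$.

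First I would handle associativity. For matrices $A$ ($m\times n$), $B$ ($n\times l$), $C$ ($l\times p$), and for fixed indices $i,j$, I would expand
$$((AB)C)_{ij} \;=\; \bigoplus_{s}\bigl((AB)_{is}\otimes C_{sj}\bigr) \;=\; \max_{s}\Bigl(\max_{k}(A_{ik}+B_{ks}) + C_{sj}\Bigr).$$
Since addition of the constant $C_{sj}$ distributes over $\max$ (this is the distributivity property already recorded in the earlier lemma), I can push $C_{sj}$ inside to obtain $\max_{s}\max_{k}(A_{ik}+B_{ks}+C_{sj})$. Commutativity of $\max$ in two independent indices then allows swapping the order of the maxima, and pulling $A_{ik}$ out of the inner max yields $\max_{k}\bigl(A_{ik} + \max_{s}(B_{ks}+C_{sj})\bigr) = \max_{k}(A_{ik}+(BC)_{kj}) = (A(BC))_{ij}$. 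This holds for all $i,j$, giving $(AB)C = A(BC)$.

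For the scalar compatibility, the key identity is just $\la + \max_{k} x_k = \max_{k}(\la + x_k)$, which is again distributivity of $\otimes$ over $\oplus$. Entrywise,
$$(\la\otimes AB)_{ij} = \la + \max_{k}(A_{ik}+B_{kj}) = \max_{k}(A_{ik}+\la+B_{kj}) = (A(\la\otimes B))_{ij},$$
and since addition is commutative in $\R$ we may place $\la$ in the last position instead, yielding $(AB\otimes \la)_{ij}$. The case $\la = -\infty$ (where both sides collapse to the $[-\infty]$ matrix by the absorbing property) has to be checked separately, but is immediate.

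There is really no serious obstacle: the only point demanding care is bookkeeping of indices and a clean invocation of distributivity when moving a constant past a $\max$. A minor subtlety worth a single sentence is the edge case in which some entries equal $-\infty$, where one must remember the convention $x + (-\infty) = -\infty$ so that empty-row or empty-column situations do not cause the identities to fail; this is already guaranteed by working in $\bar{\R}$ with the absorbing property of $-\infty$ stated earlier.
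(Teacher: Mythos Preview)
Your argument is correct. The paper states this lemma without proof, so there is nothing to compare against; your direct entrywise verification using distributivity of $+$ over $\max$ and the interchange of the two maxima is exactly the standard (and essentially the only) way to establish these identities.
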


If $m = n$ we say that the matrix $A$ is a square matrix of order $n$.
Consider the matrix
$$
   I_n =
   \left[
     \begin{array}{ccccc}
       0       &   -\infty  &  -\infty  &  \ldots  &  -\infty   \\
      -\infty  &      0     &  -\infty  & \ldots   &  -\infty   \\
        \vdots &    \vdots  &   \vdots  &   \vdots &    \vdots  \\
      -\infty  &    \ldots  &   \ldots  &  -\infty &     0
       \end{array}
   \right].
$$
Then, we can show that
$$
     A I_n = I_n A = A,
$$
for any order $n$ matrix $A$.

\subsection{Eigenvectors and eigenvalues}

Now consider a $n \times n$ matrix $A$ whose entries are elements
of $\bar{\RR}$ and a column vector $v$.

We define the product
$A v$ such that
$$
 (Ax)_i = \bigoplus_j (A_{ij} \otimes v_j) =  \max_j{(A_{ij} + v_j) }.
$$
For a given $\la \in \bar{\RR}$ we also define
$$
 \la v = (\la \otimes v_1, \ldots, \la \otimes v_n) = (\la + v_1, \ldots, \la + v_n).
$$

In this setting it is a very natural question to look for
max plus eigenvectors and eigenvalues for $A$, in the sense
that, $A v = \la v$;
this notation can be translated in
terms of our usual operations as
$$
   \max_j{(A_{ij} + v_j) }  = \la + v_i   \;\;\; \text{for any $i = 1, \ldots, n$}.
$$

\Eg $$
 \left[
  \begin{array}{cc}
   1  &   0  \\
   0  &   1
  \end{array}
 \right]
 \left[
  \begin{array}{cc}
   0   \\
   -1
  \end{array}
 \right] =
 \left[
 \begin{array}{cc}
   \max{(1, -1)}   \\
   \max{(0 ,0)}
  \end{array}
 \right] =
 \left[
 \begin{array}{cc}
   1   \\
   0
  \end{array}
 \right] =
  1
  \left[
   \begin{array}{cc}
   0   \\
   -1
  \end{array}
 \right].
$$
We also have
$$
 \left[
  \begin{array}{cc}
   1  &   0  \\
   0  &   1
  \end{array}
 \right]
 \left[
  \begin{array}{cc}
   -1   \\
    0
  \end{array}
 \right] =
 \left[
 \begin{array}{cc}
   \max{(0, 0)}   \\
   \max{(-1 , 1)}
  \end{array}
 \right] =
 \left[
 \begin{array}{cc}
    0   \\
    1
  \end{array}
 \right] =
  1
  \left[
   \begin{array}{cc}
    -1   \\
     0
  \end{array}
 \right].
$$

In this case we see that $1$ is an eigenvalue associated with two
distinct eigenvectors.

\Eg Consider
$$
   \left[
     \begin{array}{cc}
       -\infty   &    a   \\
          b      &  \infty
     \end{array}
   \right],
$$
which has eigenvalue $\la = (a+b)/2$ and eigenvector
$$
   \left[
     \begin{array}{c}
         x     \\
        x+ (b-a)/2
     \end{array}
   \right]
     =
      x \otimes
     \left[
     \begin{array}{c}
         0     \\
         (b-a)/2
     \end{array}
   \right]
$$
(where any choice of $x$ is allowed).

The most important result, with respect to our purposes, is that matrices with real entries have a unique eigenvalue.
\begin{theorem}\label{theo-eigenmaxplusmatrix}
  Let $A$ be a $d \times d$ matrix with all entries $a_{ij} \in \RR$; then, there
  exists a real number $\la$ and a vector $v$, such that, $A v = \la v$; moreover,
  the eigenvalue $\la$ is unique.
\end{theorem}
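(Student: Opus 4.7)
The plan is to identify $\lambda$ combinatorially as the maximum cycle mean of a weighted digraph, prove uniqueness by a telescoping/pointing argument along the eigenvalue equation, and prove existence via a Kleene-star construction. Let $G_A$ denote the weighted complete digraph on vertices $\{1,\ldots,d\}$ in which the arc $i\to j$ carries weight $a_{ij}$. Because all entries are real and the collection of simple cycles is finite, the quantity
$$\lambda := \max\Big\{\tfrac{1}{k}(a_{i_1 i_2}+a_{i_2 i_3}+\cdots+a_{i_k i_1}) \,:\, k\ge 1,\ i_1,\ldots,i_k\in\{1,\ldots,d\}\Big\}$$
is a well-defined real number. A cycle attaining this maximum will be called \emph{critical}, and a vertex lying on some critical cycle a \emph{critical node}.

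For uniqueness, I would suppose $Av=\mu v$ for some vector $v$ with at least one finite coordinate. Unfolding the definition, for every index $i$ in the support of $v$ there exists $j(i)$ such that $a_{i,j(i)}+v_{j(i)}=\mu+v_i$. Iterating $i\mapsto j(i)$ and using finiteness of the index set, one obtains a closed cycle $i_0\to i_1\to\cdots\to i_p=i_0$ along which all of these equalities hold. Summing them and telescoping the $v$-terms yields $a_{i_0 i_1}+\cdots+a_{i_{p-1}i_p}=p\mu$, so $\mu$ is the mean of an actual cycle. Conversely, for any cycle $j_0\to\cdots\to j_q=j_0$, the inequality $a_{j_s j_{s+1}}+v_{j_{s+1}}\le \mu+v_{j_s}$ telescopes to $\sum_s a_{j_s j_{s+1}}\le q\mu$. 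Hence $\mu$ is simultaneously an upper bound for, and attained by, a cycle mean; therefore $\mu=\lambda$.

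For existence, I would consider $B:=(-\lambda)\otimes A$, the matrix with entries $b_{ij}=a_{ij}-\lambda$. By construction the maximum cycle mean of $G_B$ equals $0$. Define the Kleene star
$$B^{\star} := I_d\oplus B\oplus B^{2}\oplus\cdots\oplus B^{d-1},$$
where powers are taken in the max-plus sense. Since no cycle of $G_B$ has positive weight, any maximum-weight walk between two nodes can be shortened to a simple path without decreasing its weight, so this finite sum indeed coincides with the full infinite Kleene star. Fix a critical node $i^{\star}$; I would take $v$ to be the $i^{\star}$-th column of $B^{\star}$ and verify $B\otimes v = v$. The inequality $B\otimes v\le v$ is automatic because prepending an arc to a walk ending at $i^{\star}$ produces a walk ending at $i^{\star}$, hence is dominated by $B^{\star}$; the reverse inequality uses the zero-weight critical cycle through $i^{\star}$ to realise equality. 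Converting back to $A$ gives $Av=\lambda v$.

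The main obstacle I expect is the existence step, and more precisely the verification $B\otimes v=v$ (and not merely $B\otimes v\le v$). The delicate point is that non-positivity of all cycle weights is only enough to bound $B^{\star}$ from above; what forces equality at the basepoint $i^{\star}$ is that this vertex lies on a \emph{zero}-weight cycle of $G_B$, which is exactly where the maximality in the definition of $\lambda$ enters. Once this fixed-point identity is established, the uniqueness argument above immediately precludes any other eigenvalue.
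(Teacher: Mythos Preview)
Your proof is correct and takes a genuinely different route from the paper. The paper obtains existence by a topological argument: it normalizes so that $0\le a_{ij}\le L$, defines the continuous self-map $T$ of the cube $[0,L]^d$ by $(Tx)_i=\max_j(a_{ij}+x_j)-\min_k\max_j(a_{kj}+x_j)$, and invokes Brouwer's fixed point theorem; the fixed point is the eigenvector and the subtracted minimum is the eigenvalue. For uniqueness the paper assumes $Av=\lambda v$, $Au=\mu u$ with $\lambda<\mu$, picks $t$ large so that $t\otimes v\ge u$, and iterates $A^n$ on $t\otimes v\oplus u=t\otimes v$ to reach a contradiction. By contrast, you work combinatorially: you identify $\lambda$ explicitly as the maximum cycle mean, build the eigenvector as a column of the Kleene star of the normalized matrix $B=(-\lambda)\otimes A$ based at a critical node, and prove uniqueness by telescoping along realizing arcs. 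Your approach is the standard one in the max-plus literature; it has the advantage of being constructive and of yielding the closed formula for $\lambda$, which the paper's Brouwer argument does not provide. The paper's route is shorter to state and avoids any graph-theoretic bookkeeping, but leaves $\lambda$ implicit. One small point you leave tacit: because all entries of $A$ are real, any eigenvector with a finite coordinate automatically has all coordinates finite (otherwise $(Av)_j$ would be finite while $\mu\otimes v_j=-\infty$), which is what makes your telescoping inequality over an arbitrary cycle legitimate.
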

\begin{proof}
 First of all, notice that if $M u = \mu u$, then
 $\la M v= \la \otimes \mu v = (\la + \mu) v$.
 Note that
 $$ \la M =
 \left[
  \begin{array}{cccc}
   \la+m_{11}  &   \la+m_{12}  &  \cdots  & \la+m_{1d} \\
   \la+m_{21}  &   \la+m_{22}  &  \cdots  & \la+m_{2d} \\
     \vdots    &    \vdots     &  \vdots  &  \vdots    \\
   \la+m_{d1}  &    \cdots     &  \cdots  &  \la+m_{dd}
  \end{array}
 \right]
 $$
 Hence, there is no loss of generality if we assume that the entries
 of $A$ are all non-negative. So, we get
$$
    0 \leq a_{ij}  \leq L .
$$
Now, let us define the map $T \colon \RR^n \to \RR^n$ as
$$
  (Tx)_i = \max_j{(A_{ij} + x_j )}  -  \min_k{ \max_j{(A_{kj}+x_j)}  }.
$$
  It is
easy to see that the expression depends continuously on
the vector $x$.  It is also clear from the definition that $(Tx)_i \geq 0$.
On the other hand, we have
$$
   (Tx)_i \leq  \max_j{(L + x_j)} - \min_k{ \max_j{(0+x_j)}  }  = $$
$$\max_j{(L+x_j)} - \max_j{(x_j)} = L  .
$$
In particular, this shows that the region $\{x_j : 0 \leq x_j \leq L \}$ is mapped
inside itself by $T$; since $T$ is continuous, this implies (by means of Brouwer fixed
point theorem) that $T$ has at least one fixed point $v$.
Hence,
$$
 v = T(v) \Rightarrow v_i = (Tv)_i = \max_j{(A_{ij}+ v_j)} - \min_k{ \max_j{(A_{kj} + v_j )} }.
$$
Denoting
$$
  \la = \min_k{ \max_j{(A_{kj} + v_j ) } },
$$
then, the expression above implies
$$
     v = Av - \la  \Rightarrow \la + v = Av  \Rightarrow \la v = Av,
$$
in the max-plus sense, as claimed.

For the uniqueness, let us assume, by contradiction, that we have
two distinct eigenvalues $\la$ and $\mu$. In other words, there
exists vectors $v$ and $u$, such that,
$$
   A v = \la v  \qquad \text{and}  \qquad A u = \mu u.
$$
Without loss of generality we can assume $\la < \mu$.
It is possible to take a large $t$, such that,
$t v  \geq u$ (in the sense that $tv_i \geq u_i$, for each $i \in \{1, \ldots, d\}$).
Then,
$$
     tv \oplus u = tv.
$$
Hence,
$$
  A^n( tv \oplus u ) = A^n(tv) \oplus A^n(u) = A^n (tv)  \Rightarrow
$$
$$
  t A^n(v) \oplus A^n(u) = t A^n(v)  \Rightarrow t \la^n v \oplus \mu^n = t \la^n v,
$$
which is equivalent to say that for any $n$, we have $t \la^n v \geq \mu^n u$,
and this is a contradiction, since $\la < \mu$. Then, we get $\la = \mu$ and the max-plus
eigenvalue is unique.
\end{proof}

If we drop the hypothesis of real entries the situation is quite different. Consider, for
example,
$$
  A =
     \left[
       \begin{array}{cccc}
         1    &   1    &   -\infty   &  -\infty    \\
         1    &   1    &   -\infty   &  -\infty    \\
      -\infty  &  -\infty  &   2    &   2           \\
      -\infty  &  -\infty  &   2    &   2
       \end{array}
     \right] .
$$
Then, it is not very hard to see that
$$
   A  \left[
       \begin{array}{c}
            -\infty    \\
            -\infty    \\
               1       \\
               1
       \end{array}
     \right]
       =
       \left[
       \begin{array}{c}
            -\infty    \\
            -\infty    \\
               3       \\
               3
       \end{array}
     \right]  =
     2 \otimes
      \left[
       \begin{array}{c}
            -\infty    \\
            -\infty    \\
               1       \\
               1
       \end{array}
     \right],
$$
and, that
$$
  A  \left[
       \begin{array}{c}
                1   \\
                1   \\
            -\infty    \\
            -\infty
       \end{array}
     \right]
       =
       \left[
       \begin{array}{c}
                2    \\
                2    \\
            -\infty    \\
            -\infty
       \end{array}
     \right]  =
     1 \otimes
      \left[
       \begin{array}{c}
               1      \\
               1      \\
            -\infty    \\
            -\infty
       \end{array}
     \right].
$$
Hence, $1$ and $2$ are max-plus eigenvalues of $A$ showing that the uniqueness
of the eigenvalue does not hold for matrices with $-\infty$ entries.

\section{Final remarks}

 There exists some variations of the max-plus algebra, one of then
 being the min-plus algebra, where the binary operation $\oplus$
 is replaced by $a \oplus b = \min(a, b)$; this algebra
 is also known as tropical algebra. In this context its
 interesting, for example, to study the behavior of
 polynomials like
$$
   p(x) = a_0 \oplus (a_1 \otimes x) \oplus (a_2 \otimes x \otimes x) = \min{\{  a_0, a_1+ x, a_2 + 2 x  \}} .
$$
Its graph, for instance, is a union of the segments (some of them of
infinite length) and the reader is invited to sketch this picture.
The geometrical investigation of those objects is known as
tropical geometry.


\chapter{An explicit example with all the computations}\label{chap-explexam}

We consider the following particular case: $\Om:=\{1,2,3\}^{\N}$ and $A$ is a  non-positive potential depending only on two coordinates.
For each pair $(i,j)$ we set $A(i,j)=-\eps_{ij}$. We assume that
$$\eps_{11}=\eps_{22}=0,$$
and for every other pair $\eps_{ij}>0$.

Note that there are only two ergodic $A$-maximizing measures, namely, $\delta_{1^{\8}}$ 	and $\delta_{2^{\8}}$, which are the Dirac measures at $1^{\8}:=111\ldots$ and at $2^{\8}:=222\ldots$.
The Aubry set is exactly the union of these two fixed points and each one is an irreducible component.

We remind that for each $\be$, the unique equilibrium state is given by
$$\mu_{\be}=H_{\be}\nu_{\be},$$
where $H_{\be}$ and $\nu_{\be}$ are the eigenvectors for the transfer operator. Its spectrum is $\disp e^{\CP(\be)}$.

We have seen in Subsection \ref{subsec-moreresulpressure} that $\disp \CP'(\be)=\int A\,d\mu_{\be}$.  This quantity is negative because $A$ is non-positive and negative on a set of positive measure (for $\mu_{\be}$ and for any $\be$). We have seen (see the comments after Remark \ref{rem-entropyresidual}) that the asymptote of the pressure is of the form
$$h_{max}+\be.m(A),$$
where $h_{max}$ is the residual entropy and is the entropy of the Aubry set. In our case, we have $h_{max}=0$ and $m(A)=0$. Then, $\disp\lim_{\be\to+\8}\CP(\be)=0$.
The first role of the max-plus algebra seems to determine how the pressure goes to $0$ as $\be$ goes to $+\8$.

\begin{proposition}
\label{prop-pressure}
There exists a positive sub exponential function $g$  and a positive real number $\rho$ such that $\CP(\be)=g(\be)e^{-\rho.\be}$.
\end{proposition}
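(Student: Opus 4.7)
The plan is to carry out an explicit spectral analysis of the $3\times 3$ matrix $M_\be := (e^{\be A(i,j)})_{1\le i,j\le 3}$. By Lemma \ref{lem-spectradiuCLA} and the fact that $A$ depends only on two coordinates, $e^{\CP(\be)}$ equals the Perron eigenvalue $\la_\be$ of $M_\be$. The Aubry set is $\{1^{\8},2^{\8}\}$, both fixed points having zero entropy, so the residual entropy vanishes, $m(A)=0$, and by Proposition \ref{prop-asymptopress} together with the monotonicity of $\be\mapsto \CP(\be)$ (since $\CP'(\be)=\int A\,d\mu_\be\le 0$ here), one gets $\CP(\be)\searrow 0^+$ and therefore $\la_\be\to 1^+$. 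Writing $t_\be:=\la_\be-1$, one has $\CP(\be)=\log(1+t_\be)=t_\be(1+o(1))$, so it is enough to show $t_\be = g(\be)\,e^{-\rho\be}$ with $g$ sub-exponential.

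First I would compute the characteristic polynomial of $M_\be$ in the variable $t=\la-1$. Using $A(1,1)=A(2,2)=0$ and writing $s_\be:=1-e^{-\be\eps_{33}}\to 1$, a direct expansion of $\det(M_\be-(1+t)I)$ gives the cubic
\begin{equation*}
 t^3 + s_\be\, t^2 \;-\; P_2(\be)\, t \;-\; Q(\be)\;=\;0,
\end{equation*}
where $P_2(\be)$ is the sum of the three 2-cycle weights $e^{-\be(\eps_{ij}+\eps_{ji})}$ and
\begin{equation*}
 Q(\be)\;=\;s_\be\,e^{-\be(\eps_{12}+\eps_{21})} \;+\; e^{-\be(\eps_{12}+\eps_{23}+\eps_{31})} \;+\; e^{-\be(\eps_{13}+\eps_{21}+\eps_{32})}
\end{equation*}
collects the weight of the $1\!-\!2$ loop and of the two oriented 3-cycles on $\{1,2,3\}$. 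Both $P_2$ and $Q$ are finite positive combinations of pure exponentials with strictly positive rates.

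Next I would extract the positive root $t_\be$ asymptotically. Since $t_\be\to 0$ the $t^3$-term is subordinate, so the relevant leading balance is the quadratic $s_\be t^2-P_2 t-Q=0$, whose positive solution is
\begin{equation*}
 \tau_\be \;:=\; \frac{P_2(\be)+\sqrt{P_2(\be)^2+4\,s_\be Q(\be)}}{2\,s_\be}.
\end{equation*}
A standard implicit-function argument then produces a unique positive root of the full cubic satisfying $t_\be=\tau_\be(1+o(1))$, and Perron--Frobenius identifies this root as $\la_\be-1$. Let $\al_2$ be the smallest exponent in $P_2$ and $\al_Q$ the smallest in $Q$; then $e^{\al_2\be}P_2(\be)$ and $e^{\al_Q\be}Q(\be)$ tend to strictly positive constants, and the explicit formula for $\tau_\be$ gives positive constants $C_1\le C_2$ with $C_1 e^{-\rho\be}\le t_\be\le C_2 e^{-\rho\be}$ for $\be$ large, where
\begin{equation*}
 \rho \;:=\; \min\!\bigl(\al_2,\ \tfrac12 \al_Q\bigr)\;>\;0.
\end{equation*}
Setting $g(\be):=\CP(\be)\,e^{\rho\be}$, the relation $\CP(\be)=t_\be(1+o(1))$ yields $g$ bounded away from $0$ and $\8$; in particular $\frac{1}{\be}\log g(\be)\to 0$, which is the desired sub exponential behaviour, and $\CP(\be)=g(\be)\,e^{-\rho\be}$.

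The main obstacle will be making the asymptotic balance uniform across parameter regimes: depending on which $\eps_{ij}$ is smallest, different terms of $P_2$ or $Q$ dominate, so one must justify in each case that the $t^3$ correction is negligible relative to $t^2$ and that $\tau_\be$ truly approximates the Perron root. Because $P_2$ and $Q$ are finite sums of positive exponentials, this is a finite case analysis, and the degenerate situations where several exponents coincide at the infimum only affect the value of the prefactor of $g$, leaving the conclusion $\rho>0$ intact.
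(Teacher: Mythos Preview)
Your argument is correct and reaches the same value of $\rho$ as the paper, but the route is genuinely different. The paper does not touch the characteristic polynomial: it fixes an arbitrary accumulation point $-\rho$ of $\frac1\be\log\CP(\be)$, passes to a subsequence along which $\frac1\be\log H_\be\to V$, and takes $\frac1\be\log$ of the two eigenfunction equations at the symbols $1$ and $2$ to obtain a max-plus relation. Using the Peierl's barrier (Theorem~\ref{theo-subcpeierl}) to eliminate $V(3)$ in favour of $V(1),V(2)$, this becomes a $2\times 2$ max-plus eigenvalue equation for $(-\rho,V)$; uniqueness of the max-plus eigenvalue (Theorem~\ref{theo-eigenmaxplusmatrix}) then forces every accumulation point to coincide, and one simply sets $g(\be):=\CP(\be)e^{\rho\be}$.

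Your approach trades all of this machinery for an explicit cubic in $t=\la_\be-1$: the clean form $t^3+s_\be t^2-P_2(\be)t-Q(\be)=0$ makes the rate $\rho=\min(\al_2,\tfrac12\al_Q)$ readable from the quadratic balance, and the implicit-function step handling the $t^3$ correction is routine. This is more elementary and in fact yields a little more at this stage---two-sided constant bounds on $g$, not merely sub-exponentiality---which the paper only obtains later in Proposition~\ref{prop-funcg}. The price is that your computation is tied to the $3\times3$ structure, whereas the paper's max-plus argument is meant as a template: the identification of $-\rho$ as a max-plus eigenvalue, together with the Peierl's-barrier reduction to coordinates on the irreducible components of the Aubry set, is the mechanism one would hope to export to larger alphabets or to potentials that are not locally constant.
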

\begin{proof}
First we consider any accumulation point $-\rho$ for $\disp\frac1\be\log\CP(\be)$. We shall prove that this $-\rho$ is actually unique, namely, it does not depend on the chosen subsequence.

Considering the subsequence which realizes the expression for $-\rho$,  we can always extract another subsequence such that $\disp\frac1\be\log H_{\be}$ converges. We denote by $V$ this limit, and we have seen above that $V$ is indeed a calibrated subaction.

For simplicity we shall write $\binf$ even if we consider a restricted subsequence.

Moreover, $H_{\be}$ and $V$ only depend on one coordinate, and we shall write $H_{\be}(i)$ or $V(i)$ for $H_{\be}(x)$ and $V(x)$ with $x=i\ldots$.

The eigenfunction $H_{\be}$ is an eigenvector for $\CL_{\be}$ and this yields
\begin{eqnarray*}
e^{\cpb}H_{\be}(1)&=&e^{\be.A(1,1)}H_{\be}(1)+e^{\be. A(2,1)}H_{\be}(2)+e^{\be.A(3,1)}H_{\be}(3)\\
e^{\cpb}H_{\be}(2)&=&e^{\be.A(1,2)}H_{\be}(1)+e^{\be. A(2,2)}H_{\be}(2)+e^{\be.A(3,2)}H_{\be}(3).
\end{eqnarray*}
Replacing with the values for $A$ we get the two following equations:
\begin{subeqnarray}
(e^{\cpb}-1)H_{\be}(1)&=& e^{-\be\eps_{21}}H_{\be}(2)+e^{-\be\eps_{31}}H_{\be}(3),\slabel{equ1-transfer}\\
(e^{\cpb}-1)H_{\be}(2)&=& e^{-\be\eps_{12}}H_{\be}(1)+e^{-\be\eps_{32}}H_{\be}(3).\slabel{equ2-transfer}
\end{subeqnarray}

Now, $\disp\frac\cpb\be\to_{\binf}0$, which yields $\disp \lim_{\binf}\frac{e^{\cpb}-1}{\cpb}=1$ and, finally, $\disp \lim_{\binf}\frac1\be\log(e^{\cpb}-1)=-\rho$.

Taking $\disp \frac1\be\log$ and doing $\binf$ in (\ref{equ1-transfer}) and (\ref{equ2-transfer})  we get
\begin{subeqnarray}
 -\rho+V(1)&=& \max(-\eps_{21}V(2),-\eps_{31}V(3))\slabel{equ1-subaclineaire},\\
  -\rho+V(2)&=& \max(-\eps_{12}V(1),-\eps_{32}V(3))\slabel{equ2-subaclineaire},
\end{subeqnarray}
which can be written under the form
\begin{equation}
\label{equ1-subacvector}
-\rho\otimes \left(\begin{array}{c}V(1) \\V(2)\end{array}\right)= \left(\begin{array}{ccc}-\8 & -\eps_{21} & -\eps_{31} \\-\eps_{12} & -\8 & -\eps_{32}\end{array}\right)\otimes \left(\begin{array}{c}V(1) \\V(2) \\V(3)\end{array}\right).
\end{equation}

Now, we use Theorem \ref{theo-subcpeierl} to get an expression of $V(3)$ in terms of $V(1)$ and $V(2)$.
Indeed, we have
$$V(3)=\max(V(1)+h(\1,3),V(2)+h(\2,3)),$$
where $h$ is the Peierl's barrier and 3 means any point starting with 3. Copying the work done to get Equality \eqref{equa-manepeierllocal}, we claim that
$$h(\1,3)=-\eps_{13}\text{ and }h(\2,3)=-\eps_{23}.$$
This yields,
\begin{equation}
\label{equ2-subacvector}
\left(\begin{array}{c}V(1) \\V(2) \\V(3)\end{array}\right)=\left(\begin{array}{cc}0 & -\8 \\-\8 & 0 \\-\eps_{13} & -\eps_{23}\end{array}\right) \otimes\left(\begin{array}{c}V(1) \\V(2)\end{array}\right).
\end{equation}
Merging \eqref{equ1-subacvector} and \eqref{equ2-subacvector}  we finally get
$$
\hskip-0.3cm-\rho\!\otimes\!\begin{pmatrix}V(1) \\ V(2)\end{pmatrix}\!\!=\!
 \begin{pmatrix}-\eps_{13}-\eps_{31} & -\eps_{21}\oplus(-\eps_{31}-\eps_{23}) \\-\eps_{12}\oplus(-\eps_{32}-\eps_{13}) & -\eps_{23}-\eps_{32}\end{pmatrix}\!\otimes\! \begin{pmatrix}V(1) \\V(2)\end{pmatrix}.
$$
\begin{equation}
\label{equ3-subacvector}
\end{equation}
This last equation shows that $-\rho$ is an eigenvalue for the above  matrix. We have seen (see Theorem \ref{theo-eigenmaxplusmatrix}) that such matrix admits an unique eigenvalue (but not necessarily an unique eigenvector). This shows that $\disp\frac1\be\log\cpb$ admits a unique accumulation point as $\binf$, thus converges.

Setting $g(\be):=\cpb.e^{\rho\be}$ we get the proof of the proposition.
\end{proof}
\begin{remark}
\label{rem-g-function}
We point out that the max-plus algebra allows to determine the value for $-\rho$, but the function $g(\be)$ remains unknown. For the convergence or, not, of $\mu_{\be}$,$ \beta \to \infty$,  it is necessary a better understanding of the behavior of $g(\be)$.

$\blacksquare$\end{remark}
Following the reasoning we did before, we get
\begin{equation}
\label{equ-rho}
-\rho= \max
\begin{cases}
 A(1,3)+A(3,1)=-\eps_{13}-\eps_{31},\\
 \\
 A(3,2)+A(2,3)=-\eps_{32}-\eps_{23},\\
 \\
 \disp \frac{A(2,1)+A(1,2)}2=-\frac{\eps_{12}+\eps_{21}}2,\\
 \\
\disp \frac{A(2,1)+A(1,3)+A(3,2)}2=-\frac{\eps_{21}+\eps_{13}+\eps_{32}}2,\\
\\
\disp  \frac{A(1,2)+A(2,3)+A(3,1)}2=-\frac{\eps_{12}+\eps_{23}+\eps_{31}}2,\\
\\
\disp \frac{A(2,3)+A(3,1)+A(1,3)+A(3,2)}2.
\end{cases}
\end{equation}
We emphasize that the last quantity is actually the mean value of the two first ones, and then $-\rho\ge \frac{A(2,3)+A(3,1)+A(1,3)+A(3,2)}2$ always holds, as soon as, $-\rho\ge A(1,3)+A(3,1)$ and $-\rho\ge A(3,2)+A(2,3)$ hold.

We can now finish the proof of the convergence of $\mu_{\be}$. We recall that any accumulation point must be of the form
$$\al. \delta_{\1}+(1-\al)\de_{\2},$$
with $\al\in[0,1]$. It is thus sufficient to show that $\disp \frac{\mu_{\be}([1])}{\mu_{\be}([2])}$ converges as $\binf$ to prove the convergence of $\mu_{\be}$. However, we emphasize that this is very particular to our case (two ergodic $A$-maximizing measures). This reasoning may not work for a more general case. Nevertheless, one of the by-product results of our proof is that for the general case, it seems possible to determine the convergence we get here, in   a similar (but more complex) way. The complexity is an issue which is due,  essentially, to the large amount of possible combinatorics.

\medskip
First, we get some equations for the measure $\nu_{\be}$. We remind that this measure is $\be.A$-conformal. This yields,
\begin{eqnarray*}
\nu_{\be}([1])&=& \nu_{\be}\left(\bigsqcup_{n=1}^{\8}[1^{n}2]\sqcup[1^{n}3]\right)\\
&=& \sum_{n=1}^{+\8}\nu_{\be}([1^{n}2])+\nu_{\be}([1^{n}3])\\
&\hskip-2.5cm=& \hskip-1.5cm\sum_{n=1}^{+\8}e^{\be.S_{n}(A)(1^{n}2)-n\cpb}\nu_{\be}([2])+\sum_{n=1}^{+\8}e^{\be.S_{n}(A)(1^{n}3)-n\cpb}\nu_{\be}([3])\\
&=& \frac{e^{-\be\eps_{12}-\cpb}}{1-e^{-\cpb}}\nu_{\be}([2])+\frac{e^{-\be\eps_{13}-\cpb}}{1-e^{-\cpb}}\nu_{\be}([3]).
\end{eqnarray*}
We remind that $\nu_{\be}([1])+\nu_{\be}([2])+\nu_{\be}([3])=1$, then, we get a linear equation between $\nu_{\be}([1])$ and $\nu_{\be}([2])$. Doing the same work with the cylinder $\disp [2]=\bigsqcup_{n=1}^{+\8}[2^{n}1]\sqcup[2^{n}3]$ we get the following system:
\begin{equation}
\label{equa1-nubelin}
\begin{cases}
(e^{\cpb}-1+e^{-\be.\eps_{13}})\nu_{\be}([1])+(e^{-\be.\eps_{13}}-e^{-\be.\eps_{12}})\nu_{\be}([2])= e^{-\be.\eps_{13}},\\
 (e^{-\be.\eps_{23}}-e^{-\be.\eps_{21}})\nu_{\be}([1])+(e^{\cpb}-1+e^{-\be.\eps_{23}})\nu_{\be}([2])= e^{-\be.\eps_{23}}.
 \end{cases}
\end{equation}

The determinant of the system is
\begin{eqnarray*}
 \Delta(\be):=(e^{\cpb}-1)^{2}+(e^{\cpb}-1)(e^{-\be.\eps_{13}}+e^{-\be.\eps_{23}})+\\
\hskip2cm e^{-\be(\eps_{12}+\eps_{23})}+e^{-\be.(\eps_{21}+\eps_{13})}-e^{\be.(\eps_{12}+\eps_{21})},\hfill
\end{eqnarray*}

and, we get
\begin{equation}
\label{equ1-rapportnube}
\frac{\nu_{\be}([1])}{\nu_{\be}([2])}= \frac{(e^{\cpb}-1)e^{-\be.\eps_{13}}+e^{-\be.(\eps_{12}+\eps_{23})}}{(e^{\cpb}-1)e^{-\be.\eps_{23}}+e^{-\be.(\eps_{21}+\eps_{13})}}.
\end{equation}

On the other hand,  Equations \eqref{equ1-subaclineaire} and  \eqref{equ1-subaclineaire} yield the following formula:

\begin{equation}
\label{equ1-rapporthbe}
\frac{H_{\be}(1)}{H_{\be}(2)}=\frac{(e^{\cpb}-1)e^{-\be.\eps_{31}}+e^{-\be.(\eps_{21}+\eps_{32})}}{(e^{\cpb}-1)e^{-\be.\eps_{32}}+e^{-\be.(\eps_{12}+\eps_{31})}}.
\end{equation}
Therefore, from Equations \eqref{equ1-rapportnube} and \eqref{equ1-rapporthbe} we get
\begin{eqnarray}
\frac{\mu_{\be}([1])}{\mu_{\be}([2])}&=&\frac{H_{\be}(1)}{H_{\be}(2)}\frac{\nu_{\be}([1])}{\nu_{\be}([2])}\nonumber\\
&\hskip-3.5cm=& \hskip-2cm\frac{\disp(e^{\cpb}-1)e^{-\be.\eps_{13}}+e^{-\be.(\eps_{12}+\eps_{23})}}{\disp(e^{\cpb}-1)e^{-\be.\eps_{23}}+e^{-\be.(\eps_{21}+\eps_{13})}}\frac{\disp(e^{\cpb}-1)e^{-\be.\eps_{31}}+e^{-\be.(\eps_{21}+\eps_{32})}}{\disp(e^{\cpb}-1)e^{-\be.\eps_{32}}+e^{-\be.(\eps_{12}+\eps_{31})}}\nonumber\\
&=&
\frac{\left(\disp(e^{\cpb}-1)e^{\be.(\eps_{12}+\eps_{21}-\eps_{13})}+e^{\be.(\eps_{21}-\eps_{23})}\right)}
{\disp\left((e^{\cpb}-1)e^{\be.(\eps_{12}+\eps_{21}-\eps_{23})}+e^{\be.(\eps_{12}-\eps_{13})}\right)}\times\nonumber\\
&&
\hskip 2cm \frac{\disp\left((e^{\cpb}-1)e^{\be.(\eps_{12}+\eps_{21}-\eps_{31})}+e^{\be.(\eps_{12}-\eps_{32})}\right)}
{\disp\left((e^{\cpb}-1)e^{\be.(\eps_{12}+\eps_{21}-\eps_{32})}+e^{\be.(\eps_{21}-\eps_{31})}\right)}.\nonumber\\
&&
\label{equ1-rapportmube}
\end{eqnarray}

Convergence will follow from the next proposition.

\begin{proposition}
\label{prop-funcg}
The function $g$ admits a limit as $\be$ goes to $+\8$.
\end{proposition}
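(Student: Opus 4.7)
The plan is to push the analysis of Proposition \ref{prop-pressure} one step further by deriving an exact algebraic equation satisfied by $u(\beta):=e^{\mathcal{P}(\beta)}-1$, then rescaling by $e^{\rho\beta}$ and passing to the limit to identify a quadratic whose positive root equals $\lim g(\beta)$.

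First, I would complement \eqref{equ1-transfer}--\eqref{equ2-transfer} with the third eigenvalue equation,
\begin{equation*}
\bigl(1+u-e^{-\beta\epsilon_{33}}\bigr)H_{\beta}(3)=e^{-\beta\epsilon_{13}}H_{\beta}(1)+e^{-\beta\epsilon_{23}}H_{\beta}(2).
\end{equation*}
Writing $D:=1+u-e^{-\beta\epsilon_{33}}$, I would solve for $H_{\beta}(3)$, substitute into the first two equations, and impose that the resulting $2\times 2$ homogeneous system for $(H_{\beta}(1),H_{\beta}(2))$ has a nontrivial solution. After expansion and cancellation of one factor $D$ and of the common term $e^{-\beta(\epsilon_{13}+\epsilon_{31}+\epsilon_{23}+\epsilon_{32})}$, this compatibility condition reduces to
\begin{equation*}
D\bigl(u^{2}-e^{-\beta(\epsilon_{12}+\epsilon_{21})}\bigr)=u\bigl(e^{-\beta(\epsilon_{13}+\epsilon_{31})}+e^{-\beta(\epsilon_{23}+\epsilon_{32})}\bigr)+e^{-\beta(\epsilon_{13}+\epsilon_{21}+\epsilon_{32})}+e^{-\beta(\epsilon_{12}+\epsilon_{23}+\epsilon_{31})}.
\end{equation*}

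Next, setting $\tilde u(\beta):=u(\beta)e^{\rho\beta}$ and multiplying through by $e^{2\rho\beta}$, every summand becomes either $\tilde u\,e^{(\rho-\alpha)\beta}$ or $e^{(2\rho-\alpha)\beta}$, where $\alpha$ runs over exactly the five quantities whose max yields $-\rho$ in \eqref{equ-rho}. By the definition of $\rho$, all such exponents are $\leq 0$ and at least one equals $0$; moreover $u\to 0$ and $e^{-\beta\epsilon_{33}}\to 0$ imply $D\to 1$, while the cubic contribution $\tilde u^{3}e^{-\rho\beta}$ is negligible on bounded subsets of $\{\tilde u\geq 0\}$. A direct inspection of the rescaled identity (using $D\geq 1/2$ and $e^{(\cdot)\beta}\leq 1$ for large $\beta$) yields a uniform \emph{a priori} bound $\tilde u(\beta)\leq C$. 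Writing the rescaled identity as $D\tilde u^{2}=f(\beta)\tilde u+h(\beta)+o(1)$, its positive root is given by the quadratic formula
\begin{equation*}
\tilde u(\beta)=\frac{f(\beta)+\sqrt{f(\beta)^{2}+4D(\beta)h(\beta)}}{2D(\beta)}+o(1),
\end{equation*}
which, by continuity of $\sqrt{\cdot}$, converges to $\tilde u_{\infty}:=(a+\sqrt{a^{2}+4b})/2$, with $a,b\geq 0$ the limits of $f$ and $h$ (concretely, $a$ counts how many of the two linear combinations $\epsilon_{13}+\epsilon_{31},\epsilon_{23}+\epsilon_{32}$ realize the max in \eqref{equ-rho}, and $b$ counts how many of the three quadratic combinations do). Since $\rho$ is attained, $a+b\geq 1$, so $\tilde u_{\infty}>0$.

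As $\mathcal{P}(\beta)\sim u(\beta)$ when $\beta\to+\infty$, this gives $g(\beta)=\mathcal{P}(\beta)e^{\rho\beta}\to\tilde u_{\infty}$. The main technical point to handle carefully will be the cubic correction $\tilde u^{3}e^{-\rho\beta}$: one must check that the Perron--Frobenius root of the full cubic in $u$ corresponds, after rescaling, to the $+$ branch of the limiting quadratic formula and not to a spurious root. This is however forced by positivity of $u(\beta)$ together with the observation that the remaining two roots of the cubic in $u$ asymptote to $\approx -1$ (ruled out by positivity) and to the $-$ branch $(a-\sqrt{a^{2}+4b})/2\leq 0$ of the limit quadratic (also ruled out by positivity, except possibly in the degenerate case $b=0$, where the explicit quadratic formula above shows directly that $\tilde u(\beta)\to a$, not $0$).
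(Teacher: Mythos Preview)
Your proof is correct and follows the same overall strategy as the paper: obtain an exact polynomial identity for $u(\beta)=e^{\CP(\be)}-1$, rescale by $e^{\rho\be}$, and read off a limiting quadratic for $g(\be)$. The differences are in the two bookends. For the derivation of the identity, you go directly through the eigenfunction equations, eliminating $H_\be(3)$ and taking the $2\times2$ determinant; this is the characteristic polynomial of the $3\times3$ transfer matrix and is cleaner than the paper's route, which instead passes through the eigenmeasure via $e^{\CP(\be)}=\int\CL_\be(\BBone)\,d\nu_\be$ together with the linear system \eqref{equa1-nubelin} to obtain the same cubic \eqref{equ2-glim}. For the endgame, you invoke the quadratic formula and argue that positivity forces the $+$ branch, which then converges; the paper instead observes that any accumulation point of $g(\be)$ lies in the finite root set of $G^2-\wt bG-\wt c=0$, and that the accumulation set of a continuous function is connected, hence a single point. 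The paper's closing argument sidesteps the branch discussion entirely (in particular the delicate $b=0$ case you flag), while yours has the advantage of identifying the limit explicitly as $(a+\sqrt{a^2+4b})/2$. Both are valid; your derivation is more direct, the paper's conclusion slightly more robust.
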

\begin{proof}
We remind that $\nu_{\be}$ is the eigenmeasure for the dual transfer operator. This yields:
\begin{eqnarray}
e^{\cpb}&=&\int \CL_{\be}(\BBone)\,d\nu_{\be}\nonumber\\
&=&\hskip-0.3cm(1+e^{-\be.\eps_{21}}+e^{-\be.\eps_{31}})\nu_{\be}([1])+(1+e^{-\be.\eps_{12}}+e^{-\be.\eps_{32}})\nu_{\be}([2])\nonumber\\
&&\hskip 1cm+(e^{-\be.\eps_{13}}+e^{-\be.\eps_{23}}+e^{-\be.\eps_{33}})\nu_{\be}([3]).\nonumber\\
&&\label{equ1-expcpb}
\end{eqnarray}
Let us set
$$
\begin{cases}
 X:=e^{\cpb},\\ A:=e^{-\be.\eps_{13}},\\ A':=e^{-\be.\eps_{23}},\\ B:=e^{-\be.(\eps_{12}+\eps_{23})},\\ B':=e^{-\be.(\eps_{21}+\eps_{13})},\\ C:=e^{-\be.(\eps_{12}+\eps_{21})},\\
 a:=e^{-\be.\eps_{21}}+e^{-\be.\eps_{31}}, \\
 b:= e^{-\be.\eps_{12}}+e^{-\be.\eps_{32}},\\
 c:=e^{-\be.\eps_{13}}+e^{-\be.\eps_{23}}+e^{-\be.\eps_{33}}.
\end{cases}
$$
 From  the system \eqref{equa1-nubelin}, we get exact values for $\nu_{\be}([1])$, $\nu_{\be}([2])$ and $\nu_{\be}([3])$. Replacing these values in \eqref{equ1-expcpb}, this yields
\begin{equation}
\label{equ1-glim}
X\!=\!\frac{(1\!+\!a)(A(X\!-\!1)\!+\!B)\!+\!(1\!+\!b)(A'(X\!-\!1)\!+\!B')+c((X\!-\!1)^{2}-\!C)}{(X-1)^{2}+(A+A')(X-1)+B+B'-C}.
\end{equation}
This can also be written
$$X\!-\!1\!=\!\frac{a(A(X\!-\!1)\!+\!B\!)\!+\!b(A'(X\!-\!1)\!+B')\!+\!(c-1)((X\!-\!1)^{2}-C)}{(X-1)^{2}+(A+A')(X-1)+B+B'-C},$$
and this yields
\begin{eqnarray*}
\hskip-3cm(X-1)^{3}+(A+A'+1-c)(X-1)^{2}\\
+(B+B'-C-a.A-b.A')(X-1)
+C(c-1)-a.B-b.B'=0.
\end{eqnarray*}
\begin{equation}
\label{equ2-glim}
\end{equation}
Remember that all the terms $A,A',B,\ldots$ go exponentially fast to 0 as $\be\to+\8$, $X-1$ behaves like $g(\be)e^{-\rho.\be}$, and, moreover $g$ is sub-exponential.
We can thus use Taylor development to replace $X-1$ by $g(\be)e^{-\rho.\be}$, and, keep in each summand of \eqref{equ2-glim} the largest term; larger here means  that we are comparing it to other terms, if there is a sum, but also to terms of the other summands.

It is for instance clear that $(A+A'+1-c)(X-1)^{2}$ has as dominating term $g^{2}(\be)e^{-2\rho.\be}$, whereas $(X-1)^{3}$  has as dominating term $g^{3}(\be)e^{-3\rho.\be}$, which is exponentially smaller than $g^{2}(\be)e^{-2\rho.\be}$.

In the same direction, note that the term $B$ is killed by part of $b.A'$ and similarly $B'$ is killed by part of $a.A$. The term in $X-1$ is actually equal to
\begin{eqnarray*}
&e^{-\be.(\eps_{12}+\eps_{23})}+e^{-\be.(\eps_{21}+\eps_{13})}-e^{-\be(\eps_{12}+\eps_{21})}&\\
&-e^{-\be.(\eps_{21}+\eps_{13})}-e^{-\be(\eps_{13}+\eps_{31})}-e^{-\be.(\eps_{12}+\eps_{23})}-e^{-\be(\eps_{23}+\eps_{32})}&\\
&=-e^{-\be(\eps_{12}+\eps_{21})}-e^{-\be(\eps_{13}+\eps_{31})}-e^{-\be(\eps_{23}+\eps_{32})}.&
\end{eqnarray*}
Note that this term will be multiplied by $g(\be)e^{-\rho.\be}$ and compared to $g^{2}(\be)e^{-2\rho.\be}$. The fact that $\disp\rho\le \frac{\eps_{12}+\eps_{21}}2$ shows that the term $-e^{-\be(\eps_{12}+\eps_{21})}$ can also be forgotten because it will furnish a quantity  exponentially smaller than $g^{2}(\be)e^{-2\rho.\be}$.

The term without $(X-1)$ is (with change of sign)
\begin{eqnarray*}
&e^{-\be.(\eps_{12}+\eps_{21})}\oplus e^{-\be(\eps_{12}+\eps_{23}+\eps_{21})}\oplus e^{-\be(\eps_{12}+\eps_{23}+\eps_{31})}&\\
&\oplus e^{-\be(\eps_{21}+\eps_{12}+\eps_{13})}\oplus e^{-\be(\eps_{21}+\eps_{13}+\eps_{32})}&\\
=&e^{-\be.(\eps_{12}+\eps_{21})}\oplus e^{-\be(\eps_{12}+\eps_{23}+\eps_{31})}\oplus e^{-\be(\eps_{21}+\eps_{12}+\eps_{13})}&
\end{eqnarray*}
from the max-plus formalism.
Comparing all these terms with the term in $g^{2}(\be)e^{-2\rho.\be}$, we actually recover \ref{equ-rho}: comparing the terms in $2\rho$ and the terms in $\rho$, leads to compare $-\rho$ with $-(\eps_{13}+\eps_{31})\oplus(-\eps_{23}-\eps_{32})$, and comparing the term with $2\rho$, with the terms without $\rho$, leads to compare $-2\rho$ with
$-(\eps_{12}+\eps_{21})\oplus (-\eps_{12}-\eps_{23}-\eps_{31})\oplus(-\eps_{21}-\eps_{31}-\eps_{32})$.

Now, considering the dominating term at exponential scale in  \eqref{equ2-glim} yields an equality of the form
\begin{equation}
\label{equ3-glim}
\wt a g^{2}(\be)-\wt b g(\be)-\wt c =\text{ term exponentially small},
\end{equation}
where $\wt a$ is either 0 or 1, $\wt b\in \{0,1,2,3\}$ and $\wt c\in \{0,1,2,3\}$ and not all the coefficients $\wt a$, $\wt b$ and $\wt c$ are zero\footnote{because we exactly consider the dominating exponential scale.}.
As $g$ is positive and all the coefficient are not zero, we get that necessarily $\wt a=1$. Now, considering any accumulation point $G$ for $g(\be)$, as $\be$ goes to $+\8$, we get
\begin{equation}
\label{equ4-glim}
G^{2}-\wt b G-\wt c=0.
\end{equation}
Such  equation admits all its roots in $\R$, and, at least one of them is non-negative.
But, the key point here is that the roots form a finite set, and this set contains the set of accumulation points for $g(\be)$ as $\be\to+\8$. On the other hand, $g$ is a continuous function, thus the set of accumulation points for $g$ is an interval. This shows that it is reduced to a single point, and then $g(\be)$ converges as $\be\to+\8$.
\end{proof}

We remind that $\cpb$ goes to 0 as $\be$ goes to $+\8$, and then $e^{\cpb}-1$ behaves like $g(\be)e^{-\be.\rho}$. We replace this in Equation \eqref{equ1-rapportmube}.
The final expression is thus:
\begin{eqnarray}
\frac{\mu_{\be}([1])}{\mu_{\be}([2])}&=&\frac{\left(\disp g(\be)e^{\be.(\eps_{12}+\eps_{21}-\eps_{13}-\rho)}+e^{\be.(\eps_{21}-\eps_{23})}\right)}
{\disp\left(g(\be)e^{\be.(\eps_{12}+\eps_{21}-\eps_{23}-\rho)}+e^{\be.(\eps_{12}-\eps_{13})}\right)}\nonumber\\
&&
\hskip2cm \times\frac{\disp
\left(g(\be)e^{\be.(\eps_{12}+\eps_{21}-\eps_{31}-\rho)}+e^{\be.(\eps_{12}-\eps_{32})}\right)}
{\disp\left(g(\be)e^{\be.(\eps_{12}+\eps_{21}-\eps_{32}-\rho)}+e^{\be.(\eps_{21}-\eps_{31})}\right)}.\nonumber\\
&&
\label{equ2-rapportmube}
\end{eqnarray}
We know by Proposition \ref{prop-funcg} that $g(\be)$ converges to a nonnegative limit, then Equality \eqref{equ2-rapportmube} has a limit if $\be\to+\8$ (in $[0,+\8]$). If the limit is $0$, this means that $\mu_{\be}$ goes to $\disp\delta_{\2}$, if the limit is $+\8$ this means that $\mu_{\be}$ goes to $\disp\delta_{\1}$, if it is equal to $\al\in]0,+\8[$, then $\mu_{\be}$ goes to $\disp\frac{\al}{\al+1}\delta_{1}+\frac1{\al+1}\delta_{\2}$.

  \backmatter

\end{document}